\documentclass{amsart}

\usepackage{amsfonts,amssymb,verbatim,amsmath,amsthm,latexsym,textcomp,amscd}
\usepackage{latexsym,amsfonts,amssymb,epsfig,verbatim}
\usepackage{amsmath,amsthm,amssymb,latexsym,graphics,textcomp}
\usepackage{graphicx}

\usepackage{url}
\usepackage{psfrag}
\usepackage{amsmath,amsthm,amsfonts,amssymb,mathrsfs,bm,graphicx,stmaryrd}
\usepackage{mathtools}
\usepackage[usenames]{color}
\usepackage{hyperref}
\usepackage [latin1]{inputenc}

\usepackage{tikz}
\usetikzlibrary{shapes,arrows}


\usepackage{cite}
\usepackage{xcolor}
\usepackage{tikz-cd}
\usepackage{float}
\usepackage{bm}
\usepackage{mathtools}
\usepackage{graphicx}
\usepackage{enumerate}
\usepackage{amsfonts,amssymb,verbatim,amsmath,amsthm,latexsym,textcomp,amscd,hyperref}
\usepackage{latexsym,amsfonts,amssymb,epsfig,verbatim}
\usepackage{graphicx}
\usepackage{color}
\usepackage{url}
\usepackage{pgfplots}
\usepackage{tikz}
\usepackage{pdfpages}
\graphicspath{ {./images/} }
\newtheorem{theorem}{Theorem}[section]
\newtheorem*{theorem*}{Theorem}
\newtheorem*{lemma*}{Lemma}
\newtheorem*{proposition*}{Proposition}
\newtheorem*{corollary*}{Corollary}
\newtheorem{lemma}[theorem]{Lemma}
\newtheorem{defn}[theorem]{Definition}
\newtheorem{prop}[theorem]{Proposition}
\newtheorem{cor}[theorem]{Corollary}
\newtheorem{example}[theorem]{Example}
\theoremstyle{definition}
\newtheorem{remark}[theorem]{Remark}

\newtheorem{convention}[theorem]{Convention}
\newtheorem{question}[theorem]{Question}
\newtheorem{criterion}[theorem]{Criterion}
\newtheorem{claim}[theorem]{Claim}

\topmargin=-0.8 cm
\textheight = 23 cm
\textwidth = 16 cm
\setlength{\oddsidemargin}{.3 cm}
\setlength{\evensidemargin}{.3 cm}

\def\ep{\epsilon}

\def\ri{\rightarrow}

\def\sse{\subseteq}
\def\gm{\gamma}
\def\bt{\beta}
\def\al{\alpha}

\def\pa{\partial}
\def\map{\rightarrow}


\newcommand\LMY{\lim^Y_{n \map \infty}}
\newcommand\LMX{\lim^X_{n \map \infty}}

\def\smallskip{\vspace\smallskipamount}
\def\medskip{\vspace\medskipamount}
\def\bigskip{\vspace\bigskipamount}

\def\V{\mathcal{V}}

\def\G{\mathcal{G}}

\def\D{\mathbb {D}}
\def\R{\mathbb {R}}
\def\C{\mathbb {C}}
\def\N{\mathbb {N}}

\def\Z{\mathbb {Z}}




\begin{document}
\title[Landing rays and ray Cannon--Thurston maps]{Landing rays and ray Cannon--Thurston maps}
\author{Rakesh Halder}  

\address{School of Mathematics, Tata Institute of Fundamental Research (TIFR) Mumbai, India}
\email{rhalder.math@gmail.com, rhalder@math.tifr.res.in}

\author{Mahan Mj}
\address{School of Mathematics, Tata Institute of Fundamental Research (TIFR) Mumbai, India}
\email{mahan@math.tifr.res.in}

\author{Pranab Sardar}
\address{Indian Institute of Science Education and Research (IISER) Mohali,Knowledge City,  Sector 81, S.A.S. Nagar 140306, Punjab, India}
\email{psardar@iisermohali.ac.in}

	
	\subjclass[2010]{20F65, 20F67 (Primary) 30C10, 37F10 (Secondary)}
	\keywords{Hyperbolic group, graph of groups, Cannon--Thurston map, commensurated subgroup.}

\begin{abstract}
In this paper, we describe a procedure to construct pairs of hyperbolic groups $H<G$ with the following properties.
\begin{enumerate}
\item  Every geodesic ray $\gamma$ in $H$  converges to a point 
$\xi_{\gamma}\in  \partial G$.
\item  The inclusion of $H$ into $G$ does not extend continuously to 
$\partial H \map \partial G$. In other words, a Cannon--Thurston map
does not exist for this pair of hyperbolic groups.
\end{enumerate}
Jeon, Kapovich, Leininger and Ohshika gave a property of conical limit points in the presence of a Cannon--Thurston map.
We convert this into a criterion for the existence of 
Cannon--Thurston maps and use it to prove 
the non-existence result in (2).  We obtain, in particular, a   geometric proof of  Baker--Riley's counterexample from \cite{baker-riley}.
\end{abstract}	

	\maketitle	

\setcounter{secnumdepth}{3}  
\setcounter{tocdepth}{3}     
	
		
\section{Introduction}\label{introduction} 
The aim of this paper is two-fold. First, we find the hyperbolic groups analog of the notion of landing rays in complex dynamics.
 Second, we find sufficient criteria to generate a family of examples of hyperbolic subgroups $G_1$ of hyperbolic groups $G$ for which
\begin{enumerate}
	\item  All geodesic rays in $G_1$ starting at the identity \emph{land},
	i.e.\ for all $\xi \in \partial G_1$, a  semi-infinite geodesic ray $[1, \xi)$ in a Cayley graph of $G_1$  accumulates on a unique point 
	of $\partial G$.
	\item  Nevertheless, the pair $(G_1,G)$ does not admit a Cannon--Thurston map.
\end{enumerate}
The following question was posed by the second  author in the first version of Bestvina's problem list.

\begin{question}\cite[Question 1.19]{bestvinaprob} (see also  \cite[p. 136]{mitra-trees},\cite[p. 354]{mitra-survey})\\\label{qn-main}
	Let $G_1<G$ be a pair of $(${\em Gromov}$)$ hyperbolic groups. Does the inclusion $i: G_1\ri G$ extend to a continuous map $\pa i: \pa G_1\ri \pa G$?
\end{question}

Such a continuous extension, if it exists, is known as a \emph{Cannon--Thurston}  map \cite{mitra-ct,mitra-trees}
after the pioneering work of Cannon and Thurston in \cite{CT,CTpub}.
If, for a pair  $G_1<G$ of hyperbolic groups, every geodesic ray in $G_1$ lands in the above sense, then we have a well-defined map
$\partial i_r: \partial G_1 \to \pa G$. Such a boundary extension $\partial i_r$ will be referred to as a \emph{ray Cannon--Thurston} map. Note that $\partial i_r$  \emph{need not be continuous}. In fact,
an aim of this paper is to provide examples where $\partial i_r$  exists but is not continuous. However, there are no  examples known to us of pairs $G_1<G$ where a 
ray Cannon--Thurston map does not exist (see Question~\ref{qn-rct} below).

Positive answers to Question~\ref{qn-main} were given in \cite{mitra-ct,mitra-trees}
in some special cases. Since then a number of results on the existence and structure of Cannon--Thurston maps have been proven. See for instance \cite{mitra-endlam,kl15,dkt,JKLO,baker-riley-hydra,ps-kap}
for a sample of results in the context of hyperbolic subgroups of hyperbolic groups. 
 We refer the reader  to  \cite{mahan-icm} for a  survey.  Baker and Riley \cite{baker-riley}  answered Question \ref{qn-main} negatively, by producing, for the first time, a  free subgroup $F$ of a hyperbolic group $G$ for which the inclusion $F \map G$ does \emph{not} admit  a Cannon--Thurston map. This  example was obtained using small cancellation theory starting  from a class of small-cancellation hyperbolic groups discovered by Rips \cite{rips}. Later, Matsuda and Oguni \cite{mats-oguni} used the work of Baker--Riley to show that any non-elementary hyperbolic group can be embedded in another hyperbolic group for which there is no Cannon--Thurston map. These were the only known examples  that answer Question~\ref{qn-main} negatively. In a different non-hyperbolic setup negative answers were obtained in \cite{radhika-etal,charney2024nonexistencecannonthurstonmapsmorse}.

 In this paper, we shall consider
 three primary classes of examples of hyperbolic subgroups $G_1$ of hyperbolic groups $G$. In all of these, the ambient hyperbolic group $G$ is of the form $G= H*_QL$, where
 \begin{enumerate}
 \item $Q$ is a  free group $\mathbb{F}_n$, $n\geq 2$.
 \item $L=Q*_\phi$ is a hyperbolic ascending HNN extension corresponding to a hyperbolic endomorphism $\phi$ of $Q$. (Note that $\mathbb{F}_2$ admits hyperbolic endomorphisms, but no hyperbolic automorphisms. Thus, for $\phi$ a hyperbolic endomorphism $n\geq 2$ suffices. For $\phi$ a hyperbolic automorphism, $n\geq 3$ becomes necessary.)
 \end{enumerate}
 
 Let $t$ denote the stable letter of  $L=Q*_\phi$. In all the three classes mentioned below, $G_1$ will be of the form $G_1 = K*\langle t \rangle$, where $K$ is an appropriately chosen subgroup of $H$.
 
 The pair $(G_1,G)$ in the three principal different classes of examples discussed in this paper thus depends on  three different classes of pairs  $(K,H)$ of a hyperbolic group $H$ and a hyperbolic subgroup $K$
 as three different starting points.
 \begin{enumerate}
 \item $K$ is normal in $H$ with non-elementary quotient $H/K$: in this case, $Q$ is the image of a section of a 
 malnormal quasiconvex free subgroup of $H/K$. This case is dealt with in Subsection~\ref{sec-normal}, particularly Theorem~\ref{extn case}
  (and Remark \ref{normal remk}).
 \item $K$ is commensurated by $H$: in this case  $Q$ is a malnormal quasiconvex free subgroup of $H$ intersecting $K$ trivially.  This case is dealt with in Subsection~\ref{sec-comm}, particularly Theorem~\ref{main thm for comm}.
 \item $K$ is free, and $H$ is a hyperbolic multiple ascending HNN extension
 of $K$, i.e.\ if $K=\langle x_1, \cdots, x_n\rangle$, $n \geq 2$, and $\phi_1, \cdots, \phi_m, m \geq 2$ are hyperbolic endomorphisms of $K$, then   a presentation of $H$ is given by
 $$\langle\{x_1, \cdots, x_n,t_1, t_2, \ldots, t_m\} |  \{t_jx_it^{-1}_j = \phi_j(x)^{-1}: \, 1\leq i\leq n, \,  1\leq j\leq m\}\rangle.$$
In this case,
  $Q < H$ is the free malnormal quasiconvex subgroup generated by the stable letters $\{t_1, t_2, \ldots, t_m\}$. This case is dealt with in Subsection~\ref{sec-endo}, particularly Theorem~\ref{thm-endo}.
 \end{enumerate}
 
 The following theorem combines the above cases into one omnibus statement.
 \begin{theorem}\label{thm-omni}
 Let  $G_1<G$ be a hyperbolic subgroup of a hyperbolic group constructed in one of the three above ways. Then
  \begin{enumerate}
 	\item   $(G_1,G)$ admits a ray Cannon--Thurston map. 
 	\item   $(G_1,G)$ does not admit a Cannon--Thurston map.
 \end{enumerate}
 \end{theorem}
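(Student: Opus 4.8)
Theorem~\ref{thm-omni} is a compilation: the three constructions of the pair $(G_1,G)$ described above are exactly the hypotheses of Theorems~\ref{normal subgp case}, \ref{main thm for comm} and \ref{thm-endo}, proved respectively in Sections~\ref{sec-normal}, \ref{sec-comm} and \ref{subsec-ahnn}. The plan is therefore to prove those three statements; since they share a common architecture, I describe that architecture and indicate where the real work lies. Throughout, $\Gamma_{G_1}$ denotes a Cayley graph of $G_1=K*\langle t\rangle$ with the obvious generating set, and I use that $G=H*_QL$ is hyperbolic (Theorem~\ref{mat-oguni elaborate}) with $Q$ malnormal quasiconvex (hence quasi-isometrically embedded) in both $H$ and $L$.

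For assertion (1), the pair $(K,H)$ admits a Cannon-Thurston map in each of the three cases (in the normal case by \cite{mitra-ct}; in the remaining two by arguments of the same nature, using \cite{mitra-trees} for the ascending HNN structure), and $(H,G)$ admits a CT map by \cite{mitra-trees} because $G=H*_QL$ is a hyperbolic amalgam over the quasi-isometrically embedded free subgroup $Q$; composing, $(K,G)$ admits a CT map, while $t$ is loxodromic in $G$ and $\langle t\rangle$ is quasiconvex there. I would then run a geodesic ray $r$ of $\Gamma_{G_1}$ issuing from $1$ through the Bass--Serre tree $T_{G_1}$ of $G_1=K*\langle t\rangle$, which embeds $G_1$-equivariantly and isometrically in the Bass--Serre tree $T_G$ of $G$ (this uses $G_1\cap H=K$, $G_1\cap L=\langle t\rangle$ and $G_1\cap Q=\{1\}$, the last because $K\cap Q=\{1\}$ and $t^m\notin Q$ for $m\neq 0$). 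Either $r$ eventually stays inside a single vertex space $g\Gamma_K$ or $g\Gamma_{\langle t\rangle}$ — these being convex in $\Gamma_{G_1}$, $r$ is eventually a $G_1$-translate of a ray in $\Gamma_K$ or in $\Gamma_{\langle t\rangle}$ and lands in $\partial G$ by the preceding remarks — or $r$ escapes to an end of $T_{G_1}\subset T_G$, in which case the successive elements $x_m\in G_1$ at which $r$ crosses from one vertex space of $\Gamma_{G_1}$ to the next spell a reduced word $k_1t^{n_1}k_2t^{n_2}\cdots$ in $H*_QL$ (each $k_j\in K\setminus\{1\}\subseteq H\setminus Q$, each $t^{n_j}\in L\setminus Q$); one shows, using quasiconvexity of $Q$ and hyperbolicity of $G$, that $(x_m)$ converges to a single point of $\partial G$ at the corresponding end of $T_G$ and then that the intervening (distorted) portions of $r$ inside each vertex space converge there as well, invoking the CT maps for the conjugates of $(K,G)$ and $(\langle t\rangle,G)$ uniformly. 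Finally, two asymptotic rays fellow-travel in $\Gamma_{G_1}$, hence in $\Gamma_G$ (the inclusion being $1$-Lipschitz), so they land together; this yields a well-defined map $\partial i_r\colon\partial G_1\to\partial G$.

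For assertion (2), I would use the standard criterion that, since every ray already lands, $(G_1,G)$ admits a Cannon-Thurston map if and only if $\partial i_r$ is continuous, equivalently if and only if there is no pair of sequences $x_n,y_n\in G_1$ with $(x_n\mid y_n)_1\to\infty$ in $\Gamma_{G_1}$ while $(x_n\mid y_n)_1$ stays bounded in $\Gamma_G$ (equivalently, points $\xi_n\to\xi$ in $\partial G_1$ with $\partial i_r(\xi_n)\not\to\partial i_r(\xi)$). The plan is to produce such data, in each of the three constructions, by verifying the hypotheses of the non-existence criterion obtained in this paper from the property of Jeon--Kapovich--Leininger--Ohshika \cite{JKLO}. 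The driving mechanism is that $\phi$ is a \emph{hyperbolic} endomorphism of $Q$ and conjugation by $t$ in $G$ realizes $\phi$ on $Q$, so that, exploiting the chosen position of $K$ relative to $Q$ inside $H$ (normality with non-elementary quotient in which $Q$ is a malnormal quasiconvex section; commensuration of $K$ by $H$ with $Q$ malnormal quasiconvex meeting $K$ trivially; the multiple ascending HNN structure with $Q=\langle t_1,\dots,t_m\rangle$), one finds geodesic rays in $\Gamma_{G_1}$ that diverge in $\Gamma_{G_1}$ yet whose images in $\Gamma_G$ are dragged back towards the axis of $t$ (or towards a fixed bounded region), contradicting the convergence criterion. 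Assembling this is the heart of Theorems~\ref{normal subgp case}, \ref{main thm for comm} and \ref{thm-endo}, and recovers in the simplest instance the counter-example at the heart of Baker and Riley's examples.

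The main obstacle is precisely this last step: exhibiting a candidate failing configuration is easy, but proving the quantitative estimate in $\Gamma_G$ — that the relevant Gromov products, or distances to a fixed ball, remain bounded — must be carried out in a setting where $K$ is exponentially distorted in $H$, and hence in $G$, in all three cases, so that one cannot reason with quasigeodesics of $\Gamma_G$ directly; this is exactly where the adapted JKLO property does the essential work, and where the three cases genuinely differ in technical detail. A secondary (but still non-trivial) point lies in assertion (1): upgrading landing of the individual vertex-space portions of a ray to landing of a ray that crosses infinitely many vertex spaces of $\Gamma_{G_1}$, which is therefore not a quasigeodesic of $\Gamma_G$; this requires a combination argument in the spirit of \cite{mitra-trees}.
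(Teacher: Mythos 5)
Your overall architecture matches the paper's: Theorem~\ref{thm-omni} is indeed just the conjunction of Theorems~\ref{normal subgp case}, \ref{main thm for comm} and \ref{thm-endo}; part (1) is proved by splitting geodesic rays of $\Gamma_{G_1}$ according to whether their projection to the Bass--Serre tree is finite or infinite (Corollary~\ref{cor-suffland}); and part (2) is deduced from the JKLO-type criteria. But the two places where you defer the work are exactly where the proof lives, and in both your sketch either omits the key mechanism or proposes a route that does not close as stated. For part (2), the paper proves no quantitative estimate on Gromov products in $\Gamma_G$, and the difficulty is not that $K$ is exponentially distorted. The entire content is the soft identification of landing points: one must produce a sequence $\{y_n\}\subset K$ converging to a point of $\partial K$ with $\lim^G_{n\to\infty} y_n=\lim^G_{n\to\infty} t^n$. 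In the normal case this comes from composing two \emph{surjective} Cannon--Thurston maps: $\partial i_{K,H}\colon\partial K\twoheadrightarrow\partial H$ (normality, \cite{mitra-ct}) hits every point of $\partial Q\subset\partial H$, and $\partial i_{Q,L}\colon\partial Q\twoheadrightarrow\partial L$ hits $t^\infty$; in the commensurated and multiple-HNN cases surjectivity is replaced by limit-set computations of the form $\lim^G_{n\to\infty} g^nhg^{-n}=\lim^G_{n\to\infty} g^n$ (Lemma~\ref{limit g^n}, Lemma~\ref{basic combination}, Lemma~\ref{positive rays0} --- the last being why positivity of the endomorphisms is imposed in Theorem~\ref{thm-endo}). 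Your phrase about rays ``dragged back towards the axis of $t$'' describes the desired conclusion, not an argument; without the surjectivity/limit-set step there is no proof. Once $\{y_n\}$ is found, the JKLO criterion (two distinct points of $\partial G_1$ mapping to the conical limit point $t^\infty$, with $[1,t^\infty)_{G_1}$ a $\Gamma_G$-quasigeodesic) finishes immediately, with no estimates.

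For part (1), your treatment of the $\TT$-infinite rays also has a gap. Knowing that the crossing points $x_m$ converge in $\partial G$ and that each conjugate of $(K,G)$ and $(\langle t\rangle,G)$ is a CT pair does not control $d_G(1,[p,p']_G)$ for $p,p'$ deep inside a distorted vertex-space portion of the ray: the basepoint $1$ does not lie in the relevant coset, and Lemma~\ref{mitra's criterion} only compares geodesics relative to a basepoint in the subspace, so a priori the intermediate portions could return near $1$ in $\Gamma_G$. The paper instead uses that malnormal quasiconvexity of $Q$ in $H$ makes $L$ quasiconvex in $G$, that the weak hulls of the cosets $h_kL$ coarsely separate $\Gamma_G$, and that only finitely many such hulls meet a given ball \cite{GMRS}; hence the entire tail of the ray beyond the $k$-th crossing lies outside $B_N(1)$ (Lemma~\ref{lem-landingrayinfinitesuff}). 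Some such separation input is needed to upgrade convergence of the crossing points to landing of the full ray.
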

 In fact, in the classes of examples given by 
 \begin{enumerate}
 \item normal subgroups (class 1), and
 \item commensurated subgroups  (class 2),
 \end{enumerate}  
  our techniques allow us to identify a set where the ray Cannon--Thurston map of Theorem~\ref{thm-omni} is continuous; see Theorem~\ref{extn case}
  (and Remark \ref{normal remk}) and Theorem \ref{main thm for comm}. 
  
 As mentioned earlier, Baker--Riley \cite{baker-riley} gave the first example of a pair $(G_1,G)$ not admitting a Cannon--Thurston map. 
 We revisit their example and give a new proof.

 \subsection{Techniques and criteria}\label{sec-tech}
 We now come to the techniques that go into proving Theorem~\ref{thm-omni}. \\
 
 \noindent \emph{Criterion for the existence of ray Cannon--Thurston maps:}\\
  The following theorem is the main result of Section \ref{sec-crit-land} which  provides a sufficient condition for the existence of
 ray Cannon--Thurston maps.

\begin{theorem}\label{prop-suffland-intro}

Suppose $H<G$ are hyperbolic groups such that the following hold:
\begin{enumerate}
\item $G=A\ast_C B$ and $H=U\ast_W V$
where $U<A, V<B, U\cap V=W$ and $H$ is an immersed subgraph
of subgroups for the amalgamated free product decomposition of $G$ as per 
\cite{bass-cov}. This amounts to saying that the natural map
$U/W\map A/C$ and $V/W \map B/C$ are both injective.
\item $A, B,C, U, V, W$ are all hyperbolic groups.
\item $C$ is malnormal and quasiconvex in $A$.
\item $W$ is malnormal and quasiconvex in $U$.
\item The inclusion maps $U\map G$ and $V\map G$ admit ray Cannon--Thurston maps.
\end{enumerate}
 
  Then the pair $(H,G)$ admits a ray Cannon--Thurston map.
 \end{theorem}

 \noindent \emph{Criterion for the non-existence of  Cannon--Thurston maps:}\\
 We describe a representative geometric criterion to be used to disprove the existence of Cannon--Thurston maps for a pair $(G_1, G)$
 (see Proposition~\ref{prop-jklo} for a more general statement). Let $i: \Gamma_1\subset \Gamma$ be an inclusion of Cayley graphs (with respect to finite generating sets) corresponding to the inclusion $i:G_1 \to G$
 (thus we are assuming implicitly that the generating set of $G$ includes that of $G_1$).
 
 \begin{criterion}\label{crit}  Let  $G_1 < G$ be a pair of hyperbolic groups as above.
  Let $[1,\xi)_{G_1}\subset \Gamma_1$ and $[1,\eta)_{G_1} \subset \Gamma_1$, with $\xi \neq \eta \in \partial G_1$ be landing rays accumulating at the same point $\zeta \in \partial G$. Further assume that $i([1,\xi)_{G_1}) \subset \Gamma$ is a quasigeodesic in $\Gamma$.
  Then $(G_1,G)$ does not admits a  Cannon--Thurston map.
 \end{criterion}

 A short self-contained deduction of a generalization of Criterion~\ref{crit} occupies Section~\ref{sec-crit}. We refer to Criterion~\ref{crit} and its variants as the JKLO criterion after Jeon, Kapovich, Leininger, and Ohshika, who deduced the key property that leads to this criterion in \cite{JKLO}.

Combining  Theorem~\ref{prop-suffland-intro} and 
the JKLO criterion leads to the main technical theorem of this paper (see Theorem~\ref{main thm}).
 
\begin{theorem}\label{main thm intro}
Let $H$ be a hyperbolic group. Let $F, K$ be two hyperbolic subgroups of $H$.
	Suppose, moreover, that the following hold.
\begin{enumerate}
\item $F$ is malnormal and quasiconvex in $H$.
\item $F\cap K=\{1\}$.

Further, suppose that $\phi:F\to F$ is a hyperbolic endomorphism of $F$. Let $G$ be the HNN extension $G=H\ast_{\phi}$ with stable letter $t$; and let
    $G_1$ be the subgroup of $G$ generated by $K\cup \{t\}$.

\item There is a sequence $\{y_n\}$ in $K$ converging to a point of $\pa K$ such that
		$\lim^G_{n\map\infty} y_n=\lim^G_{n\map\infty} t^n\in\pa G$.
	\end{enumerate}
    
	Then
	\begin{enumerate}
	\item $G_1, G$ are hyperbolic,
	\item $(G_1, G)$ is not a Cannon--Thurston pair.
	\item If a ray Cannon--Thurston map exists for the pair $(K,H)$ then a ray Cannon--Thurston map exists for the pair $(G_1,G)$.
    
\item The ray Cannon--Thurston map $\pa G_1\to\pa G$ $($ in $(3))$ is not continuous at the points in the $G_1$-orbits of $\lim^{G_1}_{n\to\infty}t^{-\infty}$ in $\pa G_1$.
    
\item If, moreover, $(K,H)$ is a Cannon--Thurston pair, then a ray Cannon--Thurston map 
for the pair $(G_1, G)$ is continuous on the complement of the $G_1$-orbits in $\partial G_1$ of the points $\lim^{G_1}_{n\map \infty}t^{\pm n}$.

\end{enumerate} 
\end{theorem}

Theorem~\ref{main thm intro} is deduced from 
 Theorem~\ref{prop-suffland-intro} and 
the JKLO criterion in Section~\ref{sec-main thm}.
Theorem~\ref{main thm intro} is then used to deduce the various specific cases occurring in Theorem~\ref{thm-omni}
(along with others) in Section~\ref{sec-applns}. A curious  complementarity of observed phenomena in the complex dynamics world versus the hyperbolic groups world is noted in Section~\ref{sec-cxdyn}.\\

\noindent {\bf Acknowledgments:} The authors are grateful to Xavier Buff,
Misha Lyubich, Curt McMullen, and Dennis Sullivan for useful correspondence pertaining to the state of the art in holomorphic dynamics.
The authors also thank Sabyasachi Mukherjee for helpful discussions on the same theme. Much of Section~\ref{sec-cxdyn} is based on these inputs. We also thank Mladen Bestvina for helpful correspondence. We thank  Harish Seshadri  for valuable inputs on a previous draft. Finally, MM would like to acknowledge that a key idea that inspired this project appeared first in RH's thesis done under the guidance of PS.

\section{Preliminaries}\label{prelim}
We refer the reader to \cite{gromov-hypgps}, \cite[Chapter III.H]{bridson-haefliger} and \cite{GhH} for background on hyperbolic groups. In this section we recall various notions relevant to the paper and fix  notation. 

Let $X$ be a metric space. $X$ is said to be {\em proper} if
any closed and bounded subset of $X$ is compact. An isometric action
of a (discrete) group $G$ on a metric space $X$ is called a {\em 
(metrically) proper action} if for all bounded set $A\subset X$,
the set $\{g\in G: A\cap gA \neq \emptyset\}$ is finite.

If $X$ is a metric space, $A\subset X$ and $x\in X$ then we define
$d(x,A)=\inf \{d(x,y):y\in A\}$; and for all $K\geq 0$ we define 
$N_K(A)=\{y\in X: d(y,A)\leq K\} $. We let $Hd(A,B)$ denote the {\em Hausdorff distance} between any two subsets $A, B$ of $X$ where 
$$Hd(A,B)=\inf\{ k: A\subset N_k(B), \, B\subset N_k(A)\}.$$

A {\em geodesic} in a metric space $X$ is an isometric embedding
$\alpha:I \map X$ where $I$ is an interval in $\R$, typically
a closed interval. If $I=[a,b]$ and $\alpha(a)=x, \alpha(b)=y$
then we say that $\alpha$ is a geodesic joining $x,y$. 
A geodesic segment joining $x$ and $y$ will often be denoted by $[x,y]_X$ or by $[x,y]$ when $X$ is understood. 

A metric space $X$ is called a {\em geodesic metric space} if any
pair of points in $X$ can be joined by a geodesic. All the metric spaces
that we work with in this paper are geodesic metric spaces. On a
subspace $Y$ of a geodesic metric space $X$ we always put the
{\em induced length metric} (see \cite[Chapter I.3]{bridson-haefliger})
from $X$.

Let $k\ge1$ and $\ep\ge0$. A map $f:X\map Y$ between two metric spaces is said to be a $(k,\ep)$-{\em quasi-isometric embedding $($or a $(k,\epsilon)$-qi embedding$)$} if for all $x,x'\in X$, we have $$\frac{1}{k}d_X(x,x')-\ep\le d_Y(f(x),f(x'))\le kd_X(x,x')+\ep.$$
By a $k$-{\em qi embedding}, we mean a $(k,k)$-qi embedding. We say $f$  is a qi embedding if $f$ is a $(k,\epsilon)$-qi embedding for some $k\geq 1$ and $\epsilon \geq 0$. 
A map $\al:I\sse\R\map X$ is said to be a $(k,\epsilon)$-quasigeodesic if $I$ is an interval in $\R$ and $\al $ is a $(k,\epsilon)$-qi embedding. 
A $(k,k)$-quasigeodesic is called a $k$-quasigeodesic.
When working with a quasigeodesic $\al:I\sse\R\map X$, we often conflate the map $\alpha$ and its image, and think of the quasigeodesic as a subset of $X$. A quasigeodesic
$\alpha:I\map X$ is called a {\em quasigeodesic ray} if $I$ is an interval of the form $[a, \infty)$ or $(-\infty, b]$. When $I=\R$ then $\alpha$ is called {\em a quasigeodesic line}. We note that $(1,0)$-quasigeodesics are simply
geodesics.

For a graph $\mathcal G$, we shall denote the vertex set and the edge set of
$\mathcal G$ by $\V(\mathcal G)$ and $\mathcal E(\mathcal G)$ respectively.

\subsection{Hyperbolic groups and related notions}\label{sec-ct}

In this paper, all the hyperbolic metric spaces that we work with are either trees or proper geodesic metric spaces. Also we adopt Rips' definition of hyperbolicity using slimness of geodesic triangles. 

\subsubsection{Gromov boundary}
The (Gromov) boundary of a hyperbolic space $X$, consisting of equivalence classes of asymptotic geodesic rays in $X$, will be denoted by $\partial X$.

Let $X\cup \partial X=\overline{X}$ denote the Gromov compactification. A geodesic ray $\gamma$ starting at $x \in X$ and accumulating at $\xi\in \partial X$
is denoted by $[x, \xi)$ and is said to join $x$ to $\xi$. 
In this case, we write $\gamma(\infty)=\xi$. Similarly,  a
geodesic line $\alpha$ accumulating at $\xi^{-}, \xi^{+}\in \partial X$ is denoted as
$(\xi^-,\xi^+ )$ and is said to join $\xi^{-}$ and $\xi^{+}$.

Existence of geodesic rays or  geodesic lines are guaranteed by the following.

\begin{lemma}\label{basic bdry lemma}\cite[p. 427-428]{bridson-haefliger}
	Suppose $X$ is either a proper hyperbolic space or a tree.
	Then for any $\xi \in \partial X$ and $p\in X\cup \partial X$, there is
	a geodesic ray or a geodesic line, according as $p\in X$ or $p\in \pa X$, joining $\xi$ and $p$.
	
	Moreover, if $X$ is $\delta$-hyperbolic and if
	 $\alpha$, $\beta$ are geodesic rays or geodesic lines joining the same pair of points of 	$X\cup \partial X$,  then $Hd(\alpha, \beta)\leq 2\delta$.
\end{lemma}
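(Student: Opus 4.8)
The plan is to prove the two assertions separately, reducing the ideal cases to the standard thin-triangle facts for $\delta$-hyperbolic spaces by a limiting argument, and treating proper spaces and trees in parallel.

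\emph{Existence.} Discarding the degenerate case $p=\xi$, there are two cases. If $p\in X$, choose $x_n\in X$ with $x_n\map\xi$; then the Gromov products $(x_m\mid x_n)_p$ tend to $\infty$. When $X$ is proper, the initial portions of the geodesics $[p,x_n]$ lie in a fixed ball of each radius about $p$, so Arzel\`a--Ascoli gives a subsequence converging uniformly on compacta to a geodesic ray $\alpha$ from $p$, and $\alpha$ accumulates at $\xi$ since $(\alpha(t)\mid x_n)_p\map\infty$. When $X$ is a tree (possibly not locally finite, so Arzel\`a--Ascoli is unavailable), the segments $[p,x_m]$ and $[p,x_n]$ coincide on an initial subsegment of length $(x_m\mid x_n)_p$, hence stabilise on every bounded initial interval; their increasing union is the required ray. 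If $p\in\partial X$ with $p\neq\xi$, pick $x_n\map\xi$, $y_n\map p$ and a basepoint $w$; since $p\neq\xi$, no subsequence of $(x_n\mid y_n)_w$ tends to $\infty$, so the nearest-point projections $c_n$ of $w$ onto $[y_n,x_n]$ stay in a fixed ball. Reparametrising $[y_n,x_n]$ so that $c_n$ sits at parameter $0$, the same compactness (resp.\ stabilisation) argument produces a bi-infinite geodesic accumulating at $\xi$ and at $p$ at its two ends.

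\emph{Uniqueness up to $2\delta$.} Let $\alpha,\beta$ be two geodesics, rays or bi-infinite, joining the same pair of points of $X\cup\partial X$, and fix $\alpha(t_0)$ on $\alpha$; it suffices to bound $d(\alpha(t_0),\beta)$. March points out to infinity along $\alpha$ and along $\beta$ toward the common ideal endpoint(s) and join corresponding approximants by geodesic segments, obtaining a geodesic triangle (ray case: the two rays share the finite vertex $p$) or a geodesic quadrilateral (bi-infinite case); for approximants far enough out, the side approximating $\alpha$ contains $\alpha(t_0)$. By $\delta$-slimness of triangles (resp.\ $2\delta$-slimness of quadrilaterals) $\alpha(t_0)$ lies within $2\delta$ of one of the remaining sides. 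But any side joining two approximants that converge to the \emph{same} boundary point recedes to infinity, because the Gromov product at $\alpha(t_0)$ of those two approximants tends to $\infty$. Hence, for approximants sufficiently far out, $\alpha(t_0)$ is within $2\delta$ of the side approximating $\beta$, so $d(\alpha(t_0),\beta)\le 2\delta$; by symmetry $Hd(\alpha,\beta)\le 2\delta$.

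\emph{Main difficulty.} The one genuinely delicate point is the bi-infinite existence case: one must prevent the limiting geodesic from escaping to infinity (degenerating to a ray, or the limit being empty), which is exactly what boundedness of $(x_n\mid y_n)_w$---equivalently the hypothesis $p\neq\xi$---ensures, while also forcing the correct reparametrisation. For trees the substitute for Arzel\`a--Ascoli is uniqueness of geodesics together with their stabilisation on bounded subsegments, after which the tree case is if anything easier. The rest is routine bookkeeping with thin-triangle estimates; the sharp constant ($\delta$ in the ray case, $2\delta$ in the bi-infinite case) depends only on the hyperbolicity convention, and the stated bound $2\delta$ covers both.
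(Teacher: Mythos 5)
The paper gives no proof of this lemma: it is quoted directly from Bridson--Haefliger \cite[p.~427--428]{bridson-haefliger}, so there is nothing internal to compare against. Your argument is correct and is essentially the standard one from that reference (Arzel\`a--Ascoli, resp.\ stabilisation of initial segments in the tree case, for existence; $\delta$-slim triangles and $2\delta$-slim quadrilaterals, with the sides joining approximants of a common ideal point receding to infinity, for the Hausdorff-distance bound), so nothing further is needed.
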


We recall the following basic property of the topology 
on $\bar X$ that is relevant.

\begin{lemma}\label{limit1}
	Suppose $\{x_n\}$ is a sequence in $X$ and $\xi\in \partial X$. Then
	$x_n\map \xi$ if and only if $d(x, [x_n,\xi))\map \infty$.
\end{lemma}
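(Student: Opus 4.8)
The plan is to identify the quantity $d(x,[x_n,\xi))$ with the Gromov product $(x_n\mid\xi)_x$ up to a bounded additive error, and then to invoke the standard description of the topology on $\bar X$, by which $x_n\to\xi$ is equivalent to $(x_n\mid\xi)_x\to\infty$. Here $x$ denotes the fixed basepoint implicit in the displayed formula. Note first that by Lemma~\ref{basic bdry lemma} any two geodesic rays joining $x_n$ to $\xi$ lie within Hausdorff distance $2\delta$, so $d(x,[x_n,\xi))$ is well defined up to an additive constant, which is all that matters for a statement about divergence to $\infty$. When $X$ is a (Bass--Serre) tree, so $\delta=0$, the ray $[x_n,\xi)$ is unique and every estimate below is an exact equality, so that case is immediate; hence I would otherwise assume $X$ proper and $\delta$-hyperbolic.

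The one step with genuine geometric content records a coarse identity between Gromov products and distances to geodesics. Recall that for any three points $a,b,c$ of a $\delta$-hyperbolic space one has $\bigl|(a\mid b)_c-d(c,[a,b])\bigr|\le 4\delta$, see \cite{bridson-haefliger,GhH}. Passing to the boundary, set $(a\mid\xi)_c:=\sup\liminf_i(a\mid b_i)_c$ over sequences $b_i\to\xi$, all such liminfs agreeing up to $2\delta$ by hyperbolicity. Taking $b_i=r(i)$ along a ray $r=[a,\xi)$ and using that the finite geodesics $[a,r(i)]$ fellow-travel $r$ uniformly on the ball of radius $d(a,r(i))-\delta'$ about $a$ (a consequence of thinness of the ideal triangle on $a,r(i),\xi$, itself obtained by exhausting ideal triangles with finite ones), one gets $d(c,[a,r(i)])=d(c,r)\pm\delta'$ once $r(i)$ lies past the nearest-point projection of $c$ to $r$; hence
\[
\bigl|(a\mid\xi)_c-d(c,[a,\xi))\bigr|\le C(\delta).
\]

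With this in hand the lemma is immediate: by the standard description of the topology on $\bar X$ (\cite{bridson-haefliger,GhH}), $x_n\to\xi$ if and only if $(x_n\mid\xi)_x\to\infty$, and by the displayed estimate this holds if and only if $d(x,[x_n,\xi))\to\infty$. If one prefers to avoid quoting that description, both directions can be done by hand from the same ingredients: for ``$\Leftarrow$'', the inequality $d(x,x_n)\ge d(x,[x_n,\xi))$ forces $\{x_n\}$ out of every bounded set, so by properness any subsequential limit lies in $\partial X$; if $x_{n_k}\to\zeta\in\partial X$ then the rays $[x_{n_k},\xi)$ stay uniformly far from $x$, an Arzela--Ascoli limit of a subsequence is a bi-infinite geodesic one of whose endpoints is $\xi$, and a short Gromov-product computation identifies its other endpoint with $\zeta$, forcing $\zeta=\xi$ since the two endpoints of a bi-infinite geodesic are distinct in $\partial X$; the direction ``$\Rightarrow$'' is symmetric. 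I expect no serious obstacle: the whole content is the coarse identity $(a\mid\xi)_c\approx d(c,[a,\xi))$, and the only care needed is that Gromov products with an ideal endpoint are defined only up to $2\delta$ and require a choice of approximating sequence, together with the uniform thinness of ideal triangles; the tree case is flagged separately only because properness (hence Arzela--Ascoli) is unavailable there, but $0$-hyperbolicity makes everything exact.
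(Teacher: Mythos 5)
The paper states this lemma without proof, introducing it only as ``the following basic property of the topology on $\bar X$,'' so there is no argument in the paper to compare against. Your main proof --- identifying $d(x,[x_n,\xi))$ with the Gromov product $(x_n\mid\xi)_x$ up to an additive constant depending on $\delta$, and then invoking the standard characterization that $x_n\to\xi$ in $\bar X$ if and only if $(x_n\mid\xi)_x\to\infty$ --- is the standard route and is correct; the coarse identity $\bigl|(a\mid\xi)_c-d(c,[a,\xi))\bigr|\le C(\delta)$ follows as you say from the finite-point identity together with uniform thinness of ideal triangles, and the tree case is exact.

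One small caveat about your optional ``by hand'' paragraph for the $\Leftarrow$ direction: as written it does not quite parse. Under the hypothesis $d(x,[x_n,\xi))\to\infty$ the rays $[x_{n_k},\xi)$ leave every compact set, so they admit no locally uniform (Arzela--Ascoli) limit; and the concluding step ``one endpoint is $\xi$, the other is $\zeta$, hence $\zeta=\xi$'' is not a valid inference. The clean version of that direction is the contrapositive: if a subsequence $x_{n_k}$ converges to some $\zeta\in\partial X\setminus\{\xi\}$, then by stability of (quasi)geodesics the rays $[x_{n_k},\xi)$ eventually fellow-travel a fixed bi-infinite geodesic $(\zeta,\xi)$, so $d(x,[x_{n_k},\xi))$ remains bounded; if $x_{n_k}$ has a bounded subsequence the same conclusion is immediate. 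Since that paragraph is explicitly an alternative to your main argument, this does not affect the correctness of the proof.
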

An important remark on notation is in order.
\begin{remark}
If $x_n\map \xi$ as in the lemma above then we write
$\LMX x_n=\xi$. 

When the point $\xi$ is not important then we simply write
$\LMX x_n  \in \pa X$ to indicate that $\{x_n\}$ is an unbounded
sequence of points in  $X$ which converges to a point of $\pa X$.

This convention is useful specially when one deals with a
pair of spaces $Y\subset X$ and a sequence $\{y_n\}$ in $Y$, and
one has to consider the limits of $\{y_n\}$ in both $\overline{X}$ and $\overline{Y}$. In such cases, we shall use the notation $\LMX y_n$ and $\LMY y_n$ to indicate where the limit is taken.

For a hyperbolic group $G$ and a sequence $\{g_n\}$ in $G$, the
notation $\lim^G_{n\map \infty} g_n\in \pa G$ or 
$\lim^G_{n\map \infty} g_n=\xi\in \pa G$ will mean the
limit is taken in a Cayley graph of $G$.
\end{remark}

We recall that the notation $Comm_G(H)$ for groups $H<G$ stands for the commensurator of $H$ in $G$:
$$Comm_G(H)=\{g\in G: H\cap gHg^{-1} \,\mbox{has finite index in both}\, H\, \mbox{and}\, gHg^{-1}\}.$$
We conclude this subsection with the following lemma, whose proof is straightforward.
\begin{lemma}\label{limit g^n}
	Suppose $G$ is a hyperbolic group and $\gamma$ is a quasigeodesic ray
	in (a Cayley graph of) $G$ starting at 1. Let $g_n=\gamma(n)$ for all $n\in \mathbb N$ and
	$\xi=\gamma(\infty)\in \pa G$. Suppose $h \in G$ is such that
	$d_G(1,g_nhg^{-1}_n)\map \infty$ as $n\map \infty$. Then
	$\lim^G_{n \map \infty}  g_nhg^{-1}_n  = \xi$.

	In particular, if $g, h\in G$ are infinite order elements and
	$h\not\in Comm_G(\langle g\rangle)$ then $\lim^G_{n\map \infty} g^nhg^{-n}=\lim^G_{n\map \infty} g^n$.
\end{lemma}

\subsubsection{Limit points and limit sets}\label{defn-ltset}
Suppose we have a hyperbolic metric space $X$ and a subset $Y\sse X$.
Then $\xi\in \partial X$ is called a {\em limit point} of $Y$ (in $\partial X$) if there is a sequence
$\{y_n\}$ in $Y$ such that  $\LMX y_n=\xi$. When a group $H$ acts by isometries on a hyperbolic metric space $X$, a limit point of $H$ is a limit
point of some (any) $H$-orbit in $X$.

The term \emph{limit set} refers to the set of all limit points.
The notation $\Lambda_X(Y)$ is used to denote the limit set of a
subset $Y$ of a hyperbolic space $X$.  For an action of a group $H$ by isometries on a hyperbolic $X$, the \emph{limit set}  of $H$ in $\pa X$, 
denoted by $\Lambda_X (H)$, is $\Lambda_X(H.x)$ for some (any) $x\in X$.
	
	A limit point $\xi\in\Lambda_X(Y) \sse \pa X$ of $Y$ is said to be a {\em conical limit point} of $Y$ if for some $($any$)$ (quasi)geodesic ray $\al$ in $X$ with $\al(\infty)=\xi$ there is $R\ge0$ such that $diam\big( Y\cap N_R(\al)\big)$ is infinite. For an action of a group $H$ by isometries on $X$,
	a conical limit point of $H$ is a conical limit point of an $H$-orbit.

For a hyperbolic group $G$ and a subgroup $H$ of $G$, the limit set of $H$ in $\pa G$ is defined to be the limit
set of $H$ for its action on a Cayley graph of $G$ and it is denoted by $\Lambda_G(H)$ or simply $\Lambda(H)$
when $G$ is understood.

\subsection{Cannon--Thurston maps, and ray Cannon--Thurston maps}\label{sec-lam} 
In this paper we introduce the notion of a \emph{ray Cannon--Thurston map} which we describe below.
However, let us first recall the well-known notion of a Cannon--Thurston map. Although this can be defined
for more general maps between hyperbolic spaces, the following is the most common.

\begin{defn}\label{CT-map}
\begin{enumerate}
\item
\cite{mitra-trees}
Suppose $Y\subset X$ are (proper) hyperbolic metric spaces where $Y$ is given the induced length metric from $X$. 
Let $i:Y \map X$ denote the inclusion map. A Cannon--Thurston map for this pair
is a map $\partial i : \partial Y \to \partial X$ with the following property:

For each $\xi\in\pa Y$ and for any sequence $\{y_n\}\sse Y$ with $\LMY y_n=\xi$ we have  $\LMX i(y_n)= \pa i(\xi)$. 

When a Cannon--Thurston map exists for a pair of spaces $Y\subset X$, we say that the pair $(Y,X)$ admits a \emph{Cannon--Thurston map}.

\item	 Let $H<G$ be hyperbolic groups.
 We say that the pair $(H,G)$ admits a \emph{Cannon--Thurston map} 
 $\pa i:\pa H \map \pa G$ if for some (any) inclusion
 of Cayley graphs $\Gamma_H \map \Gamma_G$, defined with respect to finite generating sets of $H$  and $G$, a Cannon--Thurston map exists.
\end{enumerate}	
\end{defn}
If a Cannon--Thurston map exists for a pair of spaces $Y\subset X$ then we say that $(Y,X)$  is a \emph{Cannon--Thurston pair}. If a Cannon--Thurston
map exists for an inclusion of hyperbolic groups $H<G$ then we say that
$(H,G)$ is a Cannon--Thurston pair.	
\begin{defn}
Suppose $Y\subset X$ are hyperbolic metric spaces and $i:Y\map X$
is the inclusion map.
\begin{enumerate}
\item \label{defn-landingray} 
A (quasi) geodesic ray
$\gamma$ in $Y$ is called a {\em landing ray} for the pair $(Y,X)$ if the
pair of metric spaces $(\gamma, X)$ is a Cannon--Thurston pair, or equivalently if
$\LMX \gamma(n)\in \pa X$.

\item If all (quasi)geodesics rays of $Y$ are landing rays for the pair
$(Y,X)$ then we say that a {\em ray Cannon--Thurston} map exists for the pair
$(Y,X)$.

\item If a ray Cannon--Thurston map exists for any (some) inclusion of
Cayley graphs of $H<G$ with respect to finite generating sets, then we
say that $(H,G)$ admits a \emph{ray Cannon--Thurston map}.
\end{enumerate}

\end{defn}
If a ray Cannon--Thurston map exists for a pair of hyperbolic 
spaces $Y\subset X$ or a pair of hyperbolic groups $H<G$ then we say that
this is a {\em ray Cannon--Thurston pair}.

\begin{remark}
(1) Existence of a Cannon--Thurston map implies the existence of a ray Cannon-
Thurston map; but the converse is false in general, as the  main theorem of this paper shows.

(2) This is about notation. We denote inclusion maps
$Y\map X$ by $i:Y\map X$ or $i_{Y,X}:Y\map X$. If $(Y,X)$ is a Cannon--Thurston pair then the corresponding Cannon--Thurston map will be denoted by
$\pa i:\pa X\map \pa Y$, or by $\pa i_{Y,X}:\pa Y \map \pa X$.

Whenever $(Y,X)$ is a ray Cannon--Thurston pair, then the corresponding ray Cannon--Thurston map
will be denoted by $\pa i_{r,Y,X}: \pa Y \map \pa X$ or simply by $\pa i_r:\pa Y\to\pa X$ when $Y$ and $X$ are understood.

Similarly for hyperbolic groups $H<G$, 
$\pa i_{H,G}:\pa H\map \pa G$ or $\pa i_r :\pa H \map \pa G$ will
bear similar meanings.

(3) If a Cannon--Thurston map exists for a pair of hyperbolic spaces
$Y\subset X$ (or a pair of hyperbolic groups $H<G$) then it follows 
from Lemma \ref{ray ct cont criteria} 
below that the Cannon--Thurston map is continuous. However, this is
false for ray Cannon--Thurston maps as the results of Section \ref{sec-main thm} of this article
will show.
\end{remark}

\begin{convention}
    If $Y\subset X$ are hyperbolic spaces such that a ray  Cannon--Thurston map
$\pa i_r:\pa Y\map \pa X$ exists but a Cannon--Thurston map does not exist for the
pair $(Y,X)$ then we will say that the pair $(Y,X)$ admits {\bf only a ray Cannon--Thurston map}.
A similar convention will be used for a pair of hyperbolic groups $H<G$.
\end{convention}

The following lemma gives a necessary and sufficient condition
for the existence of Cannon--Thurston maps. 
\begin{lemma}\label{mitra's criterion} \cite{mitra-trees}
Suppose $f:Y\map X$ is a map between two proper hyperbolic metric spaces.
Let $y_0\in Y$  be some base-point. Then $f$ admits a Cannon--Thurston map if and only if
 there is a function $\eta:\N \map\N$ (where $\eta(r)\map\infty$
as $r\map\infty$) such that the following holds:

For any $y,y'\in Y$, and any geodesic $\beta$ in $Y$ joining them and any geodesic $\alpha$ in $X$
joining $f(y),f(y')$, one has $$d_X(f(y_0),\al)\le r\, \mbox{ implies }\, d_Y(y_0,\bt)\le \eta(r).\hspace{1.5 cm} (*)$$
\end{lemma}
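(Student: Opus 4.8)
\textbf{Proof proposal for Lemma~\ref{mitra's criterion}.}

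The plan is to prove both implications directly from the definition of the Gromov boundary in terms of convergent sequences, using the standard fact (recorded in the lemma preceding Lemma~\ref{basic bdry lemma} in the excerpt) that $x_n \to \xi \in \partial X$ iff the Gromov product $(x_n \mid x_m)_{y_0} \to \infty$, together with the thin-triangle characterization: a geodesic $\alpha$ joining $f(y), f(y')$ passes within a bounded distance of $y_0$ iff the Gromov product $(f(y)\mid f(y'))_{y_0}$ is bounded. So the whole statement should be rephrased as: $f$ extends continuously to the boundary iff there is a proper function $\eta$ such that $(f(y)\mid f(y'))_{f(y_0)} \le r$ implies $(y\mid y')_{y_0} \le \eta(r)$, i.e. iff whenever $(y_n\mid y_n')_{y_0} \to \infty$ we also have $(f(y_n)\mid f(y_n'))_{f(y_0)} \to \infty$.

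First I would prove the ``if'' direction. Assume such an $\eta$ exists. Given $\xi \in \partial Y$, pick any sequence $y_n \to \xi$ in $Y$; we must show $\{f(y_n)\}$ converges in $\bar X$ and that the limit depends only on $\xi$. For convergence: since $y_n \to \xi$, the Gromov products $(y_n \mid y_m)_{y_0} \to \infty$ as $n, m \to \infty$; applying the contrapositive of the $\eta$-condition (if $(f(y_n)\mid f(y_m))_{f(y_0)} \le r$ then $(y_n\mid y_m)_{y_0} \le \eta(r)$, so $(y_n\mid y_m)_{y_0}$ bounded forces $(f(y_n)\mid f(y_m))_{f(y_0)}$ bounded) gives that $(f(y_n)\mid f(y_m))_{f(y_0)} \to \infty$, hence $\{f(y_n)\}$ is Cauchy in the boundary sense and converges to some $\partial f(\xi) \in \partial X$ (note $\{f(y_n)\}$ cannot stay bounded in $X$, since that would bound $(f(y_n)\mid f(y_m))_{f(y_0)}$ only if distances stayed bounded — one checks $d_X(f(y_0), f(y_n)) \ge \tfrac1k d_Y(y_0,y_n) - k \to \infty$, using that $f$ is coarsely Lipschitz, which follows from properness plus the metric being a geodesic one, or one simply notes $f$ restricted to a geodesic ray is a quasigeodesic by the $\eta$-condition applied along the ray). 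Independence of the chosen sequence: if $y_n \to \xi$ and $y_n' \to \xi$, interleave them; the interleaved sequence also converges to $\xi$, so by the above its $f$-image converges, forcing $\lim f(y_n) = \lim f(y_n')$. Continuity of $\partial f$ at $\xi$, and continuity of $f \cup \partial f$ on $\bar Y$, then follows by a standard diagonal argument: if $\xi_j \to \xi$ in $\partial Y$, choose $y^{(j)} \to \xi_j$ with $d_{\bar Y}$-error shrinking, deduce $y^{(j)} \to \xi$, hence $f(y^{(j)}) \to \partial f(\xi)$, and estimate $d_{\bar X}(\partial f(\xi_j), \partial f(\xi))$ accordingly. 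This gives a continuous extension $\partial f$, i.e. a CT map.

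Next I would prove the ``only if'' direction by contraposition. Suppose no such $\eta$ exists. Then there is $r_0 \ge 0$ and sequences $y_n, y_n' \in Y$ with $d_X(f(y_0), \alpha_n) \le r_0$ for geodesics $\alpha_n$ joining $f(y_n), f(y_n')$, while $d_Y(y_0, \beta_n) \to \infty$ for geodesics $\beta_n$ joining $y_n, y_n'$. Passing to the boundary language: $(f(y_n)\mid f(y_n'))_{f(y_0)}$ stays bounded while $(y_n \mid y_n')_{y_0} \to \infty$. After passing to a subsequence, $y_n \to \xi$ and $y_n' \to \xi'$ in $\bar Y$ (by compactness of $\bar Y$); since the Gromov product blows up, either both converge to a common boundary point $\xi = \xi' \in \partial Y$, or one must rule out the degenerate cases where a subsequence stays bounded in $Y$ (which would keep $(y_n\mid y_n')_{y_0}$ bounded, contradiction). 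So $y_n, y_n' \to \xi \in \partial Y$. If a CT map $\partial f$ existed, continuity would force $f(y_n) \to \partial f(\xi)$ and $f(y_n') \to \partial f(\xi)$ in $\bar X$, hence $(f(y_n)\mid f(y_n'))_{f(y_0)} \to \infty$ — contradicting boundedness. Therefore no CT map exists, completing the contrapositive.

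The main obstacle I anticipate is bookkeeping the translation between the three equivalent formulations — (i) geodesics passing near basepoints, (ii) Gromov products, (iii) convergence of sequences to boundary points — uniformly in the hyperbolicity constant $\delta$, and in particular handling the ``degenerate'' cases where a sequence $f(y_n)$ might a priori fail to escape to infinity in $X$. The cleanest fix for the latter is to observe at the outset that the hypothesis on $\eta$ forces $f$ restricted to any geodesic segment of $Y$ to be a (uniform) quasigeodesic in $X$ — indeed, for $y, y'$ on a geodesic $\beta$ and $z \in \beta$ between them, $d_Y(z, \text{endpoints})$ large forces, via $\eta$, the geodesic $\alpha$ in $X$ between $f(y), f(y')$ to be far from $f(z)$, which by thinness pins $f(z)$ near $\alpha$ and yields the quasigeodesic estimate — so images of rays are quasigeodesic rays and land well, sidestepping the boundedness worry entirely. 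Everything else is a routine unwinding of definitions.
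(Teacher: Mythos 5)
The paper itself gives no proof of this lemma; it is quoted from \cite{mitra-trees}, where essentially only the ``if'' direction is proved, by the same Gromov-product/sequence argument you outline. Your main line is correct in both directions: the translation between ``a geodesic $\alpha$ from $f(y)$ to $f(y')$ passes within $r$ of $f(y_0)$'' and ``$(f(y)\mid f(y'))_{f(y_0)}\le r+O(\delta)$'' is standard, the interleaving argument gives well-definedness of the boundary value, the diagonal argument gives continuity, and the contrapositive argument for the converse (using that a continuous extension must send $\partial Y$ into $\partial X$, so the common limit of $f(y_n)$ and $f(y_n')$ is an ideal point) is exactly right.

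One auxiliary claim, however, is false and should be deleted. The $\eta$-condition does \emph{not} force $f$ restricted to a geodesic (ray) of $Y$ to be a uniform quasigeodesic of $X$, nor does it yield a lower bound $d_X(f(y_0),f(y_n))\ge \frac{1}{k}d_Y(y_0,y_n)-k$; either statement would say that $Y$ is quasi-isometrically embedded in $X$, which fails in precisely the examples this criterion is designed for (e.g.\ an exponentially distorted normal subgroup, Theorem~\ref{thm-mitra-ct}). The specific step is also broken on its own terms: for $z$ lying \emph{on} the geodesic $\beta$ one has $d_Y(z,\beta)=0$, so the $\eta$-condition (even in the basepoint-uniform form of Remark~\ref{ind of base point}) gives no control on $d_X(f(z),\alpha)$. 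Fortunately the worry this was meant to address is vacuous: since $(a\mid b)_p\le\min\{d(p,a),d(p,b)\}$, the divergence $(f(y_n)\mid f(y_m))_{f(y_0)}\to\infty$ that you have already established forces $d_X(f(y_0),f(y_n))\to\infty$, so the image sequence cannot stay bounded. With that remark excised, the proof is complete.
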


The following lemma is an analog of Lemma \ref{mitra's criterion}  for ray Cannon--Thurston maps.
We state this for completeness. The proof of this is similar to that of Lemma
\ref{mitra's criterion} in \cite{mitra-trees} and so we skip it.

\begin{lemma}\label{rmk-necsuff}{\em ( {\bf Criterion for ray Cannon--Thurston maps})}
Suppose $i:Y\map X$ is an inclusion of proper hyperbolic metric spaces and $y_0\in Y$. 
Then this pair of spaces admits a ray Cannon--Thurston map $\pa i_r :\pa Y \map \pa X$ 
if for all $\xi \in \pa Y$ there is a function $\eta_\xi: \N \to \N$ such that for any
geodesic ray $\alpha$ in $Y$ joining $y_0$ to $\xi$, and for any $p, q \in \alpha$,
$d_X(1, [p,q]_X) \leq n$ implies that either
$d_Y(y_0,p) \leq \eta_{\xi}(n)$ or $d_Y(y_0,q) \leq \eta_{\xi}(n)$.

 Moreover, a ray Cannon--Thurston map is a Cannon--Thurston map if and only if  
there is a function $\eta: \N \to \N$ such that $\eta_\xi(n) \leq \eta(n)$ for all $\xi\in \pa Y$
and all $n\in \N$.
\end{lemma}

We end this subsection with the following lemma. 
Since the proof follows by standard arguments we skip it.

\begin{lemma}\label{ray ct cont criteria}
    Suppose $(Y,X)$ is a ray Cannon--Thurston pair. Suppose $\xi \in \pa Y$ is such that for any
    sequence $\{y_n\}$ in $Y$ with $\LMY y_n =\xi$, we have $\LMX y_n =\pa i_r(\xi)$. Then $\pa i_r$ is continuous at $\xi$.

    In particular, existence of a Cannon--Thurston map implies that it is continuous.
\end{lemma}

\subsection{Quasiconvexity}
Suppose $K\ge0$. We recall that a  subset $A$ of a geodesic metric space $X$  is said to be $K$-{\em quasiconvex} if $[x,y]\sse N_K(A)$ for all $x,y\in A$ and for all geodesics $[x,y]$ joining $x,y$. We say that $A$ is {\em quasiconvex} in $X$ if $A$ is $K$-quasiconvex for some $K\ge0$. In a hyperbolic space, due to stability of quasigeodesics \cite[Theorem 1.7]{bridson-haefliger}, this becomes a qi invariant notion. In particular, quasiconvexity of a subgroup $H$ of a hyperbolic group $G$ is independent of the choice of Cayley graph of $G$. 

\subsubsection{Two lemmas on limits}
\begin{defn} 
If $\{A_n\}$ is a sequence of subsets in a hyperbolic space
$X$, then we say that {\em $\{A_n\}$ converges to $\xi \in\pa X$} if the following holds:\\
Let $x\in X$ be any point. Then
for all $M\in \N$ there is $N=N(M) \in \N$ such that for any $n>N$ and 
any $x_n\in A_n$ we have $d(x,[x_n,\xi))>M$.
\end{defn}
It would follow that $\{A_n\}$ converges to $\xi \in \pa X$ if and only if  any neighborhood of $\xi$ in $\overline{X}$ contains all but finitely 
many $A_n$'s. However,
by Lemma \ref{limit1} this means, in particular, that for any sequence
$\{x_n\}$ in $X$ where $x_n\in A_n$, for all $n\in \N$, we have 
$\LMX x_n =\xi$. 

The following lemmas are very standard, hence we state
them without proof.
\begin{lemma}\label{set conv}
Suppose $X$ is a hyperbolic metric space and $\{A_n\}$ is a sequence
of $k$-quasiconvex sets in $X$, where $k\ge0$. Let $x\in X$. Suppose $d(x, A_n)\map \infty$
as $n\map \infty$. Finally, suppose that there is a sequence $\{x_n\}$ in $X$ where $x_n\in A_n$ for all $n\in \N$ and $\LMX x_n =\xi\in \pa X$.
Then $\{A_n\}$ converges to $\xi$.
\end{lemma}

\begin{lemma}\label{limitset intersection}
    Suppose $X$ is a proper hyperbolic metric space and $\{A_n\}$ is a descending sequence of nonempty, $k$-quasiconvex sets, i.e. $A_{n+1}\subset A_n$ for all $n\in \N$, where $k\ge0$.     If $d(x, A_n)\map \infty$ for some $x\in X$ and 
    $\Lambda(A_n)\neq \emptyset$ for all $n\in \N$ then $\cap \Lambda(A_n)$ is a point of $\pa X$ (and $\{A_n\}$ converges to this point).
\end{lemma}

 \subsubsection{Malnormal quasiconvex subgroups of hyperbolic groups}\label{subsec-malnormalqc} 

We recall that a subgroup $F$  of a group $H$ is called {\em malnormal (resp.\ almost malnormal) }	if for all $h \in H \setminus F$, $hFh^{-1} \cap F$ is $\{1\}$	(resp.\ finite).

The following lemma is immediate from the definition.
\begin{lemma}\label{composition mal}
	Suppose $K<H<G$ are groups such that $K$ is malnormal $($resp. almost malnormal$)$ in $H$ and $H$ is malnormal $($resp. almost malnormal$)$ in $G$. Then $K$ is malnormal $($resp. almost malnormal$)$ in $G$.
\end{lemma}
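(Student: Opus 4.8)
The plan is to verify the defining condition for (weak) malnormality of $K$ in $G$ directly, splitting according to whether the conjugating element lies in the intermediate subgroup $H$. So I would fix an arbitrary $g \in G \setminus K$ and aim to show that $gKg^{-1} \cap K$ is trivial (resp.\ finite).

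First I would treat the case $g \in H$. Then $g \in H \setminus K$, and since $K$ is malnormal (resp.\ weakly malnormal) in $H$, the intersection $gKg^{-1} \cap K$ is trivial (resp.\ finite) immediately from Definition~\ref{defn-malnormal}. Next I would treat the case $g \in G \setminus H$. Here the key observation is that $K < H$ forces $gKg^{-1} < gHg^{-1}$, whence $gKg^{-1} \cap K \subseteq gHg^{-1} \cap H$. Since $H$ is malnormal (resp.\ weakly malnormal) in $G$ and $g \in G \setminus H$, the right-hand side $gHg^{-1} \cap H$ is trivial (resp.\ finite); as a subset of the trivial group is trivial and a subgroup of a finite group is finite, the same holds for $gKg^{-1} \cap K$.

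Combining the two cases, $gKg^{-1} \cap K$ is trivial (resp.\ finite) for every $g \in G \setminus K$, which is precisely the assertion that $K$ is malnormal (resp.\ weakly malnormal) in $G$.

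I do not expect any genuine obstacle: the statement is a formal consequence of the definition. The only point requiring a moment's care is the containment $gKg^{-1} \cap K \subseteq gHg^{-1} \cap H$ used in the second case, which is exactly what makes (weak) malnormality "pass through" the intermediate subgroup.
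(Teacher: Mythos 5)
Your proof is correct, and it is exactly the routine verification the paper has in mind: the paper states this lemma without proof, calling it "a direct consequence of the definition," and your two-case argument (conjugator in $H$ versus outside $H$, using the containment $gKg^{-1}\cap K\subseteq gHg^{-1}\cap H$ in the latter case) is the standard way to spell that out.
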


We shall make use of the following basic lemma a number of times later on.
The proof is easy and hence we omit it.
\begin{lemma}\label{malnormal basic}
	Suppose $G$ is any group which has no subgroup isomorphic to $\mathbb Z\oplus \mathbb Z$.
	Suppose $N\lhd G$ is a torsion-free subgroup. Let $Q=G/N$ and $\pi:G\map Q$ be the quotient map.
	Suppose  $H$ is a malnormal torsion-free subgroup of $Q$. Suppose $g:H\map G$ is a group
	theoretic section of $\pi$ over $H$. Then $g(H)$ is a malnormal subgroup of $G$.
\end{lemma}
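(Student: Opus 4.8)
The plan is to show that if $g(H)$ fails to be malnormal in $G$, then $G$ contains a copy of $\mathbb{Z}\oplus\mathbb{Z}$, contradicting the hypothesis. So suppose there exists $x \in G \setminus g(H)$ with $g(H) \cap x g(H) x^{-1}$ nontrivial; pick a nontrivial element $a$ in this intersection, so $a = g(h_1)$ and $x^{-1} a x = g(h_2)$ for some $h_1, h_2 \in H$, both nontrivial since $N$ and $H$ are torsion free and $g$ is injective. The first step is to analyze the image under $\pi$. Applying $\pi$ and using $\pi \circ g = \mathrm{id}_H$, we get $h_1 = \pi(a)$ and $\pi(x)^{-1} h_1 \pi(x) = h_2$, so $h_1$ and its conjugate by $\pi(x)$ both lie in $H$. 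Malnormality of $H$ in $Q$ forces $\pi(x) \in H$ (as $h_1 \neq 1$). Hence $\pi(x) = h$ for some $h \in H$, and replacing $x$ by $g(h)^{-1} x$ we may assume $\pi(x) = 1$, i.e. $x \in N$; note $x \neq 1$ since otherwise $a = x^{-1}ax = g(h_2)$ would give $x \in g(H)$... more carefully, after this reduction $x \in N \setminus \{1\}$ because if $x$ landed in $g(H)$ we'd contradict the choice $x \notin g(H)$ (the replacement $g(h)^{-1}x$ lies in $g(H)$ iff $x$ does, since $g(h) \in g(H)$).

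The second step extracts the $\mathbb{Z}\oplus\mathbb{Z}$. We now have $x \in N\setminus\{1\}$ and $a = g(h_1) \in g(H)$ nontrivial with $x^{-1} a x = g(h_2) \in g(H)$. Apply $\pi$ to $x^{-1} a x$: since $x \in N$, $\pi(x^{-1}ax) = \pi(a) = h_1$, so $h_2 = h_1$, i.e. $x^{-1} a x = a$. Thus $a$ and $x$ commute. Now $a$ has infinite order (it lies in $g(H) \cong H$, torsion free) and $x$ has infinite order (it lies in $N$, torsion free). It remains to see that $\langle a, x\rangle \cong \mathbb{Z}\oplus\mathbb{Z}$, i.e. that $a$ and $x$ generate a free abelian group of rank $2$ rather than an infinite cyclic group. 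The key point is that $a \in g(H)$ maps to the nontrivial element $h_1 \in H$ under $\pi$, whereas $x \in N = \ker\pi$; so no nonzero power of $x$ can equal a nonzero power of $a$ (the former is in $N$, the latter is not, since $\pi(a^k) = h_1^k \neq 1$ as $H$ is torsion free). An abelian group generated by two commuting infinite-order elements with this independence property is $\mathbb{Z}\oplus\mathbb{Z}$: more precisely, $\langle a \rangle \cap \langle x \rangle = \{1\}$ and both are normal (indeed central) in $\langle a, x\rangle$, so $\langle a, x\rangle \cong \langle a\rangle \times \langle x\rangle \cong \mathbb{Z}\oplus\mathbb{Z}$. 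This contradicts the standing hypothesis on $G$, completing the proof.

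The main obstacle, such as it is, lies in the bookkeeping of the reduction step: one must be careful that after translating $x$ by an element of $g(H)$ the hypothesis $x \notin g(H)$ is preserved, and that the nontriviality assumptions on $a$ and $x$ survive (this uses torsion-freeness of both $N$ and $H$ together with injectivity of the section $g$). Once the reduction to $x \in N \setminus\{1\}$ commuting with a nontrivial $a \in g(H)$ is in hand, producing the $\mathbb{Z}^2$ is essentially forced. I would also double-check the degenerate possibility that $a$ itself lies in $N$, but that cannot happen here since $\pi(a) = h_1 \neq 1$; this is exactly what makes the two cyclic subgroups independent.
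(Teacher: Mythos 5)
Your proof is correct and follows essentially the same route as the paper's: use malnormality of $H$ in $Q$ to reduce to the case $\pi(x)=1$ (the paper phrases this as writing $x=a\cdot t$ in the semidirect product $\langle g(H),N\rangle$), deduce that a nontrivial element of $g(H)$ commutes with a nontrivial element of $N$, and contradict the absence of $\mathbb Z\oplus\mathbb Z$. The only cosmetic point is that when you replace $x$ by $g(h)^{-1}x$ you should also conjugate $a$ to $g(h)^{-1}ag(h)$ so that the intersection condition is preserved verbatim; you are in fact slightly more explicit than the paper in verifying that the two cyclic subgroups generate a genuine $\mathbb Z^2$.
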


 The following theorem shows that  (almost) malnormal, quasiconvex subgroups are
 ubiquitous in hyperbolic groups.

 \begin{theorem}\label{main malnormal thm}(\cite{kapovich-nonqc}, \cite[Theorems 2.24 (c), 2.10]{DGO})
	Suppose $G$ is a hyperbolic group and $H<G$ is a quasiconvex subgroup. Moreover, we assume that $H\simeq F_n$ for $n\ge2$. Then given $m\in\N$, we can find a free subgroup $F<H$ of rank $m$ such that $F\times A<G$ where $A$ is a finite subgroup of $G$ and $F\times A$ is almost malnormal, quasiconvex in $G$.
\end{theorem}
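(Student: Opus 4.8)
The plan is to treat the two assertions of the conclusion separately, observing first that \emph{quasiconvexity will be automatic}. Indeed, every finitely generated subgroup of a free group is quasiconvex in it, quasiconvexity is transitive along $F\le H\le G$ when $H$ is quasiconvex in $G$, and a finite extension of a quasiconvex subgroup is again quasiconvex; hence for \emph{any} free subgroup $F\le H$ of rank $m$ and any finite subgroup $A\le G$ normalizing $F$, the subgroup $F\times A$ is automatically quasiconvex in $G$. So the entire content of the theorem is to produce $F\le H$ free of rank $m$ together with a finite $A\le G$ centralizing $F$ for which $F\times A$ is \emph{weakly malnormal} in $G$, i.e.\ $x(F\times A)x^{-1}\cap(F\times A)$ is finite for all $x\in G\setminus(F\times A)$ (see Definition~\ref{defn-malnormal}).

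For the construction I would proceed as follows. Since $H\cong F_n$ with $n\ge 2$ is non-elementary, it contains pairwise independent hyperbolic elements of $G$; I would choose $g_1,\dots,g_m\in H$ that freely generate a rank-$m$ free subgroup of $H$ and, crucially, are chosen \emph{generically} --- after replacing them by a common high power, one arranges that the symmetrized family $\{g_1^{\pm1},\dots,g_m^{\pm1}\}$ satisfies a strong small cancellation condition over the hyperbolic group $G$. This is possible because $H$ is quasiconvex in $G$, so geodesic words of $H$ remain uniform quasigeodesics of $G$ and a sufficiently long generic word of $H$ inherits the required small-cancellation behaviour in $G$. I would also fix a finite $A\le G$ at this stage: if $E(g_i)$ denotes the maximal elementary subgroup of $G$ containing $g_i$ and $A_i\lhd E(g_i)$ its maximal finite normal subgroup, then a further power of the $g_i$ can be taken so that all the $g_i$ centralize a common finite subgroup $A$ with $\langle g_i\rangle\times A$ of finite index in $E(g_i)$; when $G$ is torsion-free one simply takes $A=\{1\}$. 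Set $F:=\langle g_1,\dots,g_m\rangle$ (the powering absorbed). Freeness of $F$ of rank $m$, and quasiconvexity of $F$ and hence of $F\times A$ in $G$, then follow from the small-cancellation structure --- equivalently, from a ping-pong argument on $\partial G$ using the attracting/repelling fixed points of the $g_i$, together with the standard fact that high powers of independent hyperbolic elements generate a quasiconvex subgroup.

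The remaining and essential point is weak malnormality of $F\times A$, which I would argue as follows. Suppose $x\in G$ is such that $x(F\times A)x^{-1}\cap(F\times A)$ is infinite; being an infinite quasiconvex subgroup, it contains a hyperbolic element, so there is an infinite-order $u$ with $u\in F\times A$ and $xux^{-1}\in F\times A$. Since $A$ is finite, $\Lambda(F\times A)=\Lambda F$, and every element of $F$ is joined to $1$ by a uniform quasigeodesic obtained by concatenating the defining pieces $g_i^{\pm1}$; the bi-infinite periodic concatenations are then quasi-axes of the hyperbolic elements of $F$. By the small-cancellation hypothesis (a Greendlinger / strong-contraction argument), two such quasi-axes that fellow-travel along a full bi-infinite line must differ by the action of an element of $F\times A$. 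Applying this to the quasi-axis of $u$ and to that of $xux^{-1}=x\cdot(\text{quasi-axis of }u)$ forces $x$ into the coset-neighbourhood $F\cdot A$ up to bounded error, and a further small-cancellation argument upgrades this to $x\in F\times A$, contradicting $x\notin F\times A$.

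The step I expect to be the main obstacle is precisely this verification of weak malnormality: passing from the \emph{finitely many} conditions expressing that the $g_i$ are pairwise independent and in general position on $\partial G$ to \emph{uniform} control over $x(F\times A)x^{-1}\cap(F\times A)$ for \emph{all} $x\in G$. This is exactly where one needs both the passage to high powers and a quantitative small-cancellation / contracting estimate over $G$; in the torsion-free case this is Kapovich's argument \cite{kapovich-nonqc}. A secondary but genuine difficulty is the torsion bookkeeping: the maximal finite normal subgroups $A_i\lhd E(g_i)$ centralize $g_i$ and would by themselves destroy weak malnormality of $F$ alone, so they must be absorbed into a single finite factor $A$ --- which is why the conclusion is stated for $F\times A$ rather than for $F$, and which is the point carried out by the first and third authors in \cite{ps-rakesh-qc}.
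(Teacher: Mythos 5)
First, a point of reference: the paper does not prove Theorem~\ref{main malnormal thm} at all --- it is imported as a black box, citing \cite{kapovich-nonqc} for the torsion-free case and the preprint \cite{ps-rakesh-qc} for the general case. So there is no in-paper proof to compare against, and your proposal has to be judged on its own merits. Your opening reduction is correct and worth having: quasiconvexity of $F\times A$ in $G$ is indeed automatic (finitely generated subgroups of free groups are quasiconvex, quasiconvexity is transitive through the quasiconvex subgroup $H$, and a finite extension does not affect it), so the entire content is weak malnormality. Your overall strategy --- high powers of independent elements of $H$, a small-cancellation/contraction estimate over $G$, torsion absorbed into a finite direct factor --- is the right family of ideas and is consistent with what the cited references do.

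There are, however, two genuine gaps. The first is the production of $A$. You claim that ``a further power of the $g_i$ can be taken so that all the $g_i$ centralize a common finite subgroup $A$ with $\langle g_i\rangle\times A$ of finite index in $E(g_i)$.'' Passing to powers does not change $E(g_i)$ or its maximal finite normal subgroup $A(g_i)$; it only makes $g_i^{N}$ centralize $A(g_i)$. For an arbitrary independent family the subgroups $A(g_1),\dots,A(g_m)$ can be pairwise distinct (e.g.\ an involution commuting with $g_1$ but with no nontrivial torsion element commuting with $g_2$), and then no single $A$ works: if $a$ centralizes $g_1$ but $a\notin F\times A$, then $a(F\times A)a^{-1}\cap(F\times A)\supseteq\langle g_1\rangle$ is infinite and weak malnormality fails. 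The correct object is the unique maximal finite subgroup of $G$ normalized by the non-elementary subgroup $F$ (Olshanskii's $E_G(F)=\bigcap E(f)$ over infinite-order $f\in F$), and the $g_i$ must be \emph{chosen} so that each $E(g_i)$ lies inside $F\times A$ --- a constraint on the choice, not a consequence of powering. This coordination is exactly the ``technical modification'' that \cite{ps-rakesh-qc} supplies.

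The second gap is that the weak malnormality verification is asserted rather than proved. The statement ``two standard quasi-axes that fellow-travel along a full bi-infinite line must differ by an element of $F\times A$'' is essentially equivalent to the theorem: in your application the two objects being compared are a standard quasi-axis of $xux^{-1}$ and the translate $x\cdot(\text{quasi-axis of }u)$, which is \emph{not} a standard quasi-axis, so the rigidity claim as stated does not directly apply, and the upgrade from ``$x$ lies in a bounded neighbourhood of $FA$'' to ``$x\in F\times A$'' is where all the small-cancellation work (the specific form of the words $g_i$, distinctness of long pieces) actually happens. You correctly locate the difficulty, but the proposal defers it to \cite{kapovich-nonqc} and \cite{ps-rakesh-qc} rather than closing it --- which, to be fair, is also what the paper does.
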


When $G$ is torsion-free, the above result is  due to Ilya Kapovich (\cite{kapovich-nonqc}). The general case follows from the work of
Dahmani, Guirardel and Osin (\cite[Proposition 2.10, Theorems 2.24 (c)]{DGO}).


\subsubsection{A quasiconvex embedding lemma}
The following lemma is well known, e.g. it is implied in \cite[Proof of Theorem 1.1, pp. 44--45]{mats-oguni}. We include a proof for the sake of completeness.

\begin{lemma}\label{mat-oguni elaborate}
	Suppose $G=H_1*_KH_2$ is an amalgamated free product where $H_1$, $H_2$ are hyperbolic, and $K$ is an almost malnormal and quasiconvex subgroup of $H_1$.  Then $G$ is hyperbolic, and $H_2$ is almost malnormal and quasiconvex in $G$.
\end{lemma}

\begin{proof} 
Since $K$ is almost malnormal and quasiconvex in $H_1$,  $H_1$ is hyperbolic relative to $K$ by \cite[Theorem 7.11]{bowditch-relhyp-publish}. Then by \cite[Theorem 0.1 (2)]{dahmani-com}, $G$ is hyperbolic relative $H_2$. Now, since $H_2$ is hyperbolic, 
it follows by \cite[Corollary 2.41]{osin-2006} that $G$ is hyperbolic. 
	
	Finally, as $G$ is hyperbolic relative to $H_2$,  by \cite[Theorem $1.5$]{osin-bddgen} $H_2$ is quasiconvex and almost malnormal in $G$.
	\end{proof}

\begin{defn}\label{defn-hyp endo}
  For a hyperbolic group $Q$, an  endomorphism $f:Q\map Q$
is said to be a {\em hyperbolic endomorphism} if (1) $f$ is injective,
(2) $f(Q)$ is quasiconvex in $Q$ and (3) the corresponding
HNN extension $Q\ast_f$ is hyperbolic.  
\end{defn}

\begin{remark}
In Definition \ref{defn-hyp endo}  conditions $(1)$ and $(3)$ imply that $Q$ is a free product of a free group and surface groups if one assumes that $Q$ is torsion-free. This is essentially by Paulin's theorem and \cite{rips-sela} as explained in the introduction to \cite{mitra-endlam} (the argument there works for endomorphisms as well as automorphisms). Therefore, in this case, \cite[Corollary 2]{kap-qc-amal} implies that condition $(2)$ follows from remaining hypotheses.
\end{remark}

\begin{cor}\label{basic combination}
	Suppose $H$ is a hyperbolic group and $Q$ is an almost malnormal quasiconvex subgroup of
	$H$. Suppose $\phi:Q\map Q$ is a hyperbolic endomorphism of $Q$
	and $L=Q*_\phi$ is the ascending HNN extension corresponding to $\phi$ with  stable letter $t$ (so that $tqt^{-1}=\phi(q)$ in $L$ for all $q \in Q$). 	Let $G=H*_{Q}L$.
	Then \begin{enumerate}
		\item $G$ is hyperbolic. 	\item $L$ is quasiconvex in $G$.
		\item $\lim_{n\map \infty}^L t^n\in \Lambda_L(Q)$.
		\item $\lim_{n\map \infty}^G t^n\in \Lambda_G(Q)$.
	\end{enumerate}
\end{cor}

\begin{proof}
	Since $G=H*_{Q}L$ and $L$ is hyperbolic, conclusions $(1)$ and $(2)$ follows from Lemma \ref{mat-oguni elaborate}.

	$(3)$  For any $q \in Q$ and $n\in \mathbb N$, we have $t^n q t^{-n} = \phi^n(q) \in Q$. We note that $\{\phi^n(q)\}$ is an unbounded sequence in $Q$ since $\phi$ is hyperbolic. Hence, by Lemma \ref{limit g^n}, we have $\lim_{n\map \infty}^L t^n\in \Lambda_L(Q)$.

	(4) Since $L$ is quasiconvex in $G$, so by  $(3)$, we have $\lim_{n\map \infty}^G t^n\in \Lambda_G(Q)$.
\end{proof}

\subsection{Trees of spaces for amalgamated free products of groups}\label{trees of spaces}
Graphs of hyperbolic groups satisfying a qi embedded condition
were introduced by Bestvina and Feighn in \cite{BF} to which we refer for 
the relevant notion of `trees of metric spaces'. 
We  refer to \cite{BF} or \cite{scott-wall} for 
details. However, we recall below a coarsely equivalent
object- which may be called `a tree of metric graphs',  
following \cite{ps-conical}. Moreover, this is described only for
amalgamated free products of groups as this is the only
case relevant for us. For a graph $\mathcal G$,
recall that $\mathcal V(\mathcal G)$ and $\mathcal E(\mathcal G)$ denote the vertex set
and edge set of $\mathcal G$ respectively.

Suppose $H=U\ast_ W V$ is an amalgamated free product of finitely
generated groups $U, V$ along a common finitely generated  subgroup $W$. 

\noindent {\bf Bass--Serre tree $T_H$.}\\
The Bass--Serre tree $T_H$ is defined as follows.
$$\V(T_H)= H/U \bigsqcup H/V, \,\, \mathcal E(T_H) = H/W$$
where $hW\in E(T_H)$ joins $hU, hV\in \mathcal V(T_H)$.

 As
$H$ naturally acts on $H/U, H/V, H/W$ these actions induce
a natural action of $H$ on $T_H$. 
  We shall need
finite generation of the groups involved for the next construction.

\smallskip \noindent
{\bf Tree of metric graphs $\pi_H:X_H\map T_H$.}\\
Let $S_W$ be a finite generating set for $W$, and let $S_U \supset S_W$
and $S_V \supset S_W$ be finite generating sets for $U$ and $V$
respectively. 
Now let $S=S_U \cup S_V$. Then $S$ is a finite generating set
for $H$. Let $\Gamma(H,S)$ be the Cayley graph of $H$ with respect to
$S$. 

Let $X_1=\Gamma(U,S_U)$, $X_2=\Gamma(V,S_V)$ and $X_{12}=\Gamma(W,S_W)$
be the Cayley graphs of $U, V, W$ respectively with respect their
chosen generating sets. Note that these graphs are naturally subgraphs
of $\Gamma(H,S_H)$. Thus for all $h\in H$, $hX_1, hX_2, hX_{12}$
are subgraphs of $\Gamma(H, S_H)$. The graph $X_H$ is obtained by introducing some new edges to the 
disjoint union of  the graphs
$$X'_H=(\bigsqcup_{hU\in H/U} hX_1) \bigsqcup (\bigsqcup_{hV\in H/V} hX_2)$$
as follows: 
For every edge $hW\in \mathcal E(T_H)$, each of the pair of  vertices 
$hw\in hX_1$ and $hw\in X_2$ are joined by an edge where $w\in W$.

We note that (1) there is a natural map $\pi_H: X_H \map T_H$; 
(2) the natural $H$-action on $\Gamma(H,S)$
induces an $H$-action on $X_H$ so that $\pi_H$ is $H$-equivariant.

\begin{lemma}\textup{(\cite[Lemmas 3.4, 3.5]{ps-conical}}\label{trees of spaces lemma}
(1) The natural action of $H$ on $X_H$ is proper and cofinite.

(2) 
Let $x_0$ correspond to the vertex $1\in U\subset X_H$ so that $\pi_H(x_0)=U\in H/U$. Let $\phi:H\map X_H$ be the orbit map $h\mapsto hx_0$. Then there exists
$D>0$ such that $Hd(hX_1, hU.x_0)\leq D $, $Hd(hX_2, hV.x_0)\leq D $ and
$Hd(hX_{12}, hW.x_0)\leq D $ where 
$hX_{12}\subset hX_1$ for all $h\in H$.
\end{lemma}
We note that $H$ is quasi-isometric to $X_H$ by the \v{S}varc--Milnor
lemma (\cite[Lemma 8.9, I]{bridson-haefliger} and Lemma \ref{trees of spaces lemma} (1).

{\bf Convention.} (1) We shall refer to the
vertices coming from $H/U$ as the {\em $U$-type vertices} of $T_H$ and similarly
those from $H/V$ will be referred to as the {\em $V$-type vertices} of $T_H$.

(2) The inverse image under $\pi_H$ of a vertex of $T_H$ will be referred to as
a {\em vertex space} of the tree of spaces or simply a vertex space of $X_H$.
Hence these are of the form $hX_1=\pi^{-1}_H(hU)$ or $hX_2=\pi^{-1}_H(hV)$ 
for some $h\in H$. For the edge in $T_H$ connecting $hU$ and $hV$, the subgraph
$hX_{12}$ in $hX_1$ (or $hX_2$) will be referred to as the {\em edge space
in $hX_1$ (resp. in $hX_2$)}.

\section{An existence theorem for ray Cannon--Thurston maps}\label{sec-crit-land}

We prove the following theorem in this section some of whose ideas go back to \cite{GMRS}. 
 \begin{theorem}\label{prop-suffland}

Suppose $H<G$ are hyperbolic groups such that the following hold:
\begin{enumerate}
\item $G=A\ast_C B$ and $H=U\ast_W V$
where $U<A, V<B, U\cap V=W$ and $H$ is an immersed subgraph
of subgroups for the amalgamated free product decomposition of $G$ as per 
\cite{bass-cov}. This amounts to saying that the natural map
$U/W\map A/C$ and $V/W \map B/C$ are both injective.
\item $A, B,C, U, V, W$ are all hyperbolic groups.
\item $C$ is malnormal and quasiconvex in $A$.
\item $W$ is malnormal and quasiconvex in $U$.
\item The inclusion maps $U\map G$ and $V\map G$ admit ray Cannon--Thurston maps. 
\end{enumerate}
 
  Then the pair $(H,G)$ admits a ray Cannon--Thurston map.
 \end{theorem}
\begin{proof}
During the course of the proof, for any finitely generated
group $P$,  $S_P$ will denote a chosen finite generating set for $P$. For the sake of
clarity of exposition, the proof is divided into three steps.

\smallskip
\noindent
{\bf Step 1. Construction of trees of spaces.} 
The proof uses the geometry of the trees of graphs $\pi_G:X_G\map T_G$ and $\pi_H:X_H\map T_H$
associated with the amalgamated free product decompositions of $G,H$ respectively, as was described
in Section \ref{trees of spaces}. We note that $T_G$, $T_H$ are the Bass--Serre trees which have
nothing to do with the generating sets of the groups involved; however, the graphs $X_G$ and $X_H$ 
are dependent on choices of generating sets $S_G, S_H$ of $G$ and $H$ respectively. 
We assume that the following hold: 
\begin{enumerate}
    \item $S_U\subset S_A $, $S_V\subset S_B$
    \item $S_W\subset S_C$
    \item $S_W\subset S_U\cap S_V$, $S_C\subset S_A\cap S_B$.
    \end{enumerate} 
Then we define $S_G=S_A\cup S_B$, $S_H=S_U\cup S_V$, so that $S_H\subset S_G$. It follows from
the construction of Section \ref{trees of spaces} and hypothesis (1) of the theorem that 
there are natural  $H$-equivariant embeddings $\phi:X_H\map X_G$ and $\psi:T_H\map T_G$
such that $\pi_G\circ \phi=\psi\circ \pi_H$. 
Without loss of generality, therefore, we will assume that
(1) $X_H\subset X_G$,  (2) $T_H\subset T_G$ and (3) $\pi_H$ is the restriction of $\pi_G$ on $X_H$.

 By Lemma \ref{trees of spaces lemma} (1)
 it is now enough to show that any geodesic ray
in $X_H$ converges to a point of $\pa X_G$.
This is done in the next two steps of the proof.
However, before we proceed further
we note the following observations about these trees of spaces.\\

\noindent
{\bf Properties of $X_G$ and $X_H$:}\\
$(i)$ We note that there are two types of vertices in $T_H$ $-$ the $U$-type and the $V$-type. Similarly,
there are two types of vertices in $T_G$ $-$ the $A$-type and the $B$-type. 
Moreover, any $V$-type vertex of $T_H$ is a $B$-type vertex
of $T_G\supset T_H$ and any $U$-type vertex of $T_H$ is an $A$-type vertex of $T_G$.\smallskip\\
$(ii)$ If $t\in \V(T_H)$ is a $V$-type vertex, say $t=hV$,
the pre-image $\pi^{-1}_H(t)$, by definition, is the subgraph $h\Gamma(V, S_V)$ of $\Gamma(H,S_H)$. By
Lemma \ref{mat-oguni elaborate} $V$ is quasiconvex in $\Gamma(H,S_H)$ whence each coset of $V$ in 
$\Gamma(H,S_H)$ is (uniformly) quasiconvex. Then it follows that $\pi^{-1}_H(t)$ is (uniformly) quasiconvex
in $X_H$ by Lemma \ref{trees of spaces lemma}.

Similarly, $\pi^{-1}_G(t')$ is (uniformly) quasiconvex in $X_G$ for any
$B$-type vertex $t'$. In particular, this is true if $t'$ were a $V$-type vertex of $T_H$.\smallskip\\
$(iii)$ If $t\in \V(T_H)$ is a $U$-type vertex then by  the tree of spaces structure of $X_H$, it follows that $\pi^{-1}_H(B(t; 1))$
is a (uniformly) quasiconvex subset of $X_H$.

One gets a similar statement for $X_G$ as well for an $A$-type vertex.\smallskip

\noindent {\bf Step 2. A classification of quasigeodesics in $X_H$.} The aim of this 
part of the proof is to establish the following.

{\bf Claim:} {\em For any geodesic ray $\gamma \subset X_H$ there is a
uniform quasigeodesic ray $\gamma'\subset X_H$ asymptotic to $\gamma$ such that exactly 
one of the following holds:\\
(1) $\gamma'$ is contained in a $V$-type vertex space. \\
(2) The image of $\pi_H \circ\gamma'$ is contained in the $1$-neighborhood of an $U$-type vertex. \\
(3) $\pi_H \circ\gamma'$ is a metrically proper map
i.e. inverse images of bounded sets under $\pi_H \circ\gamma'$ are bounded; moreover, the image 
of $\pi_H \circ\gamma'$ is contained in the $1$-neighborhood of a geodesic ray of $T_H$.} \smallskip

 {\em Proof of claim:} There are two cases to consider. First,
suppose that there is $t\in \V(T_H)$ such that $\pi_H \circ \gamma (n)=t$ for infinitely many $n\in\N$. 
In this case, there are two possibilities. If $t$ is a $V$-type 
vertex then a subsequence of $\{\gamma(n)\}$ is contained in $\pi^{-1}_H(t)$ 
which is quasiconvex in $X_H$  by property $(ii)$ above. Thus we may find a uniform quasigeodesic $\gamma'$ of $X_H$ asymptotic to $\gamma$
which is contained in $\pi^{-1}_H(t)$. 
However,  if $t$ is a $U$-type vertex then, in the same way, by property $(iii)$ above, we may 
find a uniform quasigeodesic $\gamma' \subset \pi^{-1}_H(B(t; 1))$ which is asymptotic to $\gamma$.

In the complementary case, the image of $\pi_H \circ \gamma $
is a subtree of $T$ in which every vertex has finite degree and the image
is of infinite diameter. We note that any unbounded, proper hyperbolic metric space
contains a geodesic ray; e.g. see the proof of Lemma $3.1$ in \cite[Chapter III.H]{bridson-haefliger}.
Thus there is a geodesic ray $\alpha$, say, contained
in the  image of $\pi_H \circ \gamma $. We note that on any geodesic in $T_H$, $V$-type and
$U$-type vertices appear alternately.
Suppose $\{t_n\}$ is the sequence of consecutive vertices
on $\alpha$. Then without loss of generality, we may assume that each $t_{2n-1}$
is a vertex of $V$-type for all $n\in \N$. 
Let $x_n\in \gamma \cap \pi^{-1}_H(t_{2n-1})$. For all $n\in \N$,
let $\gamma'_n$ be a 
uniform quasigeodesic of $X_H$ joining $x_n, x_{n+1}$  such that 
$\pi_H(\gamma'_n)\subset B(t_{2n}; 1)$. This can be ensured  as
 $\pi^{-1}_H(B(t_{2n}; 1))$ is a uniformly quasiconvex subset of $X_H$ by $(iii)$ above.
 Let $\gamma'$ be the concatenation of the $\gamma'_n$'s in this case.
 Then it follows that $\gamma'$ is of type (3) as mentioned in the claim.\\
{\bf Step 3. Completion of the proof.}
To complete the proof of the theorem, it is enough to show that any $\gamma'$
as in Step 2 converges to a point of $\pa X_G$. The proof of this is divided into two cases. We shall continue with the notation of Step 2.

{\bf Case 1.} $ \gamma'$ is of type (3).\\
We recall that $t_{2n}$ is a $U$-type vertex of $T_H$ 
(and an $A$-type vertex of $T_G$) on $\alpha$ for all $n\in \N$.
Suppose $T_n$ is the component of $T_G$ containing $t_{2n+1}$
obtained by removing an interior of the edge joining $t_{2n}, t_{2n+1}$. 
Let $\mathscr I_n=\pi^{-1}_G(T_n)$. By property $(ii)$ of $X_G$ mentioned in the
Step $1$ of the proof, $\pi^{-1}_G(t_{2n+1})$ is uniformly quasiconvex
in $X_G$. It then follows from the tree of spaces structure of $X_G$
that each $\mathscr I_n$ is uniformly quasiconvex in $X_G$.
Moreover, for any point $x\in X_G$, $d(x, \mathscr I_n)\map \infty$ as $n\map \infty$. Now,
as  we have a descending chain of uniformly quasiconvex and unbounded sets
$\mathscr I_1 \supset \mathscr I_2\supset \cdots $ we have
$\cap _n\Lambda(\mathscr I_n)=\{\xi\}$ for some $\xi\in \pa X_G$,
by Lemma \ref{limitset intersection}. It follows that for any sequence
$\{x_n\}$ in $X_G$ such that $x_n\in \mathscr I_n$ for all $n\in \N$,
we have $x_n\map \xi$ as $n\map \infty$. From this, it immediately
follows that $\gamma'(n)\map \xi$ as $n\map \infty$.

{\bf Case 2.} $\gamma'$ is of type (1) or (2).\\
In this case, if $\gamma'$ has an infinite segment which 
is contained in a vertex space of $U$-type or $V$-type then we are done
as $(U,G), (V,G)$ are ray Cannon--Thurston pairs. Else there is a $U$-type vertex $t$, say, 
such that $(a)$ $\pi_H(\gamma') \subset B(t;1)$, and $(b)$ 
$\gamma'\setminus \pi^{-1}_H(t)$ has infinitely many components
of unbounded length each contained in a $V$-type vertex space in $\pi^{-1}_H(B(t;1))$. In this case,
the idea is to replace $\gamma'$ by another quasigeodesic in $X_H$ which is asymptotic
to $\gamma'$ and to show that the new quasigeodesic converges to
a point of $\pa X_G$.

As $H$ acts transitively on the $U$-type vertices of $T_H$,
without loss of generality, we may assume that $t$ is the vertex corresponding to the identity coset of $U$ in $H$. 
Let $X_1$ denote the corresponding vertex space of $X_H$. 
Let $X_{12}\subset X_1$ be the edge space corresponding to the identity
coset of $W$. Note that the vertices adjacent to $t$ are of the form $uV$ where $u\in U$ and edges incident on $t$ are $uW$, $u\in U$.
The following lemma is necessary to complete the  proof. 

We refer the reader to \cite{farb-relhyp}
for basics on the construction of electric spaces by
coning off subsets, and to
 \cite{bowditch-relhyp-publish} and \cite{dahmani-mj-height}
for related notions in the context of relative hyperbolicity.

\begin{lemma}\label{ray ct lemma} $ $\\
(1) $X_H$ is hyperbolic relative to the $1$-neighborhoods of the vertex
spaces over the $V$-type vertices. \\ 
(2) Each of the $U$-type vertex spaces is hyperbolic relative to the
corresponding edge spaces. 
In particular, $X_1$ is hyperbolic relative to $\{uX_{12}:u \in U\}$.\\
(3) 
Suppose $\widehat{X}_1$ denotes the graph obtained from $X_1$ by coning off
$\{uX_{12}\}$, $u\in U$. Suppose $\widehat{X}_H$ denotes the coned off graph
obtained by coning the various $1$-neighborhoods of the $V$-type vertex
spaces. Then the natural inclusion $\widehat{X}_1\map \widehat{X}_H$
is a quasi-isometric embedding.
\end{lemma}
\begin{proof}
We note that
$V$ is malnormal and quasiconvex in $H$ by Lemma \ref{mat-oguni elaborate}.
Hence, $H$ is hyperbolic  relative to the cosets of $V$ by \cite[Theorem 7.11]{bowditch-relhyp-publish} (see also 
\cite[Proposition 2.10]{mahan-relrig}). It then follows from Lemma \ref{trees of spaces lemma} that $X_H$ is hyperbolic relative to the 
$V$-type vertex spaces since the natural quasi-isometry between $\Gamma_H$ and $X_H$ takes $\Gamma_V$ to the $V$-type vertices. This implies that $X_H$ is hyperbolic relative to the $1$-neighborhoods of the $V$-type vertices (see \cite[Lemma 4.5, Proposition 4.6]{farb-relhyp} and \cite[Section 3.1]{mahan-ibdd} for instance).
This proves (1). Statement (2) follows from the hypothesis that $W$ is malnormal and quasiconvex in $U$ by applying \cite[Theorem 7.11]{bowditch-relhyp-publish}, or \cite[Proposition 2.10]{mahan-relrig}. 
Statement (3) is evident from
the tree of spaces structure of $X_H$.
\end{proof}

{\em Continuation of the proof of Theorem \ref{prop-suffland}}:
We note that $\gamma'$ is of infinite diameter in $\widehat{X}_H$ by \cite[Theorem 2 (3)]{pranab-ravi-ct} and  converges
to a limit point of $\widehat{X}_1$ in $\pa \widehat{X}_H$. Then 
by \cite[Corollary 2.4]{kap-rafi} (see also \cite[Lemma 3.9]{pranab-ravi-ct})
there is a (uniform) quasigeodesic ray $\sigma$, say, in $\widehat{X}_H$ such that the Hausdorff
distance between $\sigma$ and $\gamma'$ in $\widehat{X}_H$ is finite.
Since the inclusion $\widehat{X}_1\to\widehat{X}_H$ is a qi embedding by Lemma \ref{ray ct lemma} $(3)$, without loss generality we may assume that $\sigma$ is a quasigeodesic in $\widehat{X}_1$ and that it is without back tracking.
Now, de-electrifying $\sigma$ in
$X_H$ gives a uniform quasigeodesic $\gamma''$, say, in $X_H$ asymptotic to
$\gamma'$ (see \cite[Lemma 2.15]{dahmani-mj-height}). Thus it will be enough to show that $\gamma''$ converges to a point of $\pa X_G$.

De-electrifying $\sigma$ in $X_1$ gives a uniform quasigeodesic $\beta$, 
say, in $X_1$ (see \cite[Lemma 2.15]{dahmani-mj-height}). As $(U,G)$ is a ray Cannon--Thurston pair, $\beta$ converges to a point $\xi'$, say, 
of $\pa X_G$. As both $\beta$, $\gamma''$ fellow travel  $\sigma$
outside the subspaces being coned off (see \cite[Lemma 2.13]{dahmani-mj-height}),
any unbounded sequence of points on $\gamma''\cap X_1$ converges to $\xi'$.
 Finally, if $s_n=u_nV$ is the sequence of vertices visited by
$\pi_H\circ \gamma''$ then clearly $d(1, u_nX_{12})\map \infty$ in $X_H$ 
as $n\map \infty$. Thus $d(1,\pi^{-1}_G(s_n))\map \infty$ in $X_G$. Hence, by Lemma \ref{set conv}, the sets $\pi^{-1}_G(s_n)$, which are uniformly quasiconvex in $X_G$, converge to a unique point of $\pa X_G$. It follows that  the connected components of $\gamma''$ contained in the  $V$-type vertex spaces converge to $\xi'$. 
\end{proof}

 As a consequence of Theorem \ref{prop-suffland}, we have the following.
 
 \begin{cor}\label{cor-suffland}
 	Let $G=A\ast_C B$ be a hyperbolic group, and $H=U\ast V$ be a hyperbolic subgroup  such that $A,B,C,U,V,W$ are hyperbolic, and the following hold.
 	\begin{enumerate}
    \item $U<A$, $V<B$,
 		\item $U \cap C = V\cap C =\{1\}$,
 		\item  $C$ is malnormal quasiconvex in $A$,
 		\item The pairs $(U,G)$ and $(V,G)$ admit ray Cannon--Thurston maps.
 	\end{enumerate}
 	Then $(H,G)$ admits a ray Cannon--Thurston map.
 \end{cor}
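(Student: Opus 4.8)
The plan is to deduce Corollary~\ref{cor-suffland} directly from Proposition~\ref{prop-suffland} by checking that the hypothesis ``$H = U \ast V$ respects the graph of groups decomposition of $G = A \ast_C B$'' together with condition (1), namely $U \cap C = V \cap C = \{1\}$, is precisely the statement that $H$ decomposes as $U \ast_W V$ with $W = \{1\}$. First I would observe that since $H$ respects the decomposition, by Definition~\ref{def-respect} we have $U \subset A$, $V \subset B$, and the edge group $W$ of the splitting of $H$ satisfies $W \subset C$; moreover, by the intersection property (Bass--Serre theory, \cite[Proposition 2.7, 2.15]{bass-cov}), $W = H \cap (\text{edge stabilizer of } G)$, which in the base case is $W = U \cap C$ (equivalently $V \cap C$, as these must agree for the tree of $H$ to embed in that of $G$). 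Condition (1) forces $W = \{1\}$, so the amalgamated product $U \ast_W V$ is literally the free product $U \ast V$; this is why the statement is phrased with $H = U \ast V$ rather than $H = U \ast_W V$.

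With this identification in hand, the corollary is an immediate application of Proposition~\ref{prop-suffland}: take the ``$W$'' of that proposition to be the trivial group $\{1\}$, which is vacuously hyperbolic and vacuously quasiconvex in everything. Condition (2) of the corollary (malnormal quasiconvexity of $C$ in $A$) is exactly hypothesis (1) of the proposition, and condition (3) of the corollary (ray CT maps for $(U,G)$ and $(V,G)$) is exactly hypothesis (2) of the proposition. The hyperbolicity hypotheses on $A, B, C, U, V$ carry over verbatim, and $W = \{1\}$ is trivially hyperbolic. Hence Proposition~\ref{prop-suffland} applies and yields that $(H,G)$ admits a ray CT map, which is the desired conclusion.

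There is essentially no analytic content to prove here — the work has already been done in Lemmas~\ref{lem-landingrayinfinitesuff} and \ref{lem-landingrayfinitesuff} and assembled in Proposition~\ref{prop-suffland}. The only point requiring a word of care, and the one I would expect a referee to probe, is the bookkeeping that ``$H = U \ast V$ respecting the decomposition with $U \cap C = V \cap C = \{1\}$'' genuinely gives a splitting $H = U \ast_{\{1\}} V$ to which the proposition applies, i.e.\ that normal forms $u_1 v_1 \cdots u_k v_k$ in $H$ remain in normal form in $G$ and that the $\TT$-infinite/$\TT$-finite dichotomy used in the proof of Proposition~\ref{prop-suffland} still makes sense. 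But this is guaranteed by clause (2) of Definition~\ref{def-respect}: the Bass--Serre tree of $H$ embeds into $\TT$, so the projection of any normal-form geodesic ray in $\Gamma_H$ to $\TT$ is a reparametrized geodesic, and the same case analysis (infinite projection versus eventually constant projection) goes through unchanged. So the proof is just: identify $W = \{1\}$, cite the intersection property, and invoke Proposition~\ref{prop-suffland}.

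\begin{proof}
Since $H = U \ast V$ respects the graph of groups decomposition of $G = A \ast_C B$, Definition~\ref{def-respect} gives $U \subset A$, $V \subset B$, and an embedding of the Bass--Serre tree of $H$ into the Bass--Serre tree $\TT$ of $G$. By the intersection property of Bass--Serre theory (\cite[Proposition 2.7, 2.15]{bass-cov}), the edge group of the induced splitting of $H$ is $U \cap C = V \cap C$, which by hypothesis (1) equals $\{1\}$. Thus $H = U \ast_W V$ with $W = \{1\}$, consistent with writing $H = U \ast V$.

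Now apply Proposition~\ref{prop-suffland} with the subgroup $W$ taken to be the trivial group. Hypothesis (1) of Proposition~\ref{prop-suffland} (that $C$ is malnormal quasiconvex in $A$) is hypothesis (2) of the present corollary, and hypothesis (2) of Proposition~\ref{prop-suffland} (that the pairs $(U,G)$ and $(V,G)$ admit ray CT maps) is hypothesis (3) of the present corollary. The groups $A, B, C, U, V$ are hyperbolic by assumption, and $W = \{1\}$ is trivially hyperbolic. Therefore Proposition~\ref{prop-suffland} applies and $(H,G)$ admits a ray CT map.
\end{proof}
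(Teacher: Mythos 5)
Your proof is correct and follows the same route as the paper: identify the edge group $W$ of the splitting of $H$ as trivial using hypothesis (1), and then invoke Proposition~\ref{prop-suffland}. Your version is slightly more detailed about why $W=\{1\}$ and why the normal-form/Bass--Serre bookkeeping goes through, but the argument is identical in substance.
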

 
 \begin{proof}
 First we note that the given free product 
 decomposition $H$  gives an immersed subgraph of subgroups of $G=A\ast_C B$: this follows from the hypothesis that $U \cap C = V\cap C =\{1\}$.
 Therefore, the corollary follows  from Theorem \ref{prop-suffland}.
 \end{proof}

\noindent{\bf Domain of continuity}\\
A closer look at the proof of the above theorem shows that we can say more about the ray Cannon--Thurston map obtained in Theorem \ref{prop-suffland}.

\begin{prop}\label{domain of cont prop}
Suppose $H<G$ are as in Theorem \ref{prop-suffland}. 
Suppose, moreover,  $(U,G), (V,G)$ are Cannon--Thurston pairs. Suppose $\gamma$ is
a quasigeodesic ray in $X_H$. Let $\gamma'$ be a quasigeodesic asymptotic to $\gamma$
as in Step 2 of the proof of Theorem \ref{prop-suffland}.
Then the ray Cannon--Thurston map is continuous at
$\gamma(\infty)$ in the following two cases:\\
$(1)$ Image of $\pi_H\circ\gamma'$ has infinite diameter, i.e. $\gamma'$ is of type $(3)$. \\
$(2)$ Image of $\pi_H\circ \gamma'$ is contained in the $1$-neighborhood of a vertex $t$
of $U$-type and no infinite subsegment of $\gamma'$ is contained 
in any $V$-type vertex space.
\end{prop}
\begin{proof}
 Let $i:X_H\map X_G$ denote the inclusion map and $\pa i_r: \pa X_H \map \pa X_G$
 denote a ray Cannon--Thurston map. Suppose $\gamma$ is a geodesic ray in $X_H$
 as in the statement of Proposition \ref{domain of cont prop}, and $\xi=\gamma(\infty)$.
 
 {\bf Claim:} {\em Suppose $\{x_n\}$ is any sequence of points of $X_H$
 converging to $\xi \in \pa X_H $. Then $\{x_n\}$ converges to $\pa i_r(\xi)$ in $X_G$. }
 
 We note that by Lemma \ref{ray ct cont criteria}, and the claim above, it follows that $\pa i_r$ is continuous at $\xi$. 

 {\em Proof of claim.}  Let $\gamma'$ be a quasigeodesic asymptotic to $\gamma$
as in Step 2 of the proof of Theorem \ref{prop-suffland}.

 {\bf Case 1.} Suppose $\pi_H(\gamma')$ has infinite diameter. 
 The proof of the claim in this case follows by the same arguments as in 
 Case $1$ of Step $3$
 of the proof of Theorem \ref{prop-suffland}: one needs to observe that
 for all $n\in \N$ there exists $N\in \N$ such that $x_m\in X_n$ for all $m\geq N$.

\noindent {\bf Case 2.}
Suppose $\gamma'$ is as in (2) of Proposition \ref{domain of cont prop}.
	Since $(U,G)$ is a Cannon--Thurston pair, if a subsequence of $\{x_n\}$ is contained
	in $X_1$ then it converges to $\pa i_r(\xi)$ as the tail of $\gm'$ is not contained in any $V$-type vertex space. Otherwise, assume that there is a subsequence of $\{x_n\}$ whose projections under $\pi_H$ are not $t$. To avoid clumsy notation, let us  denote the subsequence again by $\{x_n\}$. We will show that $\{x_n\}$ converges to $\pa i_r(\xi)$. Choose an unbounded sequence $\{r_n\}\sse\N$ such that $y_n=\gm'(r_n)\in X_1$. Let $v_n$ be the projection of $\pi_H(x_n)$ on $B(t;1)$. We note that $v_n$ is a $V$-type vertex of $T_H$. Let $x'_n\in X_1$ be the point on an $X_H$-geodesic $[y_n,x_n]$ for which $x'_n$ is the farthest point from $y_n$ (with respect to the metric on $X_H$) and suppose that an $X_H$-geodesic $[y_n,x'_n]$ is contained in $X_1$. Note that $\{y_n\}$, $\{x'_n\}$ and $\{x_n\}$ all converge to $\xi$ in $\pa X_H$. As $(U,G)$ is a Cannon--Thurston pair, $\{y_n\}$ and $\{x'_n\}$ converge to $\pa i_r(\xi)$ in $\pa X_G$. 
	 On the other hand,
	if $A_n$ is the connected component of $T_G\setminus \{t\}$ containing $v_n$
	then $\pi^{-1}_G(A_n)=Q_n$, say, is a uniformly quasiconvex set in $X_G$.  Fix  $x_0\in X_1$. Then,  
	$d_{X_G}(x_0, Q_n)\map \infty$ as $n\map \infty$ as $\xi$ is not
	a  limit point of any $V$-type vertex space in $X_H$. In particular, $d_{X_G}(x_0,[x'_n,x_n]_{X_G})\to\infty$ as $n\to\infty$ since $x_n\in Q_n$
	and $x'_n\in N_1(Q_n)$. Finally, as $\{x'_n\}$ converges in $\pa X_G$, $\{x_n\}$ also converges in $\pa X_G$ to the same limit as $\{x'_n\}$. This completes the proof.
\end{proof}
We conclude this section with the following remark, which explains that all the results in this section remain valid when the assumption of malnormality is replaced by that of almost malnormality.
\begin{remark}\label{rmk-malnormal to almal}
In the proof of Theorem \ref{prop-suffland}, the crucial place where the malnormality of $W$ in $U$ is used is in the proof of Lemma \ref{ray ct lemma}. There we have applied Lemma \ref{mat-oguni elaborate} to conclude that $V$ is (almost) malnormal in $H$. This is needed to apply \cite[Theorem $7.11$]{bowditch-relhyp-publish} to conclude that $X_H$ is hyperbolic relative to cosets corresponding to $V$. However, Bowditch's theorem (and Lemma \ref{mat-oguni elaborate}) are also valid when the corresponding subgroup is almost malnormal. Consequently, Theorem \ref{prop-suffland} continues to hold if $W$ is almost malnormal in $U$. 

Secondly, Proposition \ref{domain of cont prop} relies on Theorem \ref{prop-suffland} and on the fact that $B$ is quasiconvex in $G$. The latter fact is obtained by applying Lemma \ref{mat-oguni elaborate} to $G=A\ast_C B$. 
Therefore, Proposition \ref{domain of cont prop} remains valid if $C$ and $W$ 
are almost malnormal.
\end{remark}

\section{JKLO criterion and non-existence of Cannon--Thurston Maps}
\label{sec-crit}
 The term JKLO criterion below is named after Jeon, Kapovich, Leininger and Ohshika. The following can be derived from  their theorem \cite[Theorem A(2)]{JKLO}. We include a proof for  completeness.

 \begin{prop}\label{prop-jklo} Suppose that a hyperbolic group $G$  acts properly by isometries on a proper hyperbolic
 	metric space $X$. Let $\rho: G\map X$ be an orbit map $g\mapsto g.x$ where $x\in X$.
 	Suppose that there exist two sequences $\{g_n\},\{g'_n\}$ in $G$ such that
 	\begin{enumerate}
 	\item $\{g_n\},\{g'_n\}$ converge to two
 	distinct points of $\pa G$,
 	\item $\LMX g_nx=\LMX g'_n x=\xi\in\pa X$,
 	\item $\xi$ is a conical limit point of $G$.
 	\end{enumerate} 
 	 Then $\rho$ does not admit a Cannon--Thurston map.
 
 	In particular, the following is true: 
    Suppose $G<G'$ is a Cannon--Thurston pair of hyperbolic groups. Let $\pa i:\pa G\to\pa G'$ be the Cannon--Thurston map.
  Then for any conical limit
 	point $\xi\in \partial G'$ of $G$, $(\partial i)^{-1}(\xi)$
 	is a singleton set. 	
 \end{prop}
 
 \begin{proof} The second assertion clearly follows from the first; so we prove the first one only.
 	
 	Suppose $X$ is $\delta$-hyperbolic.
 	Suppose there is a Cannon--Thurston map $\partial \rho:\partial G\map \partial X$.
 	Let $\al$ be a geodesic ray in $X$ joining $x$ to $\xi$. Since $\xi$ is a conical limit point of $G$, there is an unbounded sequence $\{h_n\}$ in $G$ such that $d_X(h_nx,\al)\le R$ for some
 	$R>0$. After passing to a subsequence, if necessary, we assume that
 	$\lim^G_{n\ri\infty} h_n=\eta\in \partial G$. Then it follows that $\LMX h_nx=\pa \rho(\eta)=\xi$.
 	
 	
Since $\{g_n\}$ and $\{g'_n\}$ converge to two distinct points of $\pa G$, without loss of generality, we assume that $\lim^G_{n\to\infty}h_n\ne\lim^G_{n\to\infty}g_n$. 	As $\lim^X_{n\map\infty} g_nx=\xi$, if $\al_n$ is a geodesic joining $x$ and
 	$g_nx$ for all $n\in \N$ then (up to passing to a subsequence) $\al_n$'s converge
 	(uniformly on compact subsets) to a geodesic ray $\bt$, say,
 	joining $x$ to $\xi$. By Lemma \ref{basic bdry lemma}, $Hd(\al,\bt)\leq 2\delta$.
 	Hence, there is a subsequence of natural numbers $\{n_i\}$, and a
 	sequence of real numbers $\{r_i\}$ such that
 	$d_X(h_ix,\al_{n_i}(r_i))\le R+2\delta+1$ for all $i\in\N$.
 	As the $G$-action is by isometries on $X$, it follows that
 	the geodesic $h^{-1}_i\al_{n_i}$ in $X$ joining $h_i^{-1}x$ and $h_i^{-1}g_{n_i}x$
 	passes through the $(R+2\delta+1)$-radius ball centered at $x$.

 	On the other hand, as the sequences $\{h_n\}$
 	and $\{g_n\}$ are converging to different points in $\pa G$, it follows that
 	$d_G(h_m, [1,g_n])\map \infty$ as $m,n\map \infty$.
 	Hence, $d_G(h_i, [1,g_{n_i}])\map \infty$ as $i \map \infty$. Therefore,
 	$d_G(1, [h^{-1}_i,h^{-1}_ig_{n_i}])\map \infty$ as $n\map \infty$.
 	
 	This violates Lemma~\ref{mitra's criterion} contradicting  the assumption that
 	$\rho:G\map X$ admits a Cannon--Thurston map.
 \end{proof}
 
 A pair $(G,G')$ as in Proposition~\ref{prop-jklo} will be said to satisfy the  \emph{JKLO criterion}.
The aim of the authors in \cite{JKLO} was to furnish a property of conical limit points in the presence of a Cannon--Thurston map. Proposition~\ref{prop-jklo} turns this around into a criterion for the non-existence
of Cannon--Thurston maps.

As a direct consequence of the JKLO criterion we describe the following
general group theoretic situation where there are non Cannon--Thurston pairs. Instead of directly applying the JKLO criterion, this proposition is used later on to produce examples of non Cannon--Thurston pairs of hyperbolic groups.
\begin{prop}\label{prop-gpcriterion1}{\em (A group theoretic criterion)}
	Suppose $G_1<G$ are hyperbolic groups and $K, P$ are two infinite hyperbolic subgroups of $G_1$
	such that the following hold:
	\begin{enumerate}
		\item $P$ is quasiconvex in $G$. 
		\item $K$ is quasiconvex in $G_1$.
		\item $P\cap K$ is finite.
		\item There is a sequence $\{x_n\}$ in $K$ such that $\lim^K_{n\map \infty} x_n\in \partial K$
		and $\lim^G_{n\map \infty} x_n\in \Lambda_G(P)$.
	\end{enumerate}
	Then $(G_1, G)$ is not a Cannon--Thurston pair.
\end{prop}
\proof First, we note that $P$ is quasiconvex in $G_1$ since it is so in the larger group $G$.
Let $\xi=\lim^G_{n\map \infty} x_n$. As $P$ is quasiconvex in $G$, $\xi$ is a conical limit point of $P$ and hence it is a conical
limit point of $G_1$. Let $\gamma_1$ be a geodesic ray in $P$ starting from $1$ such that
$\partial i_{P,G}(\gamma_1(\infty))=\xi$. Let $\gamma_2$ be a geodesic ray in $K$ starting from
$1$ such that $\gamma_2(\infty)=\lim^K_{n\map \infty} x_n$. As $P, K$ are both
quasiconvex in $G_1$, so $\gamma_1, \gamma_2$ are both quasigeodesics in $G_1$. Since $P\cap K$ is finite, $\Lambda_{G_1} P \cap \Lambda_{G_1} K = \Lambda_{G_1} (P \cap K) = \emptyset$ \cite{short}.
Hence, $\gamma_1, \gamma_2$ converge to two different points of $\partial G_1$.
If $(G_1,G)$ is a Cannon--Thurston pair, then $\gamma_1(\infty), \gamma_2(\infty)\in \partial G_1$ would
be two distinct points in  $\partial i_{G_1,G}^{-1}(\xi)$.  This violates  the JKLO criterion (Proposition~\ref{prop-jklo}).
\qed

\section{Main technical theorem}\label{sec-main thm}

In this section we prove the main technical theorem of the paper.

\begin{theorem}\label{cor-gpcriterion1}{\em (Main technical theorem)}\label{main thm}
Let $H$ be a hyperbolic group. Let $F, K$ be two hyperbolic subgroups of $H$.
	Suppose, moreover, that the following hold.
\begin{enumerate}
\item $F$ is malnormal and quasiconvex in $H$.
\item $F\cap K=\{1\}$.

Further, suppose that $\phi:F\to F$ is a hyperbolic endomorphism of $F$. Let $G$ be the HNN extension $G=H\ast_{\phi}$ with stable letter $t$; and let
    $G_1$ be the subgroup of $G$ generated by $K\cup \{t\}$.

\item There is a sequence $\{y_n\}$ in $K$ converging to a point of $\pa K$ such that
		$\lim^G_{n\map\infty} y_n=\lim^G_{n\map\infty} t^n\in\pa G$.
	\end{enumerate}
    
	Then
	\begin{enumerate}
	\item $G_1, G$ are hyperbolic,
	\item $(G_1, G)$ is not a Cannon--Thurston pair.
	\item If a ray Cannon--Thurston map exists for the pair $(K,H)$ then we have a ray Cannon--Thurston map $\pa i_{r,G_1,G}:\pa G_1\to\pa G$.

\item The ray Cannon--Thurston map $\pa i_{r,G_1,G}:\pa G_1\to\pa G$ $($in $(3))$ is not continuous at the points in the $G_1$-orbit of $\lim^{G_1}_{n\to\infty}t^{-n}$  in $\pa G_1$.
    
    \item If, moreover, $(K,H)$ is a Cannon--Thurston pair, then a ray Cannon--Thurston map 
for the pair $(G_1, G)$ is continuous on the complement of the $G_1$-orbits in $\partial G_1$ of the points $\lim^{G_1}_{n\map \infty}t^{\pm n}$.
\end{enumerate} 
\end{theorem}
\begin{proof}
$(1)$ 
 Since $F\cap K=\{1\}$, it follows from Britton's lemma that $G_1=K*\langle t\rangle$. Hyperbolicity of $G_1$ is immediate from this. Let $L$ be the HNN extension
 $L=F\ast_{\phi}$. Then by definition $L$ is hyperbolic. On the other hand,
 we note that $G$ is isomorphic to the amalgamated free product
 $H\ast_F L$. Then the hyperbolicity of $G$  follows from  Lemma \ref{mat-oguni elaborate}.

\smallskip
$(2)$ That a Cannon--Thurston map does not exist for the pair $(G,G_1)$ follows from Proposition \ref{prop-gpcriterion1}.
Indeed, since $K$ and $\langle t\rangle=P$, say, are retracts of $G_1$, they
are both quasiconvex in $G_1$. Since $P$ is cyclic, it is
 quasiconvex in $G$ as well. Thus hypotheses (1), (2) of
Proposition \ref{prop-gpcriterion1} are satisfied. 
Hypothesis (3) of Proposition \ref{prop-gpcriterion1} is immediate, since $P\cap K=\{1\}$.
Hypothesis (4) of Proposition \ref{prop-gpcriterion1}
is an immediate consequence of hypothesis (3) of the present Theorem. Hence, by Proposition
\ref{prop-gpcriterion1} $(G_1,G)$ is  not a Cannon--Thurston pair.

\smallskip
$(3)$ This part of the theorem is deduced from Corollary \ref{cor-suffland} as follows.
As in the proof of $(1)$, we have $G=H\ast_F L$ and $G_1=K\ast P$. 
The only hypothesis of Corollary \ref{cor-suffland} that needs checking is that $(K,G)$ is a ray Cannon--Thurston pair.  By \cite[Corollary 3.11]{mitra-trees},  $(H,G)$ is a Cannon--Thurston pair. (Note that $G=H*_{\phi}$, and $\phi(F)$ is quasiconvex in $F$ by definition and hence quasiconvex in $H$.) Given that $(K,H)$ is a ray Cannon--Thurston pair, it immediately follows that $(K,G)$ is ray Cannon--Thurston pair.

\smallskip

$(4)$ As $K$ is qi embedded in $G_1$, $\{y_n\}$ converges to
a point $\eta\in \pa G_1$, say. We note that 
$\eta \neq \lim^{G_1}_{n\map \infty}t^{\pm n}
\in \pa G_1$. 
One way to see this is to observe that both $K$ and $\langle t\rangle$ are qi embedded
in $G_1$ and $K\cap \langle t\rangle=\{1\}$ by hypothesis. As $\eta\in \Lambda_{G_1}(K)$
we are done by the standard limit set intersection property of
quasiconvex subgroups of hyperbolic groups (\cite[Proposition 3]{short}).

Let $\eta_k=t^{-k}\eta$ for all $k\in \N$. 
Then we note that 
$\lim^{G_1}_{k\map \infty} \eta_k= \lim^{G_1}_{k\map \infty} t^{-k} \eta =
\lim^{G_1}_{n\map \infty}t^{-n}$, e.g. by \cite[8.1.G]{gromov-hypgps},
as $\eta\neq \lim^{G_1}_{n\map \infty}t^{\pm n}$.
But $\pa i_{r,G_1,G}(\eta_k)= \pa i_{r,G_1,G}(t^{-k}\eta)=
t^{-k}\pa i_{r,G_1,G}(\eta)=t^{-k} \lim^G_{n\map \infty} t^n=\lim^G_{n\map \infty} t^n$. 
As 
$\pa i_{r,G_1,G}(\lim^{G_1}_{n\map \infty}t^{\pm n})=\lim^{G}_{n\map \infty}t^{\pm n}$, and $\lim^{G}_{n\map \infty}t^{ n}\neq \lim^{G}_{n\map \infty}t^{- n}$,
it follows that $\pa i_{r,G_1,G}$ is not continuous at $\lim^{G}_{n\map \infty}t^{- n} \in \pa G_1$.\smallskip

$(5)$ Let $\pi_{G_1}:X_{G_1}\map T_{G_1}$
and $\pi_G:X_G\map T_G$ be the trees of graphs associated to 
the amalgamated free product decompositions $G=H\ast_F L$ and $G_1=K\ast P$
respectively (cf.\ Step 1 of the proof of Theorem
\ref{prop-suffland}). We note that in $X_{G_1}$ every vertex space is isometrically
embedded. Any geodesic ray $\gamma\subset X_{G_1}$, 
(after
possibly removing a finite subsegment)   satisfies one of the following conditions: \\
$(i)$ $\pi_{G_1}(\gamma)$ has infinite diameter.\\
$(ii)$ $\pi_{G_1}(\gamma)$ is a $K$-type point of $T_{G_1}$.\\
$(iii)$ $\pi_{G_1}(\gamma)$ is a $\langle t\rangle$-type point of $T_{G_1}$.

By Proposition \ref{domain of cont prop} if $\xi\in \pa X_{G_1}$ is
the limit point of a quasigeodesic ray of type $(i)$ or $(ii)$ then the
ray Cannon--Thurston map is continuous at $\xi$.  The collection of such points is precisely
the complement of the $G_1$-orbits of $\lim^{G_1}_{n\map \infty}t^{ \pm n}$.
This completes the proof of part (5) of Theorem \ref{cor-gpcriterion1}.
\end{proof}

The theorem above ensures  discontinuity  at  points in the $G_1$-orbit of $t^{-\infty}$. It also ensures continuity in the complement of the 
$G_1$-orbits of $t^{\pm\infty}$. However, we are unable to make such a definitive statement about 
 points in the $G_1$-orbit of $t^{\infty}$.
We shall see in the following section examples where Theorem \ref{cor-gpcriterion1}
will apply and where $\pa i_{r,G_1,G}$ will be discontinuous on the $G_1$-orbits
of  $\lim^{G_1}_{n\map \infty}t^{\pm n}$  in $\pa G_1$.

We comment on how to obtain all the conclusions of Theorem \ref{main thm} under the assumption that $F\cap K$ is finite and $F$ is almost malnormal in $H$.
\begin{remark}
Hyperbolicity of $G$ in Theorem \ref{cor-gpcriterion1} would follow by the same argument if we
    assume $F$ is almost malnormal instead of
    malnormal. Also we could
    replace the assumption $F\cap K=\{1\}$ by requiring that $F\cap K$ is finite and that the endomorphism $\phi$ restricted to $F\cap K$ is the identity map. In that case, we note that if $A=F\cap K$ then $\langle K,t\rangle=K\ast_A (\langle t\rangle\times A)$. Note that Theorem \ref{prop-suffland} and Proposition \ref{domain of cont prop} remain valid when the malnormality assumptions are replaced by almost malnormality (see Remark \ref{rmk-malnormal to almal}). Therefore, under the assumption that $F$ is almost malnormal and $F\cap K$ is finite, the only part that requires checking is the proof of part $(3)$ of Theorem \ref{cor-gpcriterion1}. In this case, one directly applies Theorem \ref{prop-suffland} instead of Corollary \ref{cor-suffland}
    to obtain the necessary conclusion. 
\end{remark}

\section{Applications}\label{sec-applns}
By Theorem \ref{cor-gpcriterion1} if we are able to find groups
$H, K,F$ and a hyperbolic endomorphism $\phi$ of $F$ satisfying the
hypotheses of Theorem \ref{cor-gpcriterion1} then we would have
a non-Cannon--Thurston pair. In this section, we exhibit three
classes of examples.

\subsection{Extensions of surface groups and free groups}\label{sec-normal}
\begin{theorem}\label{extn case}
Suppose $H$ is a hyperbolic group with hyperbolic subgroups $F, K$
such that the following hold:
\begin{enumerate}
\item $F$ is a malnormal quasiconvex free subgroup of $H$ of rank at least $3$.
\item $(K,H)$ is a Cannon--Thurston pair and $\Lambda_H(K)=\pa H$.
\item $F\cap K=(1)$.
\end{enumerate}

For  a hyperbolic automorphism $\phi:F\map F$, let
$L= F\rtimes_{\phi} \Z$, $G=H\ast_F L$ and $G_1=\langle K,t\rangle$
where $t$ is the generator of $\Z<L$ corresponding to $\phi$.
Then we have the following. 
\begin{enumerate}
    \item $G, G_1$ are hyperbolic, $G_1\simeq K \ast \langle t\rangle$.
    \item  $(G_1,G)$ is only a ray Cannon--Thurston pair.

    \item Lastly, $\pa i_r:\pa G_1 \map \pa G$ is discontinuous
    on the $G_1$-orbits of $\lim^{G_1}_{n\map \infty}t^{\pm n}$
    and continuous everywhere else on $\pa G_1$.
\end{enumerate}
\end{theorem}

\begin{proof} $(1)$, $(2):$
By Theorem \ref{cor-gpcriterion1}, we
only need to show the existence of a sequence $\{y_n\}$ in $K$
such that  $\lim^G_{n\map \infty} y_n= \lim^G_{n\map \infty} t^n$.
As $F$ is
a normal infinite subgroup of $L$, $\Lambda_L(F)=\pa L$. In particular,
there is a sequence $\{x_n\}$ (resp.\ a sequence $\{x'_n\}$)
  in $F$ such that $\lim^F_{n\map \infty} x_n\in \pa F$ (resp.
 $\lim^F_{n\map \infty} x'_n\in \pa F$) such that
$\lim^L_{n\map \infty} x_n=\lim^L_{n\map \infty} t^n$ 
(resp.\ $\lim^L_{n\map \infty} x'_n=\lim^L_{n\map \infty} t^{-n}$). 
As $L$ is qi embedded in $G$ by Lemma \ref{mat-oguni elaborate}, it follows that $\lim^G_{n\map \infty} x_n=\lim^G_{n\map \infty} t^n$
 (resp.\ $\lim^G_{n\map \infty} x'_n=\lim^G_{n\map \infty} t^{-n}$). 
 On the other hand, as $\Lambda_H(K)=\pa H$, we can find a 
sequence $\{y_n\}$ (resp. $\{y'_n\}$) in $K$ such that 
$\lim^H_{n\map \infty} y_n= \lim^H_{n\map \infty} x_n$
(resp.\ $\lim^H_{n\map \infty} y'_n= \lim^H_{n\map \infty} x'_n$).
Again as $G=H\ast_{\phi}$, where $H$ is hyperbolic and $F$ is quasiconvex
(and malnormal) in $H$,  it follows from \cite[Corollary 3.11]{mitra-trees}
that $(H,G)$ is Cannon--Thurston pair. Therefore,
$\lim^G_{n\map \infty} y_n= \lim^G_{n\map \infty} x_n=\lim^G_{n\map \infty} t^n$
(resp.\ $\lim^G_{n\map \infty} y'_n= \lim^G_{n\map \infty} x'_n=\lim^G_{n\map \infty} t^{-n}$).

$(3):$ By Theorem \ref{cor-gpcriterion1} $(5)$ the map
$\pa i_r$ is continuous on the complement of $G_1$-orbits of the points
$\lim^{G_1}_{n\map \infty}t^{\pm n}$, whereas by Theorem \ref{cor-gpcriterion1} $(4)$, $\pa i_r$ is discontinuous
on the $G_1$-orbit of $\lim^{G_1}_{n\map \infty}t^{- n}$.
However, replacing the sequence $\{y_n\}$ by $\{y'_n\}$ as defined above
and $\{t^n\}$ by $\{t^{-n}\}$ in Theorem \ref{cor-gpcriterion1} $(4)$
we see that $\pa i_r$ is discontinuous on the $G_1$-orbit of
$\lim^{G_1}_{n\map \infty}t^{n}$ as well. This completes the proof of $(3)$
and with it that of Theorem \ref{extn case}.
\end{proof}

    As a consequence of the above theorem we have the following.
    
\begin{theorem}\label{main application}
Suppose $H = K \rtimes \mathbb F$ where 
\begin{enumerate}
    \item Either $K=\mathbb F_n$ where $n\geq 3$ or $K=\pi_1(S)$
    where $S$ is a closed, orientable surface of genus at least 2.
    \item $H$ is hyperbolic.
    \item $\mathbb F=\mathbb F_m$, $m\geq 3$.
\end{enumerate}

Let $\phi: \mathbb F \map \mathbb F$ be a hyperbolic automorphism. Let $L=\mathbb F *_\phi$ and let $t$ denote the stable letter in $L$. Finally, let $G= H*_{\mathbb F} L$ and 
$G_1 = \langle K,t\rangle$. Then the following hold:
 
 \begin{enumerate}
 	\item $G$ is hyperbolic
    \item $G_1\simeq K\ast \langle t\rangle$ whence it is hyperbolic.
 	\item  The pair $(G_1,G)$ admits only a ray Cannon--Thurston map. 
    \item  $\pa i_r:\pa G_1 \map \pa G$ is discontinuous
    on the $G_1$-orbits of $\lim^{G_1}_{n\map \infty}t^{\pm n}$ and continuous
    everywhere else on $\pa G_1$.
 \end{enumerate}
\end{theorem}

\begin{proof}
We need to only verify conditions (1) and (2) of Theorem \ref{extn case}  
(conditions (3) and (4) are clear). Malnormality of $\mathbb F_m$ in $H$
follows from Lemma \ref{malnormal basic}. Quasiconvexity of $\mathbb F_m$ 
in $H$ is deduced as follows. We note that there is a natural group homomorphism
    $\pi: H\map \mathbb F_m$ such that $\pi|_{\mathbb F_m}$ is the identity
    map. It then follows that $\mathbb F_m$ is qi embedded in $H$, and
    hence quasiconvex in $H$. Thus (1) is verified. For (2), we note that
    $\Lambda_H(K)=\pa H$ as $K$ is an infinite normal subgroup of $H$. 
    Lastly, the existence of Cannon--Thurston map $\pa K \map \pa H$ follows from 
    \cite[Corollary 3.11]{mitra-ct}. 
\end{proof}
We note that for $K=\pi_1(S)$, existence of such a hyperbolic $H$ was proven by Mosher \cite{mosher-hbh}. For $K=\mathbb{F}_n$, $n\geq 3$ existence of such a hyperbolic $H$ was proven by Bestvina--Feighn--Handel \cite{BFH-lam}.

\begin{remark}\label{normal remk}
In Theorem \ref{main application}, assumption (2) can be replaced by
requiring that $K$ is merely a normal subgroup (i.e. perhaps not a free subgroup or
not isomorphic to a surface subgroup) of $H$ keeping the rest of the
hypotheses intact. 

More generally, one could take a general torsion-free $H$ where $K$ is a non-elementary hyperbolic normal subgroup  (whence it would follow that $H/K$ is a hyperbolic group
by \cite[Theorem A]{mosher-hypextns}). Assume also that $H/K$ is also non-elementary. 
Then, by a standard ping-pong argument and Theorem \ref{main malnormal thm}, one can construct a free, malnormal, quasiconvex subgroup of $H/K$ of rank at least $3$. Finally, Lemma \ref{malnormal basic} yields a free, malnormal, quasiconvex subgroup $\mathbb F$ of $H$. Then one applies Theorem \ref{cor-gpcriterion1}.
\end{remark}

\subsection{Constructions using commensurated subgroups}\label{sec-comm}
In this subsection we show another way in which one may construct triples
$(H,K,F)$ of hyperbolic groups satisfying hypotheses (1), (2), (3)
of Theorem \ref{extn case}. Since the proofs are standard we give
a sketch only. For  examples of commensurated hyperbolic subgroups $K$ of hyperbolic groups $H$ for which $K$ need not be normal in $H$, we refer the reader to \cite{min,ghosh-mj}. We recall that the {\em Cayley-Abels graph} $\mathcal G_{H/K}$ of the pair
$(H,K)$ with respect to a finite generating set $S$ of $H$ is the simplicial graph with vertex set $H/K$ and edge set
$\{(gK,gsK)| g\in H, s \in S\}$.
\begin{theorem}\label{main thm for comm}
	Suppose $K<H$ are non-elementary torsion-free hyperbolic groups such that
	$H=Comm_H(K)$,  and the (Gromov) boundary of the
	Cayley--Abels graph $\G_{H/K}$ contains at least $3$ points.
	Then the following hold:
    \begin{enumerate}
    \item 	$(K,H)$ is a Cannon--Thurston pair and $\Lambda_H(K)=\pa H$.
    \item   There is a malnormal quasiconvex free subgroup $F$ 
    of rank at least $3$ of $H$ such that $K\cap F=\{1\}$.

    Consequently, if 
	$\phi:F\map F$ is a hyperbolic automorphism then the following hold.
	\item 	The HNN extension $G=H*_{\phi}$ is hyperbolic.
	
	\item  The subgroup $G_1$ of $G$ generated by $K$ and the stable letter $t$ of
	the HNN extension is hyperbolic.
	
	\item  $(G_1,G)$ is only a ray Cannon--Thurston pair.

\item Finally, the ray Cannon--Thurston map $\pa i_r:\pa G_1\to\pa G$ is discontinuous on the $G_1$-orbits of $\lim^{G_1}_{n\to\infty}t^{\pm n}$ and continuous everywhere else on $\pa G_1$.
	
\end{enumerate}
\end{theorem}
{\em Sketch of proof:} 
 As $K$ is infinite,
it is a standard fact that $\Lambda_H(K)=\pa H$. It follows from
\cite[Proposition 5.12]{NirMaMj-commen} and \cite[Theorem 5.3]{pranab-mahan} that a Cannon--Thurston map
$\pa K \map \pa H$ exists. This completes checking hypothesis (2) of Theorem
\ref{extn case}.

The Cayley--Abels graph $\G_{H/K}$ for the pair $(H,K)$ 
 is a hyperbolic space by 
\cite[Corollary 5.13]{NirMaMj-commen}.  As $\pa \G_{H/K}$ has at least
three points by hypothesis, $\G_{H/K}$ is not 
quasi-isometric to a line and also, by \cite[Proposition 1.3 (3)]{NirMaMj-commen},
$H$ does not a fix a point in $\pa \G_{H/K}$.
Then there are elements  $h_1, h_2\in H$ which
act as independent loxodromics on $\G_{H/K}$ by
\cite[Theorem 3, Corollary 4]{woess-Fixed}.  By a
standard ping-pong argument $\langle h^n_1, h^n_2\rangle$ gives a free
quasiconvex subgroup $\mathbb F$ of $H$ for all large $n$.  We note that 
$K\cap F_1=(1)$. One may then construct a  finitely
generated free group $F<F_1$ of rank at least $3$ which is malnormal and quasiconvex in $H$, by
\cite{kapovich-nonqc}. Thus hypothesis (1)
of Theorem \ref{extn case} is verified.
Hypothesis (3) follows from a reprise of  the argument in the last paragraph of Theorem~\ref{extn case}. The conclusion follows.
\qed


\subsection{Examples built from HNN extensions of free groups}\label{sec-endo}
In this section we construct $H$ coming from 
 multiple ascending HNN extension of free groups.

Suppose $K=\langle S|R\rangle$ is any group and $\psi_i:K\map K$, $1\leq i \leq n$ are $n$
injective endomorphisms of $K$. Then the multiple
ascending HNN extension of $K$ with respect to these endomorphisms is the
group 
$$H:=\langle S\sqcup \{t_i:1\leq i\leq n\}|R\sqcup \{t_jxt^{-1}_j \psi_j(x)^{-1}, x\in S, 1\leq j\leq n\}\rangle.$$ We shall refer to the $t_i$'s as the \emph{stable letters} for the 
multiple HNN extension.
However, it is easy to see that $H$ is the fundamental group of a graph
of groups where 
\begin{enumerate}
    \item the underlying graph is a `rose with $n$ petals', i.e.\
it has one vertex $v$,  and $n$ oriented loops $e_1,\cdots, e_n$ based at $v$;
\item the vertex group and the edge groups are all $K$;
\item for any edge $e_i$, the two homomorphisms
$G_{e_i}=K\map G_{t(e_i)}=K$ and $G_{e_i}=K\map G_{o(e_i)}=K$ are
respectively $\psi_i$ and the identity.
\end{enumerate}
The lemma below follows immediately from this.
\begin{lemma}\label{lem-useful}
  Let $F=\langle t_1, \cdots, t_n\rangle <H$. Then  $F\simeq \mathbb F_n$
  and there is a group homomorphism $\psi:H\map F$ such that
  $\psi|_F$ is the identity map on $F$. In particular, 
  $H=F\ltimes Ker(\psi)$. Moreover, $Ker(\psi)$ is the normal
  closure of $K$ in $H$. 
\end{lemma}

Lemma \ref{lem-useful} has the following corollary using Lemma \ref{malnormal basic}.
\begin{cor}\label{endo cor}
    If $K=\mathbb F_m$ and $H$ is hyperbolic then the following hold:\\
    (1) $F$ is a malnormal, quasiconvex subgroups of $H$.
    (2) $K\cap F=(1)$.
\end{cor}

We give below the main theorem of this section demonstrating the role of a multiple
HNN extension.

\begin{theorem}\label{thm-endo} 
Suppose $K=\mathbb F_m$ is the free group on $m$ letters where $m\geq 2$.
Suppose $\psi_i$, $1\leq i\leq n$ are $n$ positive (hyperbolic)
endomorphisms of $K$ where $n\geq 3$ such that the resulting multiple
ascending HNN extension $H$ is hyperbolic.
Let  $t_1, \cdots, t_n$ be the corresponding stable letters and let 
$F=\langle t_1, \cdots, t_n\rangle$.

	Let $\phi:F\map F$ 
    be a positive, hyperbolic endomorphism of $F$. Let $G$ be the HNN extension given by $$G=H*_{F,\phi}=\langle H,t:t\,t_i\,t^{-1}=\phi(t_i),\, i=1\cdots, n\rangle.$$
	
	Then 
	\begin{enumerate}
\item	$G$ and $G_1=\langle K,t\rangle=K*\langle t\rangle$ are hyperbolic.
\item There is a sequence $\{y_n\}$ in $K$ such that
	$\lim^G_{n\map \infty}  y_n=\lim^G_{n\map \infty} t^n$.  

    Consequently, all the conclusions of Theorem \ref{main thm} hold;
    in particular,  $(G_1,G)$ admits only a ray Cannon--Thurston map.
	\end{enumerate}
\end{theorem}

We start with the following clarifying remark. Suppose $\mathbb F$ is a free group
on $k\geq 2$ letters. Suppose we fix a free basis $S=\{x_1, \cdots, x_k\}$ of
$\mathbb F$. Then an element $w\in \mathbb F$ is called {\em positive}
if $w$ is the product of only positive powers of $x_i$'s. Similarly
in the Cayley graph $\Gamma(\mathbb F, S)$ a geodesic ray $\gamma$ will be
called a {\em positive ray} if $\gamma(n)$, $n\in \N$ are all positive words of
$\mathbb F$. Finally an endomorphism $\phi:\mathbb F\map \mathbb F$ will
be called a {\em positive endomorphism} if $\phi(x_i)$, $1\leq i\leq k$
are all positive elements of $\mathbb F$. We emphasize that the notion of {\em positivity}
depends on a choice of free basis, e.g. in the above theorem we fix 
$\{t_1, \cdots, t_n\}$ as the free basis of $F$.
We start with the following observation for the proof of Theorem \ref{thm-endo}.

\begin{lemma}\label{t infinity0}
Let $P\simeq\mathbb F_m =\langle x_1, \cdots, x_m\rangle$
and $\phi_i:P\map P$, $1\leq i\leq n$ be positive hyperbolic endomorphisms
such that the resulting multiple ascending HNN extension $L$, say, is hyperbolic.
Let $s_1, \cdots, s_n$ be the stable letters and let $Q=\langle s_1, \cdots, s_n \rangle$. 
Then \\
(1) $(P,L)$ is a Cannon--Thurston pair.\\
(2) For any positive ray $\alpha$ in $Q$, there is a positive ray
 $\gamma$ in $P$ such that $\pa i_{P,L}(\gamma(\infty))=\lim^L_{n\map \infty} \gamma(n)=\lim^L_{n\map \infty} \alpha(n).$
\end{lemma}
\proof First, note that $(P,L)$ is a Cannon--Thurston pair by 
the main theorem of \cite{mitra-trees}. Secondly, note that compositions
of positive endomorphisms are positive endomorphisms. Now, let $x$ be one of the basis elements of $P$. As $L$ is hyperbolic, $\{\alpha(n)(x)\}$ is a
sequence of positive elements in $P$ whose lengths are going to $\infty$. 
It follows that $d_L(1, \alpha(n)(x))\map \infty$ as $n\to\infty$. On the other hand,
$Q$ is qi embedded in $L$ by Corollary \ref{endo cor} whence $\alpha$ is a 
quasigeodesic in $L$. Hence, by Lemma \ref{limit g^n}
$\lim^L_{n\map \infty} \alpha(n)=\lim^L_{n\map \infty} \alpha(n)(x)$. 
Finally, since the elements $\alpha(n)(x)$
are positive words in $P$,  we can extract a subsequence of 
$\alpha(n)(x)$ which converges to a positive ray $\gamma$ in $P$. 
It then follows that $\pa i_{P,L}(\gamma(\infty))=\lim^L_{n\map \infty} \gamma(n)=\lim^L_{n\map \infty} \alpha(n).$ \qed

\begin{proof}[\bf Proof of Theorem~\ref{thm-endo}:]
 By Corollary \ref{endo cor} $F$ is a malnormal quasiconvex subgroup of $H$ and
$F\cap K=(1)$. (1) follows from this and Corollary \ref{basic combination}.

The proof of (2) runs as follows.
Let $L=F\ast_{\phi}$. It follows that $G=H\ast_F L$ and hence $L$ is
quasiconvex in $G$ by Lemma \ref{mat-oguni elaborate}. Now, by Lemma
\ref{t infinity0} there is a positive geodesic ray $\gamma$, say, in $F$
such that $\lim^L_{n\map \infty} \gamma(n)= \lim^L_{n\map \infty} t^n$.
Hence, 
$${\lim}_{n\map \infty}^{G} \gamma(n)= {\lim}^G_{n\map \infty} t^n\,\, \hfill \ \ \ \ \ \ \ \ (*)$$ as
$L$ is quasiconvex in $G$. Once again, by Lemma \ref{t infinity0} 
applied to $H$ there is a positive geodesic ray $\alpha$, say, in $K$
such that $\lim^H_{n\map \infty} \gamma(n)= \lim^H_{n\map \infty} \alpha(n)$.
This implies that 
$${\lim}^G_{n\map \infty} \gamma(n)= {\lim}^G_{n\map \infty} \alpha(n)\,\, \hfill \ \ \ \ \ \ \ \ (**)$$
as $(H,G)$ is a Cannon--Thurston pair. The latter claim follows from the main
theorem of \cite{mitra-trees} as $G$ is an HNN extension of $H$ where the edge
group $F$ is quasiconvex in $H$. However, from $(*)$ and $(**)$ we have
$\lim^G_{n\map \infty} \alpha(n)= \lim^G_{n\map \infty} t^n$. This completes the
proof of (2).
\end{proof}

\begin{remark}\label{rmk-endo}
In the proof of Theorem~\ref{thm-endo}, the hypothesis that
the stable letters $t_i$'s correspond to positive hyperbolic endomorphisms $\psi_i$'s is used only in the proof of Lemma~\ref{t infinity0}, to conclude that 
$d(1,\al(n) (x)) \to \infty$ as $n \to \infty$. This can be relaxed immediately as follows. We can assume that all the $\psi_i$'s have train-track representatives with the same underlying graph. With this assumption, it follows that if the element $x$ in Lemma~\ref{t infinity0} is represented by an immersed loop $\sigma$, then so is $\Psi (\sigma)$ for any $\Psi$ corresponding to a positive word in $Q$. This allows the conclusion of Lemma~\ref{t infinity0}, and hence that of Theorem~\ref{thm-endo} to go through.
\end{remark}

\subsection{Revisiting the examples of Baker and Riley}
We quickly recall Baker--Riley's small cancellation example from \cite{baker-riley}.
Suppose $F(c_1,c_2)$ and $F(d_1,d_2)$ are two free groups of rank 2 with free bases $\{c_1,c_2\}$ and $\{d_1,d_2\}$ respectively. Suppose
\begin{eqnarray*}
	C&=&c_1c_2c_1c^2_2c_1c^3_2\cdots c_1c^r_2\\
	C_i&=&c_1c^{ri+1}_2c_1c^{ri+2}_2c_1c^{ri+3}\cdots c_1c^{ri+r}_2\\
	D_j&=&d_1d^{rj+1}_2d_1d^{rj+2}_2d_1d^{rj+3}\cdots d_1d^{rj+r}_2\\
	D_{ij}&=&d_1d^{r(il+j)+1}_2d_1d^{r(il+j)+2}_2d_1d^{r(il+j)+3}_2\cdots d_1d^{r(il+j)+r}_2
\end{eqnarray*}

where $r\ge17,~i,j\in\{1,2\}$. Let
\begin{eqnarray*}
	G_{cd}&:=&\langle c_1,c_2,d_1,d_2|c_id_jc^{-1}_i=D_{ij},~1\le i,j\le 2\rangle,\\
	G_{bcd}&:=&\langle G_{cd},b|bc_ib^{-1}=C_i,~1\le i\le 2\rangle\text{ and}\\
	G&:=&\langle G_{bcd},a|aba^{-1}=bC^{-1},~ad_ja^{-1}=bD_jb^{-1},~1\le j\le 2\rangle .\,
\end{eqnarray*}

\begin{theorem}\textup{(\cite[Theorem 1]{baker-riley})}\label{baker-riley thm}
	The groups defined above have the following properties:
	\begin{enumerate}
		\item $F(d_1,d_2)$ is a subgroup of $G_{cd}$.

		\item $F(c_1,c_2)$ is also a subgroup of $G_{cd}$ and $F(c_1,c_2)\cap F(d_1,d_2)=\{1\}$.

		\item $G_{bcd}$ is an HNN extension of $G_{cd}$ with stable letter $b$ and defining monomorphism $F(c_1,c_2)\map G_{bcd}$ sending $c_i\mapsto C_i$ for $i=1,2$.

		\item $H=\langle b,d_1,d_2\rangle< G_{bcd}$ is a free subgroup of rank $3$.

		\item $G$ is an HNN extension of $G_{bcd}$ with stable letter $a$.
	\end{enumerate}
Then, $(H,G)$ is not a Cannon--Thurston pair.
\end{theorem}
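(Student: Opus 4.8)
The plan is to recognize the Baker--Riley group $G_{bcd}$ at the end of their \emph{first} HNN step as precisely an instance of the multiple ascending HNN extension setup of Theorem~\ref{thm-endo}, and then to realize the final HNN step (adjunction of $a$) in the form $G = H' \ast_{Q,\phi}$ with $Q$ generated by a free malnormal quasiconvex subgroup of $H'$. Concretely, I would first set $K := F(c_1,c_2,b)$ or rather build the picture inside $G_{bcd}$: observe that $G_{cd}$ is a multiple ascending HNN extension of $F(d_1,d_2)$ with stable letters $c_1,c_2$ and positive defining endomorphisms $d_j \mapsto D_{ij}$, and that the words $D_{ij}$, $C_i$ are positive in the relevant generators. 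So $G_{bcd}$ fits the template ``hyperbolic multiple ascending HNN extension of a free group'' with vertex group a free group and positive hyperbolic endomorphisms; the small cancellation $C'(1/6)$ hypothesis with $r \geq 17$ is exactly what guarantees hyperbolicity (and quasiconvexity/malnormality of the various free subgroups), replacing the train-track input mentioned in the discussion preceding Definition~\ref{def-pos}.

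Next I would identify the data for the last step. In $G_{bcd}$, consider $H = \langle b, d_1, d_2\rangle$, which is free of rank $3$ by Theorem~\ref{baker-riley thm}(4), and consider $Q := \langle b, d_1, d_2 \rangle$ as the subgroup that will be ``HNN-extended'' by $a$ --- wait, more carefully: the relations $aba^{-1} = bC^{-1}$ and $ad_ja^{-1} = bD_jb^{-1}$ show that conjugation by $a$ sends the free subgroup $H = \langle b, d_1, d_2\rangle$ into $G_{bcd}$ via the map $\psi$ with $b \mapsto bC^{-1}$, $d_j \mapsto bD_jb^{-1}$. Checking that $\psi(H)$ is again a free subgroup of rank $3$ and that $\psi$ is a \emph{hyperbolic} endomorphism (via the small cancellation geometry, exactly as in Baker--Riley's analysis of why $G$ is hyperbolic) lets us write $G = G_{bcd} \ast_{H,\psi}$ as an ascending HNN extension of $G_{bcd}$ along $H$, with stable letter $a$. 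I would then invoke Lemma~\ref{basic combination} and Theorem~\ref{mat-oguni elaborate}: $H$ is malnormal (by Lemma~\ref{malnormal basic}, using that $G_{bcd}$ has no $\mathbb{Z}\oplus\mathbb{Z}$) and quasiconvex in $G_{bcd}$, so $G$ is hyperbolic, $L = \langle H, a\rangle$ is quasiconvex in $G$, and $\lim^G_{n\to\infty} a^n \in \Lambda_G(H)$.

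The heart of the argument is then to verify Criterion~\ref{crit2}/hypothesis (4) of Corollary~\ref{cor-gpcriterion1}: I need a sequence $\{y_n\}$ inside $K := \langle c_1, c_2 \rangle$ (the subgroup that will become the $K$-factor of $G_1 = K \ast \langle a \rangle$) converging to a point of $\partial K$ with $\lim^G y_n = \lim^G a^n$. For this I follow the scheme described after Criterion~\ref{crit} and in the proof of Theorem~\ref{thm-endo}: positivity of $\psi$ gives (via Lemma~\ref{t infinity0}) a positive infinite geodesic ray $\gamma$ in $H = \langle b, d_1, d_2\rangle$ with $\partial i_{H,L}(\gamma(\infty)) = \lim^L a^n$; quasiconvexity of $L$ in $G$ upgrades this to $\lim^G \gamma(n) = \lim^G a^n$; then Lemma~\ref{positive rays0} (applied with $\langle c_1,c_2\rangle$ in the role of $K$ and using that $C_i$, hence all the relevant defining words, are positive, so that $\gamma(n) c_i \gamma(n)^{-1}$ has length tending to infinity in $\langle c_1, c_2\rangle$ --- this uses $\langle c_1, c_2 \rangle \cap \langle d_1, d_2 \rangle = \{1\}$ from Theorem~\ref{baker-riley thm}(2)) produces the required $\{y_n\} \subset \langle c_1,c_2\rangle$ with $\lim^H y_n = \lim^H \gamma(n)$, hence $\lim^G y_n = \lim^G a^n$. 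The \emph{main obstacle} I anticipate is bookkeeping: making sure the subgroup generated by the relevant generators really is the free malnormal quasiconvex subgroup $Q$ required, and that all the defining words are ``positive'' in the precise sense needed for Lemma~\ref{positive rays0} --- in Baker--Riley's presentation the generators for $H$ are $b, d_1, d_2$ while the $y_n$ must lie in $\langle c_1, c_2\rangle$, so the positivity and malnormality claims have to be tracked through two layers of HNN extension simultaneously. Once Criterion~\ref{crit2} is verified, conclusions (1)--(3) of Corollary~\ref{cor-gpcriterion1} give that $G_1 = \langle c_1, c_2\rangle \ast \langle a\rangle$ and $G$ are hyperbolic and that $(G_1, G)$ is not a CT pair; restricting attention to the subgroup $H = \langle b, d_1, d_2\rangle$ and repeating the composition-of-maps argument (or invoking Matsuda--Oguni's observation that enlarging from $G_1$ to the slightly larger subgroup in their second step only adds a CT pair on top) then yields that $(H, G)$ in the sense of Theorem~\ref{baker-riley thm} is not a CT pair, recovering the Baker--Riley theorem from the first HNN step alone.
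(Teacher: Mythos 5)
Your proposal has two genuine gaps, both fatal as written. First, the final HNN step of the Baker--Riley construction is \emph{not} an ascending HNN extension of $G_{bcd}$ along $H=\langle b,d_1,d_2\rangle$: the relation $aba^{-1}=bC^{-1}$ sends $b$ to $bC^{-1}$, and $C\in F(c_1,c_2)$ does not lie in $\langle b,d_1,d_2\rangle$, so the map $\psi$ you write down does not carry $H$ into $H$. Consequently Lemma~\ref{basic combination} and Corollary~\ref{cor-gpcriterion1}, which require an endomorphism $\phi:F\to F$ of the edge subgroup itself, do not apply; nor does the positivity machinery (Lemma~\ref{t infinity0}, Lemma~\ref{positive rays0}), since $bC^{-1}$ is not a positive word. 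This is not bookkeeping --- the template of Theorem~\ref{thm-endo} simply does not fit the second HNN step. Second, and independently, even if your scheme went through it would prove that $(\langle c_1,c_2\rangle\ast\langle a\rangle,\,G)$ is not a CT pair, whereas the theorem concerns $(H,G)$ with $H=\langle b,d_1,d_2\rangle$. Neither of these subgroups contains the other, so your closing sentence (``restricting attention to $H$ and repeating the composition-of-maps argument'') has no content: non-existence of a CT map for one subgroup pair does not transfer to an unrelated subgroup. The Matsuda--Oguni style transfer you allude to requires a chain $G_1<H'<G$ with $(G_1,H')$ a CT pair, which is not the configuration here.

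The paper's actual proof is much more direct and bypasses Theorem~\ref{thm-endo} entirely for this step. It exhibits the explicit words $u_n=b^nc_1b^{-n}\in F(c_1,c_2)$ and $w_n=u_nd_1u_n^{-1}\in F(d_1,d_2)\subset H$; these are Dehn-reduced in the $C'(1/6)$ presentation, hence label geodesics in $\Gamma_G$ beginning with the segment $[1,b^n]$, so both $u_n$ and $w_n$ converge to $b^{\infty}\in\partial G$. On the other hand, inside the free group $H$ the elements $w_n$ converge to a point $\xi\in\partial F(d_1,d_2)\subset\partial H$ with $\xi\neq b^{\infty}$ in $\partial H$. Since $b\in H$, the point $b^{\infty}$ is a conical limit point of $H$ in $\partial G$, and the JKLO criterion (Proposition~\ref{prop-jklo}) applies directly to the pair $(H,G)$. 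If you want to salvage your approach, the lesson is that the structured machinery of Section~\ref{sec-endo} is used in the paper only for the \emph{first} step (Theorem~\ref{thm-corebr}, concerning $(H,G_{bcd})$), while the passage to the full group $G$ is handled by this bare-hands application of the JKLO criterion to explicit small-cancellation geodesics.
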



We shall first show the following instead. In \cite[Remark 8]{baker-riley}, the authors assert that `more elaborate versions' of their argument for Theorem~\ref{baker-riley thm} proves Theorem~\ref{thm-corebr}.
Since this follows in a straightforward way from Theorem~\ref{thm-endo}, and since no published proof exists, we include a proof below. The pair
$(H,G_{bcd})$ is clearly contained in the pair $(H,G)$. Thus,
Theorem~\ref{thm-corebr} directly establishes the non-existence of Cannon--Thurston maps in the simpler constituents examples
building up the example in Theorem~\ref{baker-riley thm} above.

\begin{theorem}\label{thm-corebr} With notation as in Theorem~\ref{baker-riley thm}, the pair $(H,G_{bcd})$ admits only a ray Cannon--Thurston map.
\end{theorem}

\begin{proof}
Note that $G_{cd}$ is a multiple ascending HNN extension of the free group $\langle d_1,d_2\rangle\simeq \mathbb F_2$
with stable letters $c_1,c_2$. Since the presentation is $C^\prime(1/6)$,
$G_{cd}$ is hyperbolic. Thus, in the setup of Theorem~\ref{thm-endo},
\begin{enumerate}
\item $\langle d_1,d_2\rangle\simeq \mathbb F_2$ takes the place of $K$.
\item $G_{cd}$ takes the place of $H$.
\item  $\langle c_1,c_2\rangle\simeq \mathbb F_2$  takes the place of $F$.
\item Since $G_{bcd}$  is an HNN extension of $G_{cd}$ using a
positive endomorphism on the free group $\langle c_1,c_2\rangle$,  $G_{bcd}$ takes
the place of $G$.
\item $H = \langle d_1,d_2, b\rangle$ takes the place of $G_1$.
\end{enumerate}
The theorem is now an immediate consequence of Theorem~\ref{thm-endo}.
\end{proof}

We now leverage the techniques of Theorem~\ref{thm-corebr} to prove Theorem~\ref{baker-riley thm}.

\begin{proof}[Proof of Theorem~\ref{baker-riley thm}:]
To recover Theorem~\ref{baker-riley thm} we consider the following words following notation as in the theorem. Let
\begin{enumerate}
\item $u_{n,i} = b^n c_i b^{-n}$, $i=1, 2$. Note that $u_{n,i} \in F(c_1, c_2)$ for all $n$ with length in $F(c_1, c_2)$ growing exponentially in $n$,
\item $w_{n,i,j} = u_{n,i} d_j u_{n,i}^{-1}$, $i=j, 2$. Note that
$w_{n,i,j}  \in F(d_1, d_2)$ for all $n$ with length in $F(d_1, d_2)$ growing doubly exponentially in $n$,
\item Set $$u_n=u_{n,1}=b^n c_1 b^{-n},$$ and $$w_n=w_{n,1,1}=
b^n c_1 b^{-n} d_1 b^n c_1^{-1} b^{-n}$$ for convenience of exposition.
\end{enumerate}

We then note that $u_n, w_n$ are Dehn-reduced words, and hence give word geodesics $[1,u_n], [1,w_n]$ in
$\Gamma_G$. Further, as $n \to \infty$, $u_n, w_n$, regarded as elements of $G$ converge to $b^\infty \in \partial G$, since they have initial geodesic segments of the form   $[1,b^n]$. Also, the Gromov inner product
$\langle b^\infty, w_n \rangle_1 \to \infty$ as $n\to \infty$ (since $\langle b^\infty, w_n \rangle_1 \geq n- \delta$, where $\delta$ is the hyperbolicity constant of $\Gamma_{G}$). Next, note that, as an element of $H$, $w_n \to \xi \in \partial F(d_1,d_2) \subset \partial H$. In particular, $\xi \neq b^\infty$. Observe that $b^\infty \in \partial G$ is necessarily a conical limit point of $H$. We can now apply the JKLO criterion Proposition~\ref{prop-jklo} to conclude that $(H,G) $ does not admit a Cannon--Thurston map.
\end{proof}

\subsection{Questions on a connection to holomorphic dynamics}\label{sec-cxdyn} The original motivation for the study of Cannon--Thurston maps
comes from the study of Kleinian groups acting on the Riemann sphere.
It is known \cite{mahan-annals,mahan-kl} that for finitely generated Kleinian groups, Cannon--Thurston maps  exist unconditionally and that connected limit sets  are locally connected. In particular, for any simply degenerate Kleinian surface group $G < PSl(2,\C)$, there is a single domain of discontinuity $D$ biholomorphic to the unit disk, such that
\begin{enumerate}
\item every radial ray in $D$, starting at the origin $o$ say, lands on the limit set $\Lambda_G \subset 
\hat{\C}$,
\item further, the landing point on $\Lambda_G$ depends continuously on the angle that a radial ray makes with a reference ray.
\end{enumerate}

Thus, there is no analog of the phenomenon described in this paper (non-existence of Cannon--Thurston maps) in the world of Kleinian groups. On the other hand  holomorphic dynamics on $\hat{\C}$ provides a more flexible framework. We start with a toy example.

\begin{example}\label{eg-sine}
Let $\mathbb G$ denote the closure of the topologist's sine curve, i.e.\ the closure of the set $\{(x, \sin 1/x): x \in (0,1]\}$ in the plane $\C$. Let $U = \hat{\C} \setminus \mathbb G$. Let $\D \to U$ denote the Riemann uniformizing map sending $0$ to $\infty$. Then it is easy to check that
the image in $U$ of every radial ray in $\D$ lands on $\mathbb G$. However,  the landing point does not depend continuously on the radial angle at $0 \in \D$. Else, $\mathbb G$ would be locally connected by the Caratheodory extension theorem. It is not.  
\end{example}

The dynamics of transcendental functions thus provide examples illustrating the analog of the existence of only ray Cannon--Thurston maps. In \cite{devaney-krych84,devaney-tangerman86}, Devaney and his collaborators show that  for certain transcendental
holomorphic functions, every ray lands, but the Julia set is not locally connected. In particular, the parametrization in terms of angle is 
\emph{not continuous}. 
In fact, for $f(z) = \lambda e^z$, with $\lambda < 1/e$, 
\cite[p. 50]{devaney-krych84},  the Julia set  is a Cantor bouquet of curves:  each
 curve is accumulated on by other curves. Further, the  tips 
 of these curves are accessible from infinity, so that all rays land.
 
To the best of our knowledge, the following analog for polynomial dynamics is unknown.
 
 \begin{question}\label{qn-cxdyna}
 Does there exist a polynomial function $f: \hat{\C} \to \hat{\C}$ with connected Julia set $J(f)$ such that
 the following hold.
 \begin{enumerate}
 \item Every ray from infinity lands on $J(f)$, but
 \item $J(f)$ is \emph{not locally connected}; equivalently, the landing point does \emph{not} depend continuously   on the radial angle (in the sense of Example~\ref{eg-sine}).
 \end{enumerate}
 \end{question}
 
 On the other hand, a host of examples of non-locally connected Julia sets exist for polynomials $f$.  For all the known examples, there exists some ray $r$ from infinity such that the set of accumulation points of $r$ on the Julia set $J(f)$ is a non-trivial compactum, i.e.\ it contains more than one point.
 
 In the light of Theorem~\ref{thm-omni}, the analogous phenomenon is
 unknown in the world of hyperbolic groups:
 
 \begin{question}\label{qn-rct}
 Does there exist a hyperbolic subgroup $G_1$ of a hyperbolic group $G$ and a geodesic ray $[1,\xi)_{G_1} \subset \Gamma_{G_1}$ such that the collection of accumulation points of $[1,\xi)_{G_1}$ in $\pa G$ consists of more than one point?
 \end{question}
 
 In the light of Theorem~\ref{thm-omni}, Question~\ref{qn-rct} refines
 Question~\ref{qn-main}. In short, as Questions~\ref{qn-cxdyna} and ~\ref{qn-rct} illustrate, such observed phenomena in rational holomorphic dynamics and hyperbolic groups are complementary in nature. The intersection of these two worlds may be regarded as the world of Kleinian groups, where neither of these exotic phenomena occur by the main theorems of \cite{mahan-annals,mahan-kl}. In a sense, therefore, the present paper introduces  a new line in the Sullivan dictionary--this time by broadening the Kleinian side to include the study of hyperbolic groups and their subgroups.

\appendix


\section{Other Examples and Constructions}\label{sec-others}



\subsection{On the Matsuda-Oguni construction }\label{sec-mo}
The forward direction of the theorem below is due to Matsuda and Oguni \cite{mats-oguni}. 	For the proof of the reverse implication, we need to use some standard notions and results from the theory of relatively
hyperbolic groups and spaces. For details we refer the reader to \cite{farb-relhyp} or \cite[Section 2.5]{dahmani-mj-height}.

    \begin{theorem}\label{matsuda thm}
Suppose $G=H_1*_KH_2$ is a free product with amalgamation such that $H_1$, $H_2$ are hyperbolic, and $K$ is infinite, almost malnormal and quasiconvex in $H_1$. (Hence $K$ is hyperbolic.) Then
    $(H_1,G)$ is a Cannon--Thurston pair if and only if $(K,H_2)$ is a Cannon--Thurston pair.
    \end{theorem}
    
    \begin{proof}
    \noindent First of all, we note that $G$ is hyperbolic by Lemma \ref{mat-oguni elaborate}.
    
    We first give a proof of the forward direction. This is due to Matsuda-Oguni \cite{mats-oguni} and we include it for completeness.	Since $K$ is quasiconvex in $H_1$ and the composition of Cannon--Thurston maps is a Cannon--Thurston map, we have that $(K,G)$ is a Cannon--Thurston pair.
	
	Now suppose that $(K,H_2)$ is not a Cannon--Thurston pair. Let $\{k_n\},\{k'_n\}\sse K$ be such that $\lim^K_{n\map\infty}k_n=\lim^K_{n\map\infty}k'_n\in\pa K$ but $\lim^{H_2}_{n\map\infty}k_n\ne\lim^{H_2}_{n\map\infty}k'_n$ in $\pa H_2$. By Lemma \ref{mat-oguni elaborate}, $H_2$ is quasiconvex in $G$, so we have  $\lim^G_{n\map\infty}k_n\ne\lim^G_{n\map\infty}k'_n$ in $\pa G$.
	On the other hand, $(K,G)$ is a Cannon--Thurston pair, so $\lim^G_{n\map\infty}k_n=\lim^G_{n\map\infty}k'_n\in\pa G$. This is a contradiction.

	Now we shall prove the reverse direction. Suppose that $(K,H_2)$ is a Cannon--Thurston pair. To show that $(H_1,G)$ is a Cannon--Thurston pair we use
Lemma~\ref{mitra's criterion}. For this we need to compare geodesics in $G$ and in $H_1$ joining 
	pairs of points of $H_1$.
	One way to do this is to look at the tree of spaces structure $\pi:X\map T$
	of $G$ given by the amalgamated free product decomposition of $G$ as $H_1 *_K H_2$ (see Subsection \ref{trees of spaces}). Let $Y$ be a vertex space corresponding to $H_1$. Fix $y_0\in Y$.
	We shall show that given $D\geq 0$ there exists $D'\geq 0$ such that for any $y,y'\in Y$, and
	\begin{enumerate}
	\item  geodesic $\al$ in $Y$ joining  $y,y'$,
	\item geodesic $\bt$ in $X$ joining  $y,y'$,
	\end{enumerate}
	 $d_X(y_0,\bt)\le D$ implies $d_Y(y_0,\al)\le D'$.

	From this tree of spaces, we deduce the following.
	\begin{enumerate}
		\item As $H_2$ is quasiconvex in $G$, by coning off various cosets of $H_2$ in $G$
		we get a hyperbolic graph say $\widehat{G}$. In this the diameter of the various cosets of
		$K$ is $2$. Thus if we cone off the cosets of $K$ too, then we get a quasi-isometric graph that we continue to denote by
		${\widehat{G}}$ (abusing notation slightly).
		\item Let us denote the graph obtained from $H_1$ by coning off the cosets of $K$ by $\widehat{H}_1$.
		Then $\widehat{H}_1$ is qi embedded in ${\widehat{G}}$.
	\end{enumerate}
	
	Now, given any $x,y\in H_1$, we can find a uniform quasigeodesic without back-tracking, say $\gamma$, in $\widehat{H}_1$
	 joining $x,y$. A de-electrification (see \cite[Section 2.5.2]{dahmani-mj-height} for instance)
	 of $\gamma$ in $H_1$ gives a uniform
	quasigeodesic, say $\alpha$, in $H_1$. By Lemma \ref{mat-oguni elaborate}, $H_2$ is quasiconvex, almost malnormal in $G$, so that $G$ is strongly hyperbolic relative to $H_2$ (\cite[Theorem 7.11]{bowditch-relhyp-publish}, \cite[Proposition 2.10]{mahan-relrig}). By a similar de-electrification of $\gamma$ in $G$, we  obtain a uniform
	quasigeodesic, say $\beta$, in $G$. Suppose $p\in \beta$ and $d_G(1, p)\leq D$. If $p\in \gamma\cap H_1$
	then $d_{H_1}(1, p)\leq D'$ for some $D'$ depending on $D$ as finitely generated subgroups of a
	finitely generated group are properly embedded. Otherwise, the point $p$ is contained in a segment
	$\beta_1$ of $\beta$ with end points $x_1, y_1\in H_1$ such that $\beta_1\subset x_1H_2$.
	In this case, one compares a geodesic joining $x_1, y_1$ in $x_1 K\subset H_1$ with $\beta_1$.
	As any geodesic in $G$ joining $1$ to $p$ must pass through $x_1K$, it follows that if  $p_1\in x_1 K$ is a point on such
	a geodesic then $d_G(1,p_1)\leq D$ and $d_G(p_1,p)\leq D$. Now, we use the fact that $(K,H_2)$ is a Cannon--Thurston pair and that $H_2$ is quasiconvex in $G$.
	Suppose $\eta$ is a proper function as in Lemma~\ref{mitra's criterion} for the pair $(K,H_2)$. (Note that the function $\eta$ is independent of the choice of base point as the  action of $K$ on the vertex set of its Cayley graph is transitive.)
	This means that if $\sigma$ is a geodesic in $x_1K$ joining $x_1, y_1$ then $d_{x_1K}(p_1, \sigma)\leq \eta(D)$.
	Thus $d_{H_1}(1, \alpha)\leq d_{H_1}(1, p_1)+ d_{H_1}(p_1, \sigma)$ is uniformly bounded in terms of $D$ and the quasiconvexity constant of 
	$H_2$  in $G$. Applying Lemma~\ref{mitra's criterion} we are through.
\end{proof}

In \cite{mats-oguni}, using the construction of Baker--Riley, Matsuda and Oguni showed that 
any non-elementary hyperbolic group $G_1$ can be embedded in a hyperbolic group
$G$ such that $(G_1,G)$ is not a Cannon--Thurston pair. We observe in this subsection that their construction goes through verbatim for many of the examples in this paper.

Let $K_0<G_0$ be hyperbolic groups such that
\begin{enumerate}
	\item $(K_0,G_0)$ is not a Cannon--Thurston pair, and
	\item  $K_0$ is a finitely generated
	free group rank $n\geq 2$.
\end{enumerate}  
Let $U$ be any non-elementary hyperbolic group. By Theorem
\ref{main malnormal thm}, there is a free subgroup $F<U$ of rank $n$ and a finite subgroup $A<U$
such that $\langle F, A\rangle\simeq F \times A$ is almost malnormal and quasiconvex in $U$. Let $W=U*_{\langle F\times A \simeq K_0\times A\rangle}G_0\times A$. Then, as a consequence of Theorem \ref{matsuda thm}, we have the following.

\begin{cor}\textup{(Matsuda-Oguni)}\label{mat-oguni exp}
	Given $U$, let $W$ be constructed as above. Then
	$W$ is a hyperbolic group and $(U,W)$ is not a Cannon--Thurston pair.
\end{cor}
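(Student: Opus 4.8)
The plan is to deduce Corollary~\ref{mat-oguni exp} directly from Theorem~\ref{mat-oguni elaborate}, exactly as Matsuda--Oguni do. The group $W = U *_{F\times A \simeq K_0\times A} (G_0 \times A)$ is an amalgamated free product whose edge group is $F\times A$, sitting inside the vertex group $U$. By construction, $F\times A$ is weakly malnormal and quasiconvex in $U$, and it is infinite (since $F\simeq \mathbb{F}_n$ with $n\geq 2$). Thus the hypotheses of Theorem~\ref{mat-oguni elaborate} are met with $H_1 = U$, $K = F\times A$, and $H_2 = G_0\times A$. (One should note in passing that $G_0 \times A$ is hyperbolic, being a finite extension of the hyperbolic group $G_0$, and likewise that $F\times A \simeq K_0 \times A$ under the chosen identification, so that the amalgam makes sense.) Part (1) of Theorem~\ref{mat-oguni elaborate} immediately gives that $W$ is hyperbolic, and that $H_2 = G_0\times A$ is weakly malnormal and quasiconvex in $W$.

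For the second assertion, I would apply part (2) of Theorem~\ref{mat-oguni elaborate}: $(U,W)$ is a CT pair if and only if $(F\times A,\, G_0\times A)$ is a CT pair. So it suffices to show that $(F\times A, G_0\times A)$ is \emph{not} a CT pair. This reduces to the hypothesis that $(K_0, G_0)$ is not a CT pair, via the identification $F\times A \simeq K_0 \times A$. The point is that a CT map for $(K_0\times A, G_0\times A)$ would restrict to a CT map for $(K_0, G_0)$: indeed, $\partial(K_0\times A) = \partial K_0$ and $\partial(G_0\times A) = \partial G_0$ since $A$ is finite, and the inclusion $K_0\times A \hookrightarrow G_0\times A$ is, up to the bounded error coming from the finite factor $A$, the inclusion $K_0\hookrightarrow G_0$; continuity of the extension to the boundary is unaffected by this bounded perturbation. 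Conversely a CT map for $(K_0,G_0)$ would give one for $(K_0\times A, G_0\times A)$ by the same reasoning, so the two properties are equivalent. Hence $(F\times A, G_0\times A)$ is not a CT pair, and therefore $(U,W)$ is not a CT pair.

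Finally, one applies this with $K_0 < G_0$ taken to be any of the free-subgroup examples produced earlier in the paper --- for instance the pair $(G_1, G)$ of Theorem~\ref{thm-endo} or Theorem~\ref{thm-corebr}, where $G_1$ is free of rank $\geq 2$ --- which certifies the existence of such a pair $(K_0, G_0)$ with $K_0$ free. This establishes the corollary for an arbitrary non-elementary hyperbolic group $U$.

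The main obstacle, such as it is, is purely bookkeeping: one must verify that the finite direct factor $A$ does not interfere with either the hypotheses of Theorem~\ref{mat-oguni elaborate} (weak malnormality and quasiconvexity of $F\times A$ in $U$, which is exactly what Theorem~\ref{main malnormal thm} delivers) or with the transfer of the ``not a CT pair'' property across the identification $F\times A\simeq K_0\times A$. Both are routine, since passing to a finite-index overgroup or adding a finite direct factor changes the relevant Cayley graphs only by a quasi-isometry and leaves Gromov boundaries unchanged. No genuinely new ideas beyond those already in Theorem~\ref{mat-oguni elaborate} are needed; the content of the corollary is that the earlier examples of non-CT pairs with free $K_0$ feed into the Matsuda--Oguni machine without modification.
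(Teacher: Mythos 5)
Your proposal is correct and follows exactly the route the paper intends: the corollary is stated there as an immediate consequence of Theorem~\ref{mat-oguni elaborate}, applied with $H_1=U$, edge group $F\times A\simeq K_0\times A$, and $H_2=G_0\times A$, with the finite factor $A$ handled by the quasi-isometry/finite-index argument you describe. No discrepancy with the paper's reasoning.
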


For the pair $K_0< G_0$, we can choose any of the following:
\begin{enumerate}
	\item Based on Theorem~\ref{main application}: In Theorem~\ref{main application}, let $K$ be free of rank $(n-1)$, so that the group $G_1$ occurring there
	is given as $G_1 = K * \langle t \rangle$, and is therefore a free group of rank $n$. Set $K_0=G_1$. Set $G_0$ to be equal to the group $G$ occurring in Theorem~\ref{main application} when $K$ is free of rank $(n-1)$.
	\item Based on Theorem~\ref{main thm for comm}: In Theorem ~\ref{main thm for comm} let the commensurated subgroup $K$ of $H$ be free of rank $(n-1)$. The rest of the construction is as in the previous case.
	\item Based on Theorem~\ref{thm-endo}: Start with $K$ free as in Theorem~\ref{thm-endo}. Assume that $K$ is free of rank $(n-1)$.
	The rest of the construction is as in the previous two cases.
\end{enumerate} 


\subsection{Pairs from highly distorted examples}\label{distorted exp}
$\newline$
\noindent{\em Background on Distortion.}
Let $H<G$ be finitely generated groups with finite generating sets $S$ and $T$ respectively.
We say that $H$ is {\em undistorted} in $G$ if $H$ is quasi-isometrically embedded in $G$. Else it is said to be a distorted
subgroup of $G$ \cite{gromov-ai}. Distortion is measured by the {\em distortion function} $Dist^G_H:\N\map\N$
defined as follows.
$$\text{Dist}^G_H(n):=\text{max}\{d_H(1,g)|g\in H \text{ with } d_G(1,g)\le n\}.$$
The function $\text{Dist}^G_H(n)$ a priori depends on the choice of the finite generating sets. However, upon change of the generating sets
$S$ and $T$ the distortion functions obtained are equivalent in the following sense:

{\em Two functions $f,g:\N\map\N$ are said to be equivalent, and we write $f\simeq g$,
	if there exists $C\in\N$ such that
	$f(n)\le Cg(Cn+C)+Cn+C$ and $g(n)\le Cf(Cn+C)+Cn+C$ for all $n\in \mathbb N$.}

We note that $H<G$ is undistorted if and only if the corresponding distortion functions are linear. Some of the most sophisticated collections of distorted hyperbolic subgroups of hyperbolic groups appear in 
\cite{brady-riley-hydra} and recent work of  Dani-Riley \cite{dani-riley-fracdis}).
The following result of theirs is philosophically related to Lemma~\ref{mat-oguni elaborate}.

\begin{theorem}\textup{(\cite[Theorem 16.2]{dani-riley-fracdis})}\label{distortion amalgam}
	Suppose $G=A*_CB$ is an amalgamation of finitely generated groups. Let $f:\N\map\N$ be a function such that $n\le f(n)$ for all $n\in\N$. Further, suppose $C$ is qi embedded in $A$ and Dist$^B_C\simeq f$. Then Dist$^G_A\simeq f$.
\end{theorem}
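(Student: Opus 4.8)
Write $f\preceq g$ to mean ``$f(n)\le Cg(Cn+C)+Cn+C$ for some constant $C$'', so that $f\simeq g$ is $f\preceq g$ together with $g\preceq f$; the goal is to prove $\operatorname{Dist}^G_A\preceq f$ and $\operatorname{Dist}^G_A\succeq f$. Fix finite generating sets $S_A$ of $A$, $S_B$ of $B$, chosen compatibly so that a generating set $S_C$ of $C$ lies in $S_A\cap S_B$; then $S:=S_A\cup S_B$ generates $G$ and $\Gamma_A,\Gamma_B,\Gamma_C$ embed in $\Gamma_G$. I will use two harmless normalisations: a distortion function is automatically non-decreasing, and up to $\simeq$ may be taken superadditive, so that $\sum_i f(a_i)\le f\bigl(\sum_i a_i\bigr)$; and, since $C$ is qi-embedded in $A$, there is $\kappa$ with $\tfrac1\kappa d_C(1,c)-\kappa\le d_A(1,c)\le\kappa d_C(1,c)+\kappa$ for all $c\in C$, hence also $d_B(1,c)\le d_C(1,c)\le\kappa d_A(1,c)+\kappa$. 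For the \emph{lower bound}: since $A$ is infinite, $\operatorname{Dist}^G_A\succeq(n\mapsto n)$ always; if in addition $f\not\simeq(n\mapsto n)$ then $\operatorname{Dist}^B_C\simeq f$ is super-linear, so for each $n$ we may pick $c_n\in C$ with $d_B(1,c_n)\le n$ and $d_C(1,c_n)\succeq f(n)$, whence $d_G(1,c_n)\le d_B(1,c_n)\le n$ while $d_A(1,c_n)\succeq d_C(1,c_n)\succeq f(n)$ by qi-embeddedness. Using $f(n)\ge n$, these two cases give $\operatorname{Dist}^G_A\succeq f$.

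\textbf{The upper bound: set-up.} Let $g\in A$ with $d_G(1,g)=d\le n$ and fix an $S$-geodesic $w=s_1\cdots s_d$ representing $g$. Mark each position $0\le i\le d$ for which the prefix $s_1\cdots s_i$ represents an element of $A$; positions $0$ and $d$ are marked. Cutting $w$ at the marked positions writes $g=\beta_1\cdots\beta_k$ in $A$, where each $\beta_j$ is a subword representing an element of $A$ with no nontrivial proper prefix representing an element of $A$, and $\sum_j|\beta_j|=d$. The combinatorial heart of the proof is the claim: \emph{every $\beta_j$ of length $\ge 2$ represents an element of $C$, and every maximal $S_A$-syllable occurring inside such a $\beta_j$ also represents an element of $C$}. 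One proves this by inspecting the $A\ast_C B$-normal form of $\beta_j$, equivalently the excursion traced by $\beta_j$ in the Bass--Serre tree of $G$: being a geodesic subword with no proper prefix and (by the same argument applied to suffixes) no proper suffix in $A$, such a $\beta_j$ must begin and end with $S_B$-letters, and a short normal-form analysis then forces the stated conclusion.

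\textbf{The upper bound: estimate.} Granting the claim, fix $\beta=\beta_j$ with $|\beta|\ge 2$. Each maximal $S_A$-syllable $v$ in $\beta$ lies in $C$, hence can be rewritten as an $S_B$-word of length $\le\kappa|v|+\kappa$ since $d_B(1,v)\le\kappa d_A(1,v)+\kappa\le\kappa|v|+\kappa$; performing all these rewritings turns $\beta$ into an $S_B$-word of length $\le 2\kappa|\beta|+\kappa$, so $d_B(1,\beta)\preceq|\beta|$. As $\beta\in C$ and $\operatorname{Dist}^B_C\simeq f$,
\[
d_C(1,\beta)\le\operatorname{Dist}^B_C\bigl(d_B(1,\beta)\bigr)\preceq f(|\beta|),\qquad d_A(1,\beta)\preceq d_C(1,\beta)\preceq f(|\beta|).
\]
Each $\beta_j$ of length $1$ contributes a bounded amount to $d_A(1,\cdot)$, and there are $k\le d$ blocks; so by the triangle inequality in $A$, superadditivity of $f$, and $\sum_j|\beta_j|=d\le n$,
\[
d_A(1,g)\le\sum_j d_A(1,\beta_j)\preceq d+f(d)\preceq f(n).
\]
Hence $\operatorname{Dist}^G_A\preceq f$, and together with the lower bound $\operatorname{Dist}^G_A\simeq f$.

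\textbf{The main obstacle.} The delicate point — and the reason the hypotheses are exactly ``$C$ qi-embedded in $A$'' together with ``$\operatorname{Dist}^B_C\simeq f$'' — is to pay the distortion $f$ \emph{only once}. A naive induction that straightens the $B$-excursions of $w$ one innermost backtrack at a time feeds material already straightened, and already inflated by $f$, back into the next $\operatorname{Dist}^B_C$-estimate, producing an iterated tower $f\circ f\circ\cdots$ whose height is the nesting depth of the excursions (up to $\sim n$), far larger than $f(n)$. The block decomposition above defeats this by the asymmetry of the hypotheses: every block lands inside $C$; the only place $f$ is charged is the single inclusion $C\hookrightarrow B$; and re-entering $A$ along $C\hookrightarrow A$ is free because that inclusion is a qi-embedding — so every copy of $f$ sits at the bottom level and is \emph{summed}, never composed. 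Making the structural claim about the blocks precise (this is where Bass--Serre / normal-form combinatorics is used in earnest) and verifying the superadditive bookkeeping are the two remaining technical tasks.
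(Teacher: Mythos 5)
First, note that the paper does not prove this statement: it is quoted verbatim from Dani--Riley \cite[Theorem 16.2]{dani-riley-fracdis}, so there is no proof in the paper to compare yours against. I can only assess your argument on its own terms.

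Your lower bound is fine, and the first half of your structural claim is correct and provable exactly as you indicate: the no-proper-prefix/suffix conditions force each block $\beta_j$ of length $\ge 2$ to begin and end with $S_B$-letters representing elements of $B\setminus C$, and the normal-form reduction of $\beta_j$ can then only terminate in a single $B$-type factor, whence $\beta_j\in A\cap B=C$. The gap is in the second half of the claim — that every maximal $S_A$-syllable inside such a $\beta_j$ lies in $C$ — which you defer as a ``remaining technical task'' but which is in fact the entire content of the theorem, and which is false in general. The prefix/suffix conditions do not exclude nested configurations such as $\beta_j=v_1u_1v_2u_2v_3$ with $u_1,u_2\in A\setminus C$, $v_2\in C$ and $u_1v_2u_2\in C$: every proper prefix/suffix still reduces to the question of whether $v_1$ or $v_3$ lies in $A$, so all your conditions are met. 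Nor does geodesity exclude them. Concretely, take $C=F(x,y)$, $A=C\rtimes_\alpha\langle a\rangle$ with $\alpha$ the (length-preserving, finite-order) swap $x\leftrightarrow y$, so $C$ is qi embedded in $A$, and $B=C\rtimes_\phi\langle t\rangle$ with $\phi$ an exponentially growing automorphism, so $\mathrm{Dist}^B_C\simeq f$ with $f$ exponential. The element $\phi(\alpha(\phi^n(x)))\in C\subset A$ is represented by the word $t\,a\,t^nxt^{-n}\,a^{-1}\,t^{-1}$ of length $2n+5$, which is (essentially) geodesic in $G$ because the cheap recipes for this element of $C$ must interleave $\phi$-conjugations (via $t$) with the $\alpha$-twist (via $a$); this word is a single block in your decomposition, and its maximal $S_A$-syllables $a^{\pm1}$ do not lie in $C$. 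Geometrically, your claim amounts to asserting that geodesics representing elements of $A$ never penetrate to depth greater than $2$ in the Bass--Serre tree, and there is no reason for that.

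Without the syllable claim, the step ``$d_B(1,\beta_j)\preceq|\beta_j|$'' is unavailable, and what remains needed is precisely that $d_C(1,\beta_j)\preceq f(|\beta_j|)$ for $\beta_j\in C$ with $|\beta_j|=d_G(1,\beta_j)$, i.e.\ that $C$ is no more distorted in $G$ than it is in $B$. That statement is essentially equivalent to the theorem, so the proposal at this point reduces the theorem to itself; and the naive recursion through nested excursions (pass from $d_B$ to $d_C$ via $f$ on the $B$-side, from $d_A$ to $d_C$ linearly on the $A$-side, and recurse on sub-blocks) produces exactly the iterated composition $f\circ f\circ\cdots$ that you correctly identify as the main obstacle. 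Your diagnosis of the danger is right; the proposed mechanism for defeating it is not. A correct proof has to control nested excursions directly — for instance by flattening the whole nested expression and bounding $d_A$ of the result by subadditivity before ever converting to $d_C$, together with a genuine argument that each innermost application of $f$ is charged against disjoint subwords of the geodesic — and this is where the real work of \cite[Theorem 16.2]{dani-riley-fracdis} lies.
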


Theorem \ref{distortion amalgam} and Corollary \ref{mat-oguni exp} give the following immediate application.
\begin{prop}\label{given distortion}
	Suppose $F<H$ are non-elementary hyperbolic groups, and $(F,H)$ is not a Cannon--Thurston pair with $\text{Dist}^H_F\simeq f$ for some function $f:\N\map\N$. Moreover, suppose $F$ is a free group of finite rank. Then given any non-elementary hyperbolic group $G_1$ there is a hyperbolic group $G_1<G$ such that $\text{Dist}^{G}_{G_1}\simeq f$ and $(G_1,G)$ is not a Cannon--Thurston pair.
\end{prop}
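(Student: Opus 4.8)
The plan is to combine the Matsuda--Oguni construction of Corollary~\ref{mat-oguni exp} with the amalgam distortion formula of Dani--Riley (Theorem~\ref{distortion amalgam}). Two preliminary normalizations: first, up to the equivalence $\simeq$ we may assume $n\le f(n)$ for all $n$ (replace $f$ by $n\mapsto\max\{f(n),n\}$, which lies in the same class), so that Theorem~\ref{distortion amalgam} will apply; second, $F$ has rank at least $2$, since a free group of rank $\le 1$ is quasiconvex, hence undistorted, in a hyperbolic group, contradicting that $(F,H)$ is not a CT pair. (For the same reason $F$ is genuinely distorted in $H$, so $f$ is in fact super-linear, though we do not need this.)

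Next I would carry out the Matsuda--Oguni construction of Corollary~\ref{mat-oguni exp} with input non-CT pair $K_0=F<G_0=H$ (here $K_0$ is free of finite rank $\ge 2$, as required) and ambient non-elementary hyperbolic group $U=G_1$. Explicitly, Theorem~\ref{main malnormal thm}, applied to a quasiconvex free subgroup of rank $\ge 2$ sitting inside $G_1$, produces a free subgroup $F_1<G_1$ of the same rank as $F$ together with a finite subgroup $A<G_1$ such that $F_1\times A$ is weakly malnormal and quasiconvex in $G_1$; set
\[
G \;=\; G_1 \ast_{\,F_1\times A\,\simeq\,F\times A\,}\bigl(H\times A\bigr),
\]
the amalgam over the isomorphism carrying $F_1$ onto $F$ and $A$ identically onto $A$. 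Then $G_1$ sits in $G$ as a vertex group, and Corollary~\ref{mat-oguni exp} (equivalently, Theorem~\ref{mat-oguni elaborate}) tells us that $G$ is hyperbolic and that $(G_1,G)$ is not a CT pair.

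It remains to compute the distortion of $G_1$ in $G$. View $G=G_1\ast_{C}B$ with $B=H\times A$ and $C=F_1\times A$. Then $C$ is qi embedded in the vertex group $G_1$ because $F_1\times A$ is quasiconvex there; and since $A$ is finite the word metric on $H\times A$ (resp.\ on $F\times A$) is a bounded thickening of that on $H$ (resp.\ on $F$), so $\text{Dist}^{B}_{C}=\text{Dist}^{H\times A}_{F\times A}\simeq\text{Dist}^{H}_{F}\simeq f$. Invoking Theorem~\ref{distortion amalgam} (whose remaining hypothesis $n\le f(n)$ was arranged above) gives $\text{Dist}^{G}_{G_1}\simeq f$, which together with the previous paragraph completes the proof.

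I do not foresee a genuine obstacle: the proposition is an assembly of two results already in hand. The only points needing (routine) care are the normalization $n\le f(n)$ and the observation that adjoining the finite direct factor $A$ changes neither the failure of the CT property (handled by Corollary~\ref{mat-oguni exp} directly) nor the distortion function up to $\simeq$ — both because $A$ is finite.
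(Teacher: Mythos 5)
Your proof is correct and is exactly the assembly the paper intends: the paper gives no separate argument, stating only that the proposition follows immediately from Corollary~\ref{mat-oguni exp} (applied with $K_0=F<G_0=H$ and $U=G_1$) together with Theorem~\ref{distortion amalgam}. Your additional normalizations (arranging $n\le f(n)$ up to $\simeq$, and noting that the finite factor $A$ affects neither the distortion class nor the CT failure) are the right routine checks; note also that the rank~$\ge 2$ condition on $F$ is already forced by the hypothesis that $F$ is non-elementary.
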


In \cite[pp. 159]{mitra-trees} examples of free subgroups of hyperbolic groups were constructed exhibiting an (at least) iterated 
exponential distortion function. See also \cite{bbd-cat-1disto} where it was shown that the lower bound established in \cite[pp. 159]{mitra-trees} is, in fact, the distortion function for many examples.
We recall the construction quickly.

\begin{example}\cite[pp. 159]{mitra-trees}\label{eg-highdist}
	Let $H$ be a hyperbolic group of the form $P\rtimes F$ where $P$ and $F$ are free group of rank $\ge3$. Such  examples may be found in \cite{BFH-lam}. Note that $P$ is normal and $F$ is malnormal in $H$ (see Lemma~\ref{malnormal basic}). 
	
	Consider $n$ distinct copies of $H$, say $H_1,H_2,\cdots,H_n$. Let $F_{i,1}$ and $F_{i,2}$ denote  respectively the normal subgroup and malnormal subgroup in $H_i$. Suppose $$A_n=H_1*_{K_1}H_2*_{K_2}H_3*\cdots*_{K_{n-1}}*H_n$$ where $K_i$ is identified with $F_{i,2}$ in $H_i$ and with $F_{(i+1),1}$ in $H_{i+1}$.
	
	By repeated application of the Bestvina--Feighn combination theorem  \cite{BF}, we conclude that $A_j < A_n$ are hyperbolic for all $j\le n$. Then  $F_{1,1}$ has distortion in $A_n$ (at least) an iterated exponential of height $n$. (As remarked above, it was shown in \cite{bbd-cat-1disto} that the lower bound established in \cite[pp. 159]{mitra-trees} is, in fact, the distortion function  for many examples.)
\end{example}

Based on these examples, we shall construct below, for any $n\in\N$, examples of pairs of hyperbolic groups $F<H$ such that the pair $(F,H)$ admits only a ray Cannon--Thurston  map and the distortion function $Dist^H_{F}$ is an iterated exponential of height $(n+1)$.
These examples are only a placeholder illustrating the fact that armed with the \emph{techniques} of this paper 
(as opposed to its results), it is relatively easy to come up with interesting examples.


\begin{theorem}\label{distortion result}
Given $n\in\N$ and a non-elementary hyperbolic group $G_1$, we can embed $G_1$ in another hyperbolic group $G$ such that: 

$(1)$ the distortion function $Dist^{G}_{G_1}$ is exponential of height $(n+1)$, and 

$(2)$ $(G_1,G)$ is only a ray Cannon--Thurston pair.
\end{theorem}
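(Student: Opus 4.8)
\textbf{Proof proposal for Theorem~\ref{distortion result}.}

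The plan is to reduce Theorem~\ref{distortion result} to the two ingredients already assembled: the construction of a pair $F<H$ of hyperbolic groups with $F$ free of finite rank, $(F,H)$ not a CT pair, and $\mathrm{Dist}^H_F$ an iterated exponential of height $n$; and Proposition~\ref{given distortion}, which says exactly that such an input pair $(F,H)$ can be amalgamated into any target non-elementary hyperbolic group $G_1$ to produce $G_1<G$ with $\mathrm{Dist}^G_{G_1}\simeq \mathrm{Dist}^H_F$ and $(G_1,G)$ not a CT pair. So the entire content of the theorem is: \emph{produce, for each $n$, a non-CT pair $F<H$ with $F$ free and distortion an $n$-fold iterated exponential}. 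This is the promised Section~\ref{subsub-highdisto} construction, which I would now carry out.

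First I would recall Example~\ref{eg-highdist}: take $n$ copies $H_1,\dots,H_n$ of a hyperbolic group $H\cong P\rtimes F$ with $P,F$ free of rank $\ge 3$ (which exists by \cite{BFH-lam}), where in each $H_i$ we have the normal free subgroup $F_{i,1}$ and the malnormal quasiconvex free subgroup $F_{i,2}$ (malnormality by Lemma~\ref{malnormal basic}). Form the iterated amalgam $A_n = H_1 *_{K_1} H_2 *_{K_2}\cdots *_{K_{n-1}} H_n$, identifying $K_i$ with $F_{i,2}<H_i$ and with $F_{i+1,1}<H_{i+1}$. By repeated application of the Bestvina--Feighn combination theorem \cite{BF} (or Theorem~\ref{mat-oguni elaborate}(1), using malnormality and quasiconvexity of the amalgamating free subgroups), each $A_j<A_n$ is hyperbolic. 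As noted in Example~\ref{eg-highdist}, $\mathrm{Dist}^{A_n}_{F_{1,1}}$ is (at least, and by \cite{bbd-cat-1disto} exactly) an iterated exponential of height $n$; so I would set $H := A_n$ and $F := F_{1,1}$, a free group of finite rank. The only new thing to verify beyond Example~\ref{eg-highdist} is that $(F,H)=(F_{1,1},A_n)$ is \emph{not} a CT pair. I would prove this by applying Theorem~\ref{mat-oguni elaborate}(2) inductively along the amalgam: writing $A_n = A_{n-1} *_{K_{n-1}} H_n$ with $K_{n-1}$ infinite, weakly malnormal and quasiconvex in $H_n$ (it equals $F_{n,1}$, which is free hence hyperbolic, and weakly malnormal), Theorem~\ref{mat-oguni elaborate}(2) gives that $(H_n, A_n)$ is a CT pair iff $(K_{n-1}, A_{n-1})$ is a CT pair. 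But $(K_{n-1},A_{n-1}) = (F_{n-1,2}, A_{n-1})$, and since $F_{n-1,2}$ is quasiconvex in $H_{n-1}$ and composition of CT maps is a CT map, a CT map for $(H_{n-1},A_{n-1})$ would force a CT map for $(F_{n-1,2},A_{n-1})$; conversely—and this is the point—$F_{n-1,1}$ is normal in $H_{n-1}$, so by Theorem~\ref{thm-mitra-ct} there is a \emph{surjective} CT map $\partial F_{n-1,1}\to\partial H_{n-1}$, whence the limit set of $F_{n-1,1}$ inside $\partial A_{n-1}$ contains $\Lambda_{A_{n-1}}(F_{n-1,2})$; running the JKLO criterion (Proposition~\ref{prop-jklo}) exactly as in the proof scheme of Theorem~\ref{normal subgp case} shows $(F_{n-1,1}, A_{n-1})$ — and hence, pulling back, $(F_{1,1},A_{n-1})$ by induction — fails to be a CT pair, and tracing the amalgam up shows $(F_{1,1}, A_n)$ fails to be a CT pair.

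With $(F,H)=(F_{1,1},A_n)$ in hand — hyperbolic, $F$ free of finite rank, not a CT pair, with $\mathrm{Dist}^H_F\simeq$ an $n$-fold iterated exponential $f$ — I would simply invoke Proposition~\ref{given distortion} with this $f$ and the given target $G_1$: it yields a hyperbolic group $G_1<G$ with $\mathrm{Dist}^G_{G_1}\simeq f$ and $(G_1,G)$ not a CT pair, which is the assertion of Theorem~\ref{distortion result}. (The mechanism inside Proposition~\ref{given distortion} is Corollary~\ref{mat-oguni exp}/Theorem~\ref{mat-oguni elaborate} for the non-CT conclusion, plus Theorem~\ref{distortion amalgam} of Dani--Riley for the distortion bookkeeping, so no further work is needed there.)

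The main obstacle I anticipate is the induction establishing that $(F_{1,1}, A_n)$ is not a CT pair: one must set up the nested surjective CT maps (from normality, Theorem~\ref{thm-mitra-ct}) and conical-limit-point arguments (Proposition~\ref{prop-jklo}) carefully so that at each amalgamation stage the "bad pair" $(\xi,\eta)$ of boundary points producing a non-singleton preimage survives into $\partial A_n$ — this requires quasiconvexity of each $H_i$ and of each amalgamating subgroup in the relevant bigger group, which is exactly what Theorem~\ref{mat-oguni elaborate}(1) provides, so the argument should close, but the bookkeeping of limit sets across $n$ amalgamation steps is where care is required. Everything else (hyperbolicity, freeness of $F$, the distortion estimate) is quoted verbatim from Example~\ref{eg-highdist}, \cite{bbd-cat-1disto}, and Proposition~\ref{given distortion}.
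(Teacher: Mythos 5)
There is a fatal gap: your claim that $(F_{1,1},A_n)$ is not a CT pair is false, and the paper proves the opposite. Since $F_{j,1}$ is \emph{normal} in $H_j$, the pair $(K_{j-1},H_j)=(F_{j,1},H_j)$ is a CT pair by Theorem~\ref{thm-mitra-ct}, so Theorem~\ref{mat-oguni elaborate}(2) gives that each $(A_{j-1},A_j)$ is a CT pair; composing these with the CT map for $(F_{1,1},H_1)$ (again normality) shows $(F_{1,1},A_n)$ \emph{is} a CT pair. Your attempted induction misuses the surjectivity of $\partial i_{F_{j,1},H_j}$: surjectivity of a \emph{continuous} boundary extension does not produce two distinct points of $\partial F_{1,1}$ identified over a conical limit point of $\partial A_n$, which is what the JKLO criterion (Proposition~\ref{prop-jklo}) requires. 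A composition of continuous CT maps is continuous, so no non-CT phenomenon can be extracted from the amalgam $A_n$ alone. Consequently the pair $(F,H)=(F_{1,1},A_n)$ you feed into Proposition~\ref{given distortion} does not satisfy its hypotheses, and your proof collapses.

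The missing ingredient is the final HNN extension step carried out in Section~\ref{subsub-highdisto}: one takes a hyperbolic automorphism $\phi$ of the malnormal quasiconvex free subgroup $F_{n,2}<A_n$, forms $G=A_n*_{F_{n,2},\phi}$, and sets $G_1=\langle F_{1,1},t\rangle=F_{1,1}*\langle t\rangle$ (still free of finite rank, so Proposition~\ref{given distortion} applies). The non-CT conclusion then comes from Corollary~\ref{cor-gpcriterion1}, whose hypothesis (4) is verified by the Claim of Section~\ref{subsub-highdisto}, namely $\partial F_{n,2}\subseteq \partial i_{F_{1,1},A_n}(\partial F_{1,1})$; this is proved by chaining the surjective CT maps $\partial F_{j,1}\to\partial H_j$ back through the amalgam --- i.e., the surjectivity you invoke is used to show that $t^{\infty}$ is a limit of a sequence from $F_{1,1}$, not to destroy the CT map for $(F_{1,1},A_n)$. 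Your reduction to Proposition~\ref{given distortion} and your recollection of Example~\ref{eg-highdist} are fine, but without the HNN step and the Claim the theorem is not proved.
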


\begin{proof}
We shall prove $(2)$ using Theorem \ref{cor-gpcriterion1}, while $(1)$ follows from the constructions of the groups (Examples \ref{eg-highdist}). Thus we shall construct groups satisfying conditions (1) and (2) of Theorem \ref{cor-gpcriterion1}, and show a limit condition (see Claim \ref{claim1} below) which ensures condition (3) of Theorem \ref{cor-gpcriterion1} under an appropriate HNN extension as required in Theorem \ref{cor-gpcriterion1}.

We now start with the collection of groups $A_n$	 described in  Example~\ref{eg-highdist}.
By repeated application of Lemmas \ref{mat-oguni elaborate} and \ref{composition mal}, we note that $H_j$ is quasiconvex in $A_j$ and malnormal in $A_j$. 
Again $(K_j,H_{j+1})$ is a Cannon--Thurston pair by \cite{mitra-ct}. So by Theorem \ref{matsuda thm}, $(H_1,A_2)$, $(A_2,A_3),\cdots,(A_{j-1},A_j)$ are Cannon--Thurston pairs. Hence $(H_1,A_j)$ is a Cannon--Thurston pair since a composition of Cannon--Thurston maps is a Cannon--Thurston map. Since $(F_{1,1},H_1)$ is a Cannon--Thurston pair \cite{mitra-ct}, it follows that $(F_{1,1},A_j)$ is a Cannon--Thurston pair. 

Again $H_n$ and $F_{n,2}$ are malnormal, quasiconvex in $A_n$ and $H_n$ respectively. So, $F_{n,2}$ is also malnormal and quasiconvex in $A_n$.
Finally, we need the following.

\begin{claim}\label{claim1}
	$\pa F_{n,2}\sse \pa i_{F_{1,1},A_n}(\pa F_{1,1})\sse\pa A_n$.
\end{claim}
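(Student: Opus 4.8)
The plan is to reformulate the Claim in terms of limit sets and then establish it by a short telescoping argument. First I would observe that, since $(F_{1,1},A_n)$ is a CT pair, the image $\partial i_{F_{1,1},A_n}(\partial F_{1,1})$ is exactly the limit set $\Lambda_{A_n}(F_{1,1})$: a boundary point of $F_{1,1}$ is the limit of a sequence of (distinct) group elements, the CT map carries such a sequence to a limit point of the $F_{1,1}$-orbit in $\partial A_n$ by continuity, and conversely any sequence in $F_{1,1}$ accumulating in $\partial A_n$ subconverges in $\partial F_{1,1}$. Likewise, since $F_{n,2}$ is quasiconvex in $A_n$ (already noted above), $\partial F_{n,2}$ is canonically identified with $\Lambda_{A_n}(F_{n,2})\subseteq\partial A_n$. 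So the Claim is equivalent to the inclusion $\Lambda_{A_n}(F_{n,2})\subseteq\Lambda_{A_n}(F_{1,1})$.

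The one technical ingredient I would isolate is a functoriality statement for limit sets under CT maps: if $B\le C\le A_n$ and $(C,A_n)$ is a CT pair, then $\Lambda_{A_n}(B)=\partial i_{C,A_n}(\Lambda_C(B))$. This is elementary --- a sequence in $B$ converging to a point of $\partial A_n$ consists of distinct group elements, hence leaves every ball of $C$ (balls in a finitely generated group are finite) and so subconverges in $\partial C$; continuity of $\partial i_{C,A_n}$ then gives both inclusions. Combining this with the fact (Theorem~\ref{thm-mitra-ct}, \cite{mitra-ct}) that the Cannon--Thurston map of a non-elementary normal hyperbolic subgroup is \emph{surjective} yields, for each $k$, that $\Lambda_{H_k}(F_{k,1})=\partial H_k\supseteq\Lambda_{H_k}(F_{k,2})$; pushing forward by the CT map $\partial i_{H_k,A_n}$ (which exists as a composition of the already-established CT pairs $(H_k,A_k),(A_k,A_{k+1}),\dots,(A_{n-1},A_n)$) gives
$$\Lambda_{A_n}(F_{k,2})\ \subseteq\ \Lambda_{A_n}(F_{k,1})\qquad\text{for all }k.$$

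Finally I would invoke the identifications built into the amalgam $A_n=H_1\ast_{K_1}\cdots\ast_{K_{n-1}}H_n$: the edge group $K_{k-1}$ is identified with $F_{k-1,2}\le H_{k-1}$ on one side and with $F_{k,1}\le H_k$ on the other, so $F_{k,1}$ and $F_{k-1,2}$ are literally the same subgroup of $A_n$, whence $\Lambda_{A_n}(F_{k,1})=\Lambda_{A_n}(F_{k-1,2})$ for $2\le k\le n$. Telescoping,
$$\Lambda_{A_n}(F_{n,2})\subseteq\Lambda_{A_n}(F_{n,1})=\Lambda_{A_n}(F_{n-1,2})\subseteq\cdots=\Lambda_{A_n}(F_{1,2})\subseteq\Lambda_{A_n}(F_{1,1}),$$
which is precisely the desired inclusion. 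I do not expect a serious obstacle here: the part demanding care is the bookkeeping of the successive subgroup identifications inside $A_n$ together with the functoriality lemma, while the genuinely essential input is the surjectivity of Cannon--Thurston maps for normal hyperbolic subgroups.
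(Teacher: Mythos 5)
Your proof is correct and follows essentially the same route as the paper: both arguments telescope down the chain $F_{n,2}=K_{n-1},\ F_{n,1}=F_{(n-1),2},\dots$ using surjectivity of the Cannon--Thurston map for the normal subgroup $F_{k,1}\lhd H_k$ at each stage, together with the composed CT maps $(H_k,A_k),(A_k,A_{k+1}),\dots,(A_{n-1},A_n)$. The only difference is presentational --- the paper chases sequences and their limits step by step, while you package the same inputs into a limit-set functoriality lemma and a chain of inclusions --- and your version is, if anything, slightly cleaner.
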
 

\begin{proof}
	
Suppose $\{x_l\}\sse F_{n,2}$ such that $\lim^{A_n}_{l\map\infty}x_l=\lim^{F_{n,2}}_{l\map\infty}x_l\in\pa F_{n,2}$. Since $\pa i_{F_{n,1},H_n}:\pa F_{n,1}\map\pa H_n$ is surjective by \cite{mitra-ct} and $F_{n,2}$ is quasiconvex in $H_n$, there exists a sequence $\{y_l\}\sse F_{n,1}=F_{(n-1),2}$ such that $\lim^{H_n}_{l\map\infty}y_l=\lim^{H_n}_{l\map\infty}x_l\in\pa H_n$. Again, since $H_n$ is quasiconvex in $A_n$, we have $\lim^{A_n}_{l\map\infty}y_l=\lim^{A_n}_{l\map\infty}x_l\in\pa A_n$.
	
	Note that $F_{(n-1),2}$ and $H_{n-1}$ are quasiconvex in $A_{n-1}$. Then by the same argument as above, we  have a sequence $\{y'_l\}\sse F_{(n-2),2}$ such that $\lim^{A_{n-1}}_{l\map\infty}y'_l=\lim^{A_{n-1}}_{l\map\infty}y_l\in\pa A_{n-1}$. Again, since $(A_{n-1},A_n)$ is a Cannon--Thurston pair,  $\lim^{A_n}_{l\map\infty}y'_l=\lim^{A_n}_{l\map\infty}y_l=\lim^{A_n}_{l\map\infty}x_l\in\pa A_n$.
	Then by repeated application of the above argument, we obtain a sequence $\{z_n\}$ in $F_{1,2}$ such that $\lim^{A_n}_{l\map\infty}z_l=\lim^{A_n}_{l\map\infty}x_l\in\pa A_n$.
	
	Observe that $F_{1,2}$ is quasiconvex in $H_1$ and the Cannon--Thurston map $\pa i_{F_{1,1},H_1}:\pa F_{1,1}\map H_1$ is surjective by \cite{mitra-ct}. After passing to a subsequence, if necessary, we have $\{z'_l\}\sse F_{1,1}$ such that $\lim^{H_1}_{l\map\infty}z'_l=\lim^{H_1}_{l\map\infty}z_l\in\pa H_1$. Again the Cannon--Thurston map $\pa i_{H_1,A_n}:\pa H_1\map\pa A_n$ ensures that $\lim^{A_n}_{l\map\infty}z'_l=\lim^{A_n}_{l\map\infty}z_l\in\pa A_n$. Therefore, we have a sequence $\{z'_l\}\sse F_{1,1}$ such that $\lim^{F_{1,1}}_{l\map\infty}z'_l\in\pa F_{1,1}$ and $\lim^{A_n}_{l\map\infty}z'_l=\lim^{A_n}_{l\map\infty}x_l\in\pa A_n$. This completes the proof of claim.
\end{proof}

Finally, let $\phi:F_{n,2}\map F_{n,2}$ be a hyperbolic automorphism,  so that the HNN extension $G=A_n*_{F_{n,2},\phi}$ is hyperbolic (using the Bestvina--Feighn combination theorem \cite{BF}). Let $G_1$ be the subgroup of $G$ generated by $F_{1,1}$ and $\{t\}$ where $t$ is the stable letter of the HNN extension. 

Let $L=F_{n,2}\rtimes_{\phi}\mathbb Z$. Then $G=A_n*_{F_{n,2}}L$. Note that $(A_n,G)$ is a Cannon--Thurston pair by Theorem \ref{matsuda thm} as $(F_{n,2},L)$ is a Cannon--Thurston pair (\cite{mitra-ct}). Thus by Claim \ref{claim1}, we have a sequence $\{p_l\}\sse F_{1,1}$ such that $\lim^G_{l\to\infty}p_l=t^{\infty}$ as $\pa F_{n,2}\to\pa L$ is surjective by \cite{mitra-ct}. This shows condition $(3)$ of Theorem \ref{cor-gpcriterion1}. Also note that $F_{1,1}\cap F_{n,2}=\{1\}$ by repeated application of \cite[Lemma 1]{miller-britt-thm}.
Conclusion $(2)$ of the theorem follows by Theorem \ref{cor-gpcriterion1}.

To conclude $(1)$, we note that $F_{n,2}$ is undistorted in $A_n$ and distorted exponentially in $L$. Hence by Theorem \ref{distortion amalgam}, $A_n$ is exponentially distorted in $G$, and hence, $F_{1,1}$ in $G$ has exponential distortion of height $(n+1)$. Since $G_1=F_{1,1}*\langle t\rangle$, it follows that $G_1$ has  exponential distortion of height $(n+1)$.
\end{proof}

\newcommand{\etalchar}[1]{$^{#1}$}

\end{document}